\documentclass[twoside,11pt]{article}

\usepackage[latin1]{inputenc}
\usepackage[T1]{fontenc}
\usepackage[english]{babel}
\usepackage{color}
\usepackage{tocloft}
\usepackage{verbatim}

\usepackage{tikz}
\usetikzlibrary{snakes}
\usetikzlibrary{arrows}
\usepackage{tikz-cd}

\usepackage[
  hmarginratio={1:1},     
  vmarginratio={1:1},     
  textwidth=17cm,        
  textheight=24cm,
  heightrounded          
]{geometry}

\usepackage{amsmath,amsthm,amssymb}
\usepackage{mathrsfs, mathtools}
\usepackage{dsfont}

\allowdisplaybreaks

\usepackage[all]{xy} 
\SelectTips{cm}{11} 
\usepackage[hidelinks]{hyperref}

\usepackage[numbers,sort&compress]{natbib}
\makeatletter
\renewcommand{\@biblabel}[1]{[#1]\hfill}
\makeatother
\setlength{\bibsep}{0.2cm}

\usepackage[mathscr]{euscript}
\usepackage{calrsfs}
\DeclareMathAlphabet{\pazocal}{OMS}{zplm}{m}{n}

\usepackage[shortlabels]{enumitem}

\theoremstyle{plain}
\newtheorem{theorem}[equation]{Theorem}

\newtheorem{corollary}[equation]{Corollary}

\newtheorem{lemma}[equation]{Lemma}
\newtheorem{proposition}[equation]{Proposition}

\theoremstyle{definition}
\newtheorem{definition}[equation]{Definition}
\newtheorem{example}[equation]{Example}
\newtheorem{remark}[equation]{Remark}
\newtheorem{construction}[equation]{Construction}

\numberwithin{equation}{section}
\setcounter{secnumdepth}{2}
\setcounter{tocdepth}{2}

\makeatletter
\binoppenalty=\maxdimen
\relpenalty=\maxdimen
\@beginparpenalty=10000
\newlength{\@listleftmargin}
\settowidth{\@listleftmargin}{(iii) }
\setlength{\parindent}{\@listleftmargin}
\setlist{leftmargin=\@listleftmargin,itemsep=0pt,topsep=0pt,partopsep=0pt,parsep=\parskip}
\setlist[enumerate]{align=left,labelsep=*,leftmargin=\@listleftmargin,itemsep=0pt,topsep=0pt,partopsep=0pt,parsep=\parskip}
\makeatother


\newcommand{\Triv}{\mathrm{Triv}}
\newcommand{\Ham}{{\mathrm{Ham}}}
\newcommand{\UnSh}{{\mathrm{UnSh}}}
\newcommand{\iu}{\mathrm{i}}

\newcommand{\Tot}{\operatorname{Tot}}

\newcommand{\rmH}{\mathrm{H}}

\newcommand{\rmh}{\mathrm{h}}
\newcommand{\rmD}{\mathrm{D}}

\newcommand{\dR}{\mathrm{dR}}

\newcommand{\dd}{\mathrm{d}}
\newcommand{\End}{\mathrm{End}}

\newcommand{\Hom}{\mathrm{Hom}}
\newcommand{\Iso}{\mathrm{Iso}}

\newcommand{\rank}{\mathrm{rk}}

\newcommand{\cl}{\mathrm{cl}}

\newcommand{\curv}{\mathrm{curv}}

\newcommand{\triv}{{\mathrm{triv}}}
\newcommand{\colim}{{\mathrm{colim}}}

\newcommand{\rmB}{\mathrm{B}}


\newcommand{\scU}{\mathscr{U}}

\newcommand{\scC}{\mathscr{C}}

\newcommand{\scZ}{\mathscr{Z}}

\newcommand{\scR}{\mathscr{R}}


\newcommand{\CG}{\pazocal{G}}
\newcommand{\CI}{\pazocal{I}}

\newcommand{\CT}{\pazocal{T}}

\newcommand{\CF}{\pazocal{F}}

\newcommand{\CE}{\pazocal{E}}

\newcommand{\CA}{\pazocal{A}}


\newcommand{\sfD}{\mathsf{D}}

\newcommand{\sfU}{\mathsf{U}}

\newcommand{\sfc}{\mathsf{c}}

\newcommand{\sfC}{\mathsf{C}}

\newcommand{\hol}{\mathsf{hol}}


\newcommand{\CN}{\mathbb{C}}
\newcommand{\bbD}{{\mathbb{D}}}
\newcommand{\bbH}{{\mathbb{H}}}
\newcommand{\NN}{\mathbb{N}}
\newcommand{\RN}{\mathbb{R}}
\newcommand{\ZN}{\mathbb{Z}}

\newcommand{\bbS}{{\mathbb{S}}}
\newcommand{\bbT}{{\mathbb{T}}}
\newcommand{\bbX}{{\mathbb{X}}}


\newcommand{\frg}{{\mathfrak{g}}}



\newcommand{\PT}{\mathtt{pt}}


\newcommand{\HLBdl}{{\pazocal{HL}\hspace{-0.02cm}\pazocal{B}un}}
\newcommand{\Grb}{{\pazocal{G}rb}}

\newcommand{\HVBdl}{{\pazocal{H}\hspace{-0.025cm}\pazocal{V}\hspace{-0.025cm}\pazocal{B}un}}

\newcommand{\Cat}{{\mathscr{C}\mathrm{at}}}

\newcommand{\Mfd}{{\mathscr{M}\mathrm{fd}}}

\newcommand{\Sh}{{\mathrm{Sh}}}


\newcommand{\ul}[1]{\underline{#1}}

\newcommand{\dslash}{{/\hspace{-0.1cm}/}}

\newcommand{\arisom}{\overset{\cong}{\longrightarrow}}

\newcommand{\cC}{\check{C}}

\newcommand{\<}{\langle}
\renewcommand{\>}{\rangle}

\newenvironment{myitemize}{\begin{itemize}[itemsep=-0.1cm, leftmargin=*, topsep=0cm]}{\end{itemize}}
\newenvironment{myenumerate}{\begin{enumerate}[itemsep=-0.1cm, leftmargin=*, topsep=0cm, label=(\arabic*)]}{\end{enumerate}}

\newcommand{\qen}{\hfill$\triangleleft$}

\newcommand{\qandq}{\quad \text{and} \quad }

\setlength{\parskip}{0.1cm}

\sloppy

\mathtoolsset{showonlyrefs,showmanualtags}


\begin{document}

\begin{flushright}
\small
\textsf{Hamburger Beiträge zur Mathematik Nr.\,886}\\
\textsf{ZMP--HH/21--1}
\end{flushright}

\begin{center}
\LARGE{\textbf{Gerbes in Geometry, Field Theory, and Quantisation}}
\end{center}
\begin{center}
\large Severin Bunk
\end{center}

\begin{abstract}
\noindent
This is a mostly self-contained survey article about bundle gerbes and some of their recent applications in geometry, field theory, and quantisation.
We cover the definition of bundle gerbes with connection and their morphisms, and explain the classification of bundle gerbes with connection in terms of differential cohomology.
We then survey how the surface holonomy of bundle gerbes combines with their transgression line bundles to yield a smooth bordism-type field theory.
Finally, we exhibit the use of bundle gerbes in geometric quantisation of 2-plectic as well as 1- and 2-shifted symplectic forms.
This generalises earlier applications of gerbes to the prequantisation of quasi-symplectic groupoids.
\end{abstract}

\tableofcontents

\section{Introduction}
\label{sec:Intro}

Bundle gerbes on a manifold $M$ are differential geometric representatives for the elements of $\rmH^3(M;\ZN)$, in analogy to how line bundles on $M$ represent the elements of $\rmH^2(M;\ZN)$.
Originally, \emph{gerbes} were introduced as certain sheaves of groupoids by Giraud~\cite{Giraud:Coho_NonAb}, and their popularity in geometry and physics was boosted by Hitchin's notes~\cite{Hitchin:Special_Lagr_Lectures} and Brylinski's book~\cite{Brylinski:Book}.
The concept of \emph{bundle gerbes} goes back to Murray~\cite{Murray:Bundle_gerbs}, who had learned about gerbes from Hitchin~\cite{Murray:Intro_to_gerbes}, and who was looking for a more differential geometric way of describing classes in $\rmH^3(M;\ZN)$.
Since then, the theory of bundle gerbes has been developed further, and various applications of bundle gerbes have been found and studied in mathematics and physics.

The main goal of the present article is to survey the theory of bundle gerbes with connection and some of its applications in a mostly self-contained fashion.
Additionally, we hope that this article may serve as a modern entry point to the area of bundle gerbes.
We assume only basic familiarity with category theory, not going beyond the notions of categories, functors and natural transformation.
The only original contributions of this article are the new presentation of the material, the notion of the curvature of a morphism of gerbes, and the suggestion to use bundle gerbes with connection to treat shifted symplectic quantisation in the world of differential geometry.
We point out that gerbes have also been employed very recently in shifted geometric quantisation in~\cite{Safronov:Shifted_GeoQuan} in the original algebro-geometric context of shifted symplectic structures.
Further, we apologise in advance for any incompleteness of references.
In particular, we do not attempt to present a full literature review in this introduction, but we include numerous references and pointers to further literature throughout the main text.

Let us provide a very basic idea of what a bundle gerbe is:
any hermitean line bundle on a manifold $M$ can be constructed (up to isomorphism) via local $\sfU(1)$-valued transition functions with respect to some open covering $\scU = \{U_a\}_{a \in \Lambda}$ of $M$.
These transition functions are smooth maps $g_{ab} \colon U_{ab} \to \sfU(1)$, where $U_{ab} = U_a \cap U_b$, for $a,b \in \Lambda$, satisfying the cocycle condition
\begin{equation}
	g_{bc} \cdot g_{ab} = g_{ac}
\end{equation}
on each triple overlap $U_{abc}$.
Heuristically speaking, a bundle gerbe is obtained by replacing the transition functions $g_{ab} \colon U_{ab} \to \sfU(1)$ by hermitean line bundles $L_{ab} \to U_{ab}$.
However, since line bundles admit morphisms between them, we cannot simply demand a strict analogue of the cocycle condition of the form ``$L_{bc} \otimes L_{ab} = L_{ac}$'' over triple overlaps.
Instead, we have to specify \emph{how} the two sides of this would-be equation are identified:
on each triple overlap $U_{abc}$ we have to give isomorphisms
\begin{equation}
	\mu_{abc} \colon L_{bc} \otimes L_{ab} \arisom L_{ab}\,,
\end{equation}
and these have to satisfy a version of the \v{C}ech 2-cocycle condition (see Section~\ref{sec:Grbs and twVBuns}).
The idea to replace functions by vector bundles gives great guidance for how to pass from the theory of line bundles to that of bundle gerbes; many rigorous analogies between the two theories can be discovered in this way.

The relevance of (bundle) gerbes includes, but is not limited to, the following results:
gerbes are geometric models for twists of K-theory~\cite{BCMMS, CW--Thom_iso+pfwd_in_twKT}, describe the $B$-field and D-branes in string theory~\cite{Kapustin:D-branes_in_nontriv_B-fields, Gawedzki:branes_in_WZW-models_and_gerbes, Waldorf--Thesis}, and play an important role in topological T-duality~\cite{BEM:T-duality, BN:T-Duality}.
It has been shown that (bundle) gerbes \emph{with connection} even model the third \emph{differential} cohomology of a manifold~\cite{Gajer:Geo_of_Deligne_Coho, Murray-Stevenson:Bgrbs--stable_isomps_and_local_theory}, and that they describe various anomalies in quantum field theory~\cite{CMM:BGrbs_applied_to_QFT, BMS:Smooth_2Grp_Ext_and_GrbSym}.
Bundle gerbes have found additional relevance as sources for twisted Courant algebroids in generalised geometry~\cite{Hitchin:Generalised_CY, Gualtieri:Thesis}, and certain infinitesimal symmetries of gerbes (and bundle gerbes) correspond to the Lie 2-algebra of sections of their associated Courant algebroids~\cite{FRS:L_infty-algs_of_local_obs, Collier:Sym_of_gerbes}.
Further, bundle gerbes with connection on a manifold $M$ correspond to certain line bundles with connection on the free loop space $LM$~\cite{Waldorf--Trangression_II}, and they give rise to smooth bordism-type field theories on $M$ (in the sense of Stolz-Teichner~\cite{ST:SuSy_FTs_and_generalised_coho}) in a functorial manner~\cite{BW:OCTFTs_and_Gerbes}.
Gerbes as well as bundle gerbes have been used in $2$-plectic and shifted geometric quantisation~\cite{LGX--Pre-quasi-sym_quant_via_Grbs, Rogers:Thesis, Bunk--Thesis, Safronov:Shifted_GeoQuan}, where they replace the prequantum line bundle of conventional geometric quantisation.
We survey some of these applications in the main part of this text.
From now on, whenever we use the term `gerbe', we shall mean `bundle gerbe'.

This article is structured as follows:
in Section~\ref{sec:Gerbes}, we first recast the theory of line bundles in a language which will allow us to directly obtain Murray's definition of gerbes with connection through the above process of replacing functions by vector bundles.
In particular, we recall the notion of a simplicial manifold, which we use throughout this article.
Then, we define bundle gerbes with connections, their morphisms, and their 2-morphisms, and survey the tensor product and duals of gerbes, before giving a detailed outline of the classification of gerbes with connection in terms of Deligne cohomology.
Along the way, we introduce the curvature of a morphism of gerbes, show how vector bundles on $M$ act on morphisms of gerbes on $M$, and give an introduction to the Deligne complex as a model for differential cohomology.
We finish this section with the examples of lifting bundle gerbes and cup-product bundle gerbes.

Section~\ref{sec:PT and field theory} is an introduction to the parallel transport of gerbes:
this is defined not just on paths and loops, but also on surfaces in $M$ with and without boundary.
We start with the most well-known case of gerbe holonomy around closed oriented surfaces and introduce the transgression line bundle as a necessary gadget for extending this construction to surfaces with boundary.
We illustrate how this gives rise to a smooth functorial field theory on $M$ in the sense of~\cite{ST:SuSy_FTs_and_generalised_coho, BW:OCTFTs_and_Gerbes}.
We conclude the section with various comments on the inclusion of D-branes into this picture, on the full parallel transport of gerbes with connection, and how the transgression line bundle arises as its holonomy.

Finally, in Section~\ref{sec:Higher Geo Quan} we survey two approaches to geometric quantisation in the presence of higher-degree versions of symplectic forms.
There are two such generalisations in the literature, going by the names of \emph{$n$-plectic forms} and \emph{shifted symplectic forms}.
We demonstrate that gerbes play the role of a higher prequantum line bundle in both cases.
In the $n$-plectic case, we survey Rogers' theory of Poisson Lie $n$-algebras~\cite{Rogers:Thesis} and a recent result by Krepski and Vaughan which relates multiplicative vector fields on a gerbe to its Poisson Lie $2$-algebra.
In the $n$-shifted symplectic case, we first describe derived closed and shifted symplectic forms in differential geometry following Getzler's notes~\cite{Getzler:Slides_on_Stacks}.
Then, we demonstrate how gerbes and their morphisms are perfectly suited to provide higher prequantum line bundles in this setting.
This contains the case of symplectic groupoids, where the notion of curvature of gerbe morphisms introduced here allows us to circumvent the exactness condition on the 3-form part of the shifted symplectic form from~\cite{LGX--Pre-quasi-sym_quant_via_Grbs}.
We finish by relating Waldorf's multiplicative gerbes~\cite{Waldorf:Multiplicative_Gerbes} to the 2-shifted prequantisation of the simplicial manifold $\rmB G$ for any compact, simple, simply connected Lie group $G$.

\vspace{-0.5cm}
\paragraph*{Topics not addressed in this survey}

The literature and relevance of gerbes is too vast to cover every aspect of it in this article.
However, there are several topics which should not go unmentioned entirely (for the same reason, though, the following list is necessarily still not exhaustive):
gerbes and higher gerbes are relevant in index theory; the $n$-form part of the Atiyah-Singer index theorem for families arises as the curvature of an $(n{-}2)$-gerbe~\cite{Lott--Index_Gerbes}.
Further, gerbes and 2-gerbes underlie various smooth models for the string group and control string structures on a manifold (and thus spin structures on its free loop space)~\cite{Waldorf--String_Conns_and_CS-thy, Waldorf:Spin_on_LM_characterising_String_Strs, BMS:Smooth_2Grp_Ext_and_GrbSym, Bunk:Pr_oo-bundles_and_String}.
Certain types of equivariant gerbes can be used to describe geometrically the three-dimensional Kane-Mele invariant of topological phases of matter, see~\cite{Gawedzki--2dFKM, GT--Gauge-theoretic_KM, BS--Top_Ins_and_KM} and references therein.
Finally, all gerbes that appear in this article are abelian (their transition functions are valued in an abelian group, see Section~\ref{sec:Deligne coho and classification}).
There is also a theory of \emph{non-abelian gerbes}---a recent review with further references can be found in~\cite{SW--Non-Ab_Gerbes}---and gerbes can be defined on geometric spaces more general than manifolds; see, for instance,~\cite{Huerta--BGrbs_on_SuperManifolds, Schreiber-DiffCoho_in_CohTopos}.

\vspace{-0.5cm}
\paragraph{Acknowledgements}

The author would like to thank Ezra Getzler and Pavel Safronov for insightful conversations about shifted symplectic forms in derived geometry, and Vicente Cortes, Thomas Mohaupt, and Carlos Shahbazi for various discussions about categorical structures in differential geometry.
Further, the author is grateful to Konrad Waldorf for comments on a first draft of this article.
This work was partially supported by the Deutsche Forschungsgemeinschaft (DFG, German Research Foundation) under Germany's Excellence Strategy---EXC 2121 ``Quantum Universe''---390833306.

\section{Bundle gerbes and their morphisms}
\label{sec:Gerbes}

In this section, we survey the main definitions of gerbes and their morphisms on manifolds, as well as  their tensor product and their duals.
We explain the classification of gerbes with connections in terms of Deligne (i.e.~differential) cohomology and exhibit two classes of gerbes as examples.

\subsection{A simplicial perspective on line bundles}
\label{sec:spl view on LBuns}

As a warm-up, let us recall the construction of hermitean line bundles from transition functions.
If $M$ is a manifold and $\scU = \{U_a\}_{a \in \Lambda}$ is an open covering of $M$, local data for a hermitean line bundle consists of a $\sfU(1)$-valued \v{C}ech 1-cocycle on $\scU$, i.e.~functions $g_{ab} \colon U_{ab} \to \sfU(1)$ such that $g_{bc} g_{ab} = g_{ac}$ on each (non-empty) triple intersection $U_{abc}$.
(Note that we are using the common notation $U_{a_0 \cdots a_n} \coloneqq U_{a_0} \cap \cdots \cap U_{a_n}$ for $a_0, \ldots, a_n \in \Lambda$.)
From these data we can construct a hermitean line bundle $P \to M$ given as
\begin{equation}
	P = \bigg( \coprod_{a \in \Lambda} U_a \times \CN \bigg) / {\sim}\,,
\end{equation}
where the equivalence relation is defined as follows:
we denote the elements in $\coprod_{a \in \Lambda} U_a \times \CN$ by $(a,x,z)$, where $a \in \Lambda$, $x \in U_a$, and $z \in \CN$.
Then, $(a,x,z) \simeq (b,x',z')$ precisely if $x = x'$ and $z' = g_{ab}(x) z$.
The \v{C}ech cocycle relation ensures that this is indeed an equivalence relation.

While open coverings of $M$ are the most common device to describe line bundles in term of transition data, they do not provide the most general choice:
let $\pi \colon Y \to M$ be any surjective submersion.
Consider a smooth map $g \colon Y \times_M Y \to \sfU(1)$, where $Y \times_M Y = \{(y_0,y_1) \in Y^2\, | \, \pi(y_0) = \pi(y_1)\}$ is the manifold of pairs of points in $Y$ which lie in a common fibre over $M$.
If, for every $y_0, y_1, y_2 \in Y$ in a common fibre, we have $g(y_1,y_2) g(y_0,y_1) = g(y_0,y_2)$, we can define
\begin{equation}
\label{eq:PBun from ssub}
	P = (Y \times \CN) /{\sim}\,,
	\qquad
	\text{with}\ (y_0,z_0) \sim \big( y_1, g(y_0,y_1)^{-1}\, z_0 \big) \quad \forall\, (y_0,y_1) \in Y \times_M Y\,.
\end{equation}
This, again, defines a hermitean line bundle on $M$.
(For a precise statement of how this generalises the open-covering picture, see Example~\ref{eg:ssub from opcov}.)

This construction extends to principal $G$ bundles on $M$:
if $G$ is a Lie group and $g \colon Y \times_M Y \to G$ is a smooth map such that $g(y_1,y_2) g(y_0,y_1) = g(y_0,y_2)$ for every $y_0, y_1, y_2 \in Y$ in a common fibre, we obtain a principal $G$-bundle as the quotient
\begin{equation}
	P = (Y \times G)/{\sim}\,,
	\qquad (y_0,h) \sim \big( y_1, g(y_0,y_1)^{-1}\,h \big)\,.
\end{equation}

We can reformulate this construction in the following way, which motivates much of our treatment of bundle gerbes in the later sections.
Given a surjective submersion $\pi \colon Y \to M$ of manifolds, we introduce the following notation; this may seem unnecessarily cumbersome for the description of line bundles at first, but it opens up a very general and powerful perspective on geometric structures, and will appear throughout this article.
For $n \in \NN$, we define the manifolds
\begin{equation}
	\cC Y_{n-1} = Y^{[n]} = Y \times_M \cdots \times_M Y
	= \{ (y_0, \ldots, y_{n-1}) \in Y^n\, | \, \pi(y_i) = \pi(y_j)\ \forall\, i,j = 0, \ldots, n-1 \}\,.
\end{equation}
We define smooth maps
\begin{align}
	&d_i^n \colon \cC Y_n \to \cC Y_{n-1},,
	\quad d^n_i (y_0, \ldots, y_n) = (y_0, \ldots, \widehat{y_i}, \ldots, y_n)\,,
	\\*
	&s^n_i \colon 
	\cC Y_n \to \cC Y_{n+1}\,,
	\quad s^n_i (y_0, \ldots, y_n) = (y_0, \ldots, y_{i-1}, y_i, y_i, y_{i+1}, \ldots, y_n)\,,
\end{align}
where $i \in \{0, \ldots, n-1\}$ and where the hat over an element denotes omission of that element.
In the following we will write $d_i$ and $s_i$ instead of $d^n_i$ and $s^n_i$, respectively, leaving the superscript $n$ as understood from context.
A direct check confirms that the maps $d_i, s_i$ satisfy the so-called \emph{simplicial identities}
\begin{alignat}{3}
\label{eq:spl identities}
	d_i \circ d_j &= d_{j-1} \circ d_i & &\text{if } i < j\,,
	\\
	d_i \circ s_j &= s_{j-1} \circ d_i & &\text{if } i < j\,,
	\\
	d_j \circ s_j &= 1_{\cC Y_n} = d_{j+1} \circ s_j \quad & & 0 \leq j \leq n
	\\
	d_i \circ s_j &= s_j \circ d_{i-1} & &\text{if } i > j+1\,,
	\\
	s_i \circ s_j &= s_{j+1} \circ s_i & &\text{if } i \leq j\,.
\end{alignat}

\begin{definition}
\label{def:spl object}
Let $\scC$ be a category.
A \emph{simplicial object in $\scC$} is a collection $\{X_n\}_{n \in \NN_0}$ of objects $X_n \in \scC$ together with morphisms $d_i \colon X_n \to X_{n-1}$ for $i = 0, \ldots, n$, and $s_i \colon X_n \to X_{n+1}$ for $i = 0, \ldots, n$ (for each $n \in \NN_0$), satisfying the simplicial identities~\eqref{eq:spl identities} (replacing $\cC Y_n$ by $X_n$).
The morphisms $d_i$ and $s_i$ are called the \emph{face maps} and the \emph{degeneracy maps} of $X$, respectively.
We will denote a simplicial object $(\{X_n\}_{n \in \NN_0}, d_i, s_i)$ simply by $X$.
A \emph{morphism $X \to X'$} of simplicial objects in $\scC$ is a collection of morphisms $f = \{f_n \colon X_n \to X'_n\}_{n \in \NN_0}$ with $f_{n-1} \circ d_i = d'_i \circ f_n$ and $f_{n+1} \circ s_n = s'_n \circ f_n$ for all $n$ and $i$.
\end{definition}

\begin{example}
A simplicial object in the category $\Mfd$ of manifolds and smooth maps is called a \emph{simplicial manifold}.
We see from our arguments above that the data $(\cC Y, d_i,s_i)$  obtained from any surjective submersion $\pi \colon Y \to M$ form a simplicial manifold.
We call this simplicial manifold the \emph{\v{C}ech nerve of $\pi \colon Y \to M$}.
\qen
\end{example}

\begin{definition}
\label{def:spl_+ object}
Let $\scC$ be a category.
An \emph{augmented simplicial object in $\scC$} is a simplicial object $X$ in $\scC$ together with an object $X_{-1} \in \scC$ and an additional morphism $d_{-1} = d^0_{-1} \colon X_0 \to X_{-1}$ such that $d_{-1} \circ d_0 = d_{-1} \circ d_1$.
We will denote an augmented simplicial object by $X \to X_{-1}$.
A \emph{morphism $(X \to X_{-1}) \to (X' \to X'_{-1})$} of augmented simplicial objects in $\scC$ is a morphism $f \colon X \to X'$ of simplicial objects together with a morphism $f_{-1} \colon X_{-1} \to X'_{-1}$ such that $f_{-1} \circ d_{-1} = d'_{-1} \circ f_0$.
\end{definition}

\begin{example}
The \v{C}ech nerve of any surjective submersion $\pi \colon Y \to M$ is even an augmented simplicial manifold:
we set $\cC Y_{-1} = M$ and $d_{-1} = \pi$.
\qen
\end{example}

\begin{example}
\label{eg:ssub from opcov}
A particular case of the preceding example arises from open coverings $\scU = \{U_a\}_{a \in \Lambda}$ of a manifold $M$:
setting $\cC \scU_0 \coloneqq \coprod_{a \in \Lambda} U_a$, we obtain a canonical surjective submersion $\pi \colon \cC \scU_0 \to M$.
(A point in $\cC \scU_0$ consists of a pair $(a,x)$ of $a \in \Lambda$ and $x \in U_a$, and $\pi$ sends this pair to $x \in M$.)
The \v{C}ech nerve of $\pi$ agrees with the usual \v{C}ech nerve of the open covering $\scU$, i.e.~we have
\begin{equation}
	\cC \scU_n = \coprod_{a_0, \ldots, a_n \in \Lambda} U_{a_0 \cdots a_n}\,,
\end{equation}
for every $n \in \NN_0$.
\qen
\end{example}

\begin{example}
\label{eg:M//G as spl Mfd}
Let $G$ be a Lie group with neutral element $e \in G$, acting on a manifold $M$ from the right.
We define a simplicial manifold $M \dslash G$ as follows:
we set $(M \dslash G)_n = M \times G^n$ and
\begin{align}
	d_i(x, g_1, \ldots, g_n) &=
	\begin{cases}
		(x \cdot g_1, g_2, \ldots, g_n) & i = 0\,,
		\\
		(x, g_1, \ldots, g_{i-1}, g_i g_{i+1}, g_{i+2}, \ldots, g_n) & 0<i<n\,,
		\\
		(x, g_1, \ldots, g_{n-1}) & i=n\,,
	\end{cases}
	\\
	s_i(x, g_1, \ldots, g_n) &=
	\begin{cases}
		(x, e, g_1, \ldots, g_n) & i=0\,,
		\\
		(x, g_1, \ldots, g_i, e, g_{i+1}, \ldots, g_n) & 0<i<n\,,
		\\
		(x, g_1, \ldots, g_n, e) & i=n\,.
	\end{cases}
\end{align}
The simplicial manifold $M \dslash G$ is the \emph{action Lie $\infty$-groupoid} associated with the $G$-action on $M$.
\qen
\end{example}

\begin{example}
\label{eg:BG as spl Mfd}
In the previous example, if $M = *$ is the one-point manifold carrying the trivial $G$-action, we also write $\rmB G \coloneqq * \dslash G$.
We call this simplicial manifold the \emph{classifying space for (principal) $G$-bundles}.
(This is not the classifying space of $G$-bundles used in algebraic topology; to arrive there, one has to take the geometric realisation of our $\rmB G$.
However, the nomenclature used here receives its justification through Proposition~\ref{st:G-Buns from CY -> BG}.)
\qen
\end{example}

\begin{proposition}
\label{st:G-Buns from CY -> BG}
Let $G$ be a Lie group and $\pi \colon Y \to M$ a surjective submersion.
Transition data for a principal $G$-bundle on $M$ with respect to $\pi$ is the same as a morphism of simplicial manifolds $g \colon \cC Y \to \rmB G$.
\end{proposition}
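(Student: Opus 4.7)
The plan is to unpack what a morphism of simplicial manifolds $g\colon \cC Y \to \rmB G$ really is, degree by degree, and observe that it is completely pinned down by its component in degree one, which then carries exactly the cocycle datum.

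First, since $(\rmB G)_0 = \pt$, the component $g_0 \colon Y \to \pt$ is forced, so the first nontrivial datum is $g_1 \colon \cC Y_1 = Y \times_M Y \to G$. I will first show this $g_1$ is automatically a cocycle. Write $g_2(y_0,y_1,y_2) = (A,B) \in G \times G = (\rmB G)_2$. The three face maps from $\cC Y_2$ to $\cC Y_1$ are the three projections $d_0(y_0,y_1,y_2) = (y_1,y_2)$, $d_1 = (y_0,y_2)$, $d_2 = (y_0,y_1)$, while on $\rmB G$ they act by $d_0(A,B)=B$, $d_1(A,B)=AB$, $d_2(A,B)=A$ according to Example~\ref{eg:M//G as spl Mfd}. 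Compatibility $g_1 \circ d_i = d_i \circ g_2$ for $i = 0,1,2$ therefore forces
\begin{equation}
	A = g_1(y_0,y_1)\,,\qquad B = g_1(y_1,y_2)\,,\qquad g_1(y_0,y_2) = g_1(y_0,y_1)\, g_1(y_1,y_2)\,,
\end{equation}
which is precisely the \v{C}ech cocycle condition on $\cC Y_2$. Similarly, compatibility with the degeneracy $s_0\colon Y \to Y\times_M Y$, $y\mapsto (y,y)$, matched with $s_0\colon \pt \to G$, $*\mapsto e$, yields the normalisation $g_1(y,y) = e$.

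For the converse, suppose we are given a smooth $g\colon Y\times_M Y \to G$ satisfying $g(y_0,y_2) = g(y_0,y_1)g(y_1,y_2)$ (equivalently, the cocycle condition of equation~\eqref{eq:PBun from ssub} after identification). I will define
\begin{equation}
	g_n \colon \cC Y_n \to (\rmB G)_n = G^n\,,\qquad g_n(y_0,\ldots,y_n) = \bigl(g(y_0,y_1),\, g(y_1,y_2),\, \ldots,\, g(y_{n-1},y_n)\bigr)\,,
\end{equation}
and check the simplicial identities. The outer face maps $d_0$ and $d_n$ on $\rmB G$ drop the first, resp.\ last, group element, which matches dropping $y_0$, resp.\ $y_n$, on $\cC Y$. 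For an inner face $d_i$ with $0<i<n$, the $\rmB G$ side multiplies the $i$-th and $(i{+}1)$-st factors, producing $g(y_{i-1},y_i) g(y_i,y_{i+1})$, which by the cocycle condition equals $g(y_{i-1},y_{i+1})$, and this is exactly the $i$-th entry of $g_{n-1}(y_0,\ldots,\widehat{y_i},\ldots,y_n)$. The degeneracy $s_i$ inserts a repeated $y_i$ on the source and an identity $e$ on the target, and this matches provided $g(y,y) = e$, which follows by setting $y_0=y_1=y_2=y$ in the cocycle identity.

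Finally, the two constructions are mutually inverse: starting from a simplicial map, the argument in the first step shows that $g_n$ is forced to equal the formula above in terms of $g_1$, so the assignment $g\mapsto g_1$ is injective, and the second step shows it is surjective. The only real bookkeeping obstacle is verifying all inner-face compatibilities in arbitrary degree $n$, but these reduce one-by-one to the single cocycle relation on triples, so no further data is needed. Smoothness of $g_n$ in the reverse direction is immediate from smoothness of $g$ and of the projections $\cC Y_n \to \cC Y_1$.
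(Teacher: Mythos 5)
Your proof is correct and follows essentially the same route as the paper: unpack the simplicial morphism level by level, observe that everything is determined by $g_1$, extract the cocycle and normalisation conditions from the face and degeneracy compatibilities in degree two, and reconstruct $g_n$ from a cocycle in the converse direction. Your treatment of the inverse construction is in fact slightly more explicit than the paper's (which handles degrees $n \geq 3$ by a brief inductive remark), and your ordering $g_1(y_0,y_2) = g_1(y_0,y_1)\,g_1(y_1,y_2)$ is the one actually forced by the face maps of Example~\ref{eg:M//G as spl Mfd}.
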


\begin{proof}
Let $g \colon \cC Y \to \rmB G$ be a morphism of simplicial manifolds.
This consists of a collection of smooth maps $g_n \colon \cC Y_n \to \rmB G_n = G^n$.
We can thus write the map $g_n$ as an $(n+1)$-tuple $g_n = (g_{n,1}, \ldots, g_{n,n})$ of maps $g_{n,i} \colon \cC Y \to G$.
Observe that the map $g_0 \colon Y \to \rmB G_0 = *$ is trivial.
The compatibility of $g$ with the face and degeneracy maps has the following consequences:
for $g_1 = (g_{1,1})$, we obtain the normalisation condition $g_1(y,y) = g_1 \circ s_0(y) = s_0 \circ g_0(y) = e$ for all $y \in Y$.
For $g_2 = (g_{2,1}, g_{2,2})$, we have $g_{2,1} = d_2 \circ g_2 = g_1 \circ d_2$, or equivalently $g_{2,1}(y_0,y_1,y_2) = g_1(y_0,y_1)$, for all $(y_0,y_1,y_2) \in Y^{[3]} = \cC Y_2$.
Analogously, using $d_0$ instead of $d_2$ we obtain $g_{2,2}(y_0,y_1,y_2) = g_1(y_1,y_2)$.
That is, the map $g_2 \colon Y^{[3]} \to G^2$ is completely determined by $g_1$:
we have $g_2(y_0,y_1,y_2) = (g_1(y_0,y_1),\, g_1(y_1,y_2))$ for all $(y_0,y_1,y_2) \in Y^{[3]} = \cC Y_2$.
Finally, using $d_1$, we obtain that $g_{2,0} \cdot g_{2,1} = d_1 \circ g_2 = g_1 \circ d_1$, and with our previous findings, this yields
\begin{equation}
\label{eq:1-Ccocycle for ssub}
	d_0^*g_1 \cdot d_2^*g_1 = d_1^*g_1\,,
\end{equation}
which is precisely the cocycle condition~\eqref{eq:PBun from ssub} we need to build a principal bundle from $g_1$.

For $g_3 = (g_{3,1}, g_{3,2}, g_{3,3})$, we can use that $g_{3,1} = d_2 \circ d_3 \circ g_3 = g_1 \circ d_2 \circ d_3$, and similarly for the other components, so that $g_3$ is completely determined by $g_1$ as well.
Inductively, we derive that this holds true for $g_n$, for any $n \geq 2$, and that the remaining compatibilities with the face and degeneracy maps follow readily from the cocycle condition~\eqref{eq:1-Ccocycle for ssub}.
\end{proof}

We have thus seen that we can encode principal $G$-bundles in morphisms of simplicial manifolds, for any Lie group $G$.
Let us now include connections.
For simplicity, we will restrict ourselves to $\sfU(1)$-bundles, i.e.~to the case of $G = \sfU(1)$.
In this case, connection forms are represented locally by 1-forms valued in $\iu\RN$.
To formulate connections in sufficient generality for our purposes, we first need the following dual notion of a simplicial object:

\begin{definition}
\label{def:cosimplicial object}
Let $\scC$ be a category.
A \emph{cosimplicial object in $\scC$} is a collection $\{X_n\}_{n \in \NN_0}$ of objects $X_n \in \scC$ together with morphisms $\partial_i \colon X_{n-1} \to X_n$ for $i = 0, \ldots, n$, and $\sigma_i \colon X_{n+1} \to X_n$ for $i = 0, \ldots, n$ (for each $n \in \NN_0$), satisfying the \emph{cosimplicial identities}
\begin{alignat}{3}
\label{eq:cospl identities}
	\partial_j \circ \partial_i &= \partial_i \circ \partial_{j-1} & &\text{if } i < j\,,
	\\
	\sigma_j \circ \partial_i &= \partial_i \circ \sigma_{j-1} & &\text{if } i < j\,,
	\\
	\sigma_j \circ \partial_j &= 1_{X_n} = \sigma_j \circ \partial_{j+1} \quad & & 0 \leq j \leq n
	\\
	\sigma_j \circ \partial_i &= \partial_{i-1} \circ \sigma_j & &\text{if } i > j+1\,,
	\\
	\sigma_j \circ \sigma_i &= \sigma_i \circ \sigma_{j+1} & &\text{if } i \leq j\,.
\end{alignat}
The morphisms $\partial_i$ and $\sigma_i$ are called the \emph{coface maps} and the \emph{codegeneracy maps} of $X$, respectively.
Often, we will denote a cosimplicial object $(\{X_n\}_{n \in \NN_0}, \partial_i, \sigma_i)$ simply by $X$.
A \emph{morphism $X \to X'$} of cosimplicial objects in $\scC$ is a collection of morphisms $f = \{f_n \colon X_n \to X'_n\}_{n \in \NN_0}$ with $f_n \circ \partial_i = \partial'_i \circ f_{n-1}$ and $f_n \circ \sigma_n = \sigma'_n \circ f_{n+1}$ for all $n$ and $i$.
\end{definition}

\begin{construction}
\label{cons:AltFace coch complex for csp VSp}
Let $X$ be a simplicial manifold.
For any $k \in \NN_0$, we obtain a family of real vector spaces $\{\Omega^k(X_n)\}_{n \in \NN_0}$.
The face and degeneracy maps of $X$ induce pullback maps $\partial_i = d_i^* \colon \Omega^k(X_{n-1}) \to \Omega^k(X_n)$ and $\sigma_i = s_i^* \colon \Omega^k(X_{n+1}) \to \Omega^k(X_n)$, respectively, which satisfy the cosimplicial identities~\eqref{eq:cospl identities}.
More concretely, $(\Omega^k(X), d_i^*, s_i^*)$ is a cosimplicial object in the category of real vector spaces and linear maps.
For each $n \in \NN_0$, we define the linear maps
\begin{equation}
	\delta \colon \Omega^k(X_n) \to \Omega^k(X_{n+1})\,,
	\qquad
	\delta(\omega) = \sum_{i=0}^{n+1} (-1)^i\, \partial_i(\omega)\,,
\end{equation}
which make $(\Omega^k(X), \delta)$ into a cochain complex of $\RN$-vector spaces.
Note that the same construction extends to augmented simplicial manifolds $X \to M$, giving a complex $(\Omega^k(X \to M),\delta)$ with $\Omega^k(M)$ in degree $-1$ and $\delta \colon \Omega^k(M) \to \Omega^k(X)$ given by $d_{-1}^*$.
\qen
\end{construction}

\begin{lemma}
\label{st:Cech coho on ssub}
\emph{\cite[Sec.~8]{Murray:Bundle_gerbs}}
Let $Y \to M$ be a surjective submersion.
For any $k \in \NN_0$, the complex $(\Omega^k(\cC Y), \delta)$ has trivial cohomology in all non-zero degrees.
The complex $(\Omega^k(\cC Y \to M), \delta)$ has trivial cohomology in all degrees.
\end{lemma}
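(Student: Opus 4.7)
The plan is to construct an explicit contracting homotopy for the cochain complex $(\Omega^k(\cC Y),\delta)$, following the standard partition-of-unity argument. Since $\pi \colon Y \to M$ is a surjective submersion, it admits local sections: choose an open cover $\{U_a\}_{a \in \Lambda}$ of $M$ together with smooth local sections $s_a \colon U_a \to Y$ of $\pi$, and pick a partition of unity $\{\rho_a\}_{a \in \Lambda}$ subordinate to $\{U_a\}$. For each $a$, define the partial map
\begin{equation}
\tilde{s}_a \colon \cC Y_{n-1}\big|_{\pi^{-1}(U_a)} \longrightarrow \cC Y_n\,, \qquad
(y_0, \ldots, y_{n-1}) \longmapsto \big(s_a(\pi(y_0)),\, y_0, \ldots, y_{n-1}\big)\,,
\end{equation}
which is well-defined because all $y_i$ and $s_a(\pi(y_0))$ lie in the common fibre over $\pi(y_0) \in U_a$.

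Next, I would define the candidate contracting homotopy $h \colon \Omega^k(\cC Y_n) \to \Omega^k(\cC Y_{n-1})$ for $n \geq 1$ by
\begin{equation}
h(\omega) \;=\; \sum_{a \in \Lambda} (\pi^* \rho_a) \cdot \tilde{s}_a^* \omega\,,
\end{equation}
where each summand is extended by zero outside the support of $\pi^*\rho_a$. The key computation is that $\tilde{s}_a$ satisfies $d_0 \circ \tilde{s}_a = \mathrm{id}$ and $d_j \circ \tilde{s}_a = \tilde{s}_a \circ d_{j-1}$ for $j \geq 1$, directly from the definition. Using these identities together with $d_j^* \pi^*\rho_a = \pi^*\rho_a$ (since $\pi \circ d_j = \pi$ on $\cC Y_n$), a short calculation gives
\begin{equation}
\tilde{s}_a^*(\delta \omega) \;=\; \omega - \delta(\tilde{s}_a^* \omega)\,,
\end{equation}
after reindexing the alternating sum and absorbing the sign shift. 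Multiplying by $\pi^*\rho_a$, summing over $a$, and using $\sum_a \rho_a = 1$ yields
\begin{equation}
h(\delta \omega) + \delta(h \omega) \;=\; \omega\,,
\end{equation}
so $h$ is a contracting homotopy and the cohomology in degrees $\geq 1$ vanishes.

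For the augmented complex, I would extend $h$ to the $(-1)$-degree by defining $h \colon \Omega^k(Y) \to \Omega^k(M)$ via $h(\omega)(x) = \sum_a \rho_a(x)\, \omega(s_a(x))$ (again extending by zero). The same type of computation, now using $\pi \circ s_a = \mathrm{id}_{U_a}$ and the face maps $d_0, d_1 \colon \cC Y_1 \to Y$, shows $h(\delta\omega) + \pi^*(h\omega) = \omega$ for $\omega \in \Omega^k(Y)$, while on $\Omega^k(M)$ one has $h(\pi^*\alpha) = \alpha$ since $\sum_a \rho_a \cdot s_a^* \pi^* \alpha = \alpha$. This handles both the injectivity of $\pi^*$ and the exactness at $\Omega^k(Y)$, completing the augmented case. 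The main obstacle is purely bookkeeping: getting the alternating signs in $\tilde{s}_a^*(\delta\omega)$ right after the index shift $j \mapsto j-1$, and verifying that the extension-by-zero convention is compatible with the pullbacks involved — neither is conceptually hard, but both demand careful handling.
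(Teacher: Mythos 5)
Your proof is correct and is essentially the argument the paper defers to via its citation of Murray (Sec.~8): the partition-of-unity contracting homotopy $h(\omega)=\sum_a(\pi^*\rho_a)\,\tilde{s}_a^*\omega$ built from local sections of $\pi$, with the identities $d_0\circ\tilde{s}_a=\mathrm{id}$ and $d_j\circ\tilde{s}_a=\tilde{s}_a\circ d_{j-1}$ giving $h\delta+\delta h=\mathrm{id}$, and the extension to degree $-1$ handling the augmented complex. The sign bookkeeping and the extension-by-zero convention you flag both work out exactly as you describe.
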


We also obtain that for any finite-dimensional $\RN$-vector space $V$, the complex $(\Omega^k(\cC Y \to M;V),\delta)$ of $V$-valued $k$-forms has trivial cohomology, for any $k \in \NN_0$.

\begin{proposition}
Let $Y \to M$ be a surjective submersion, and let $g \colon \cC Y \to \rmB \sfU(1)$ be a morphism of simplicial manifolds.
Let $\mu_{\sfU(1)} \in \Omega^1(\sfU(1);\iu \RN)$ denote the Maurer-Cartan form on $\sfU(1)$.
\begin{myenumerate}
\item We have $\delta (g_1^*\mu_{\sfU(1)}) = 0$.

\item The data of a connection form $A \in \Omega^1(Y;\iu\RN)$ for the principal $\sfU(1)$-bundle defined by $g$ is the same as a coboundary $A \in \Omega^1(\cC Y_0,\iu\RN)$ which trivialises the cocycle $g_1^*\mu_{\sfU(1)} \in \Omega^2(\cC Y_1;\iu\RN)$.
\end{myenumerate}
\end{proposition}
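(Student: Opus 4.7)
For part (i), my plan is to exploit the fact that $g$ is a morphism of simplicial manifolds to relocate the computation of $\delta(g_1^*\mu_{\sfU(1)})$ to $\rmB\sfU(1)$ itself. Concretely, since $d_i \circ g_2 = g_1 \circ d_i$ for $i=0,1,2$, we get
\begin{equation}
    \delta(g_1^*\mu_{\sfU(1)}) = \sum_{i=0}^{2}(-1)^i d_i^* g_1^* \mu_{\sfU(1)} = g_2^*\bigl(\partial_0 \mu_{\sfU(1)} - \partial_1 \mu_{\sfU(1)} + \partial_2 \mu_{\sfU(1)}\bigr),
\end{equation}
where the coface maps at level $1$ of $\rmB\sfU(1)$ are the second projection, the group multiplication $m$, and the first projection. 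The claim then reduces to the identity $m^*\mu_{\sfU(1)} = \pr_1^*\mu_{\sfU(1)} + \pr_2^*\mu_{\sfU(1)}$, which is precisely the Maurer-Cartan equation for the abelian Lie group $\sfU(1)$ (just the computation $(ab)^{-1}\dd(ab) = a^{-1}\dd a + b^{-1} \dd b$).

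For part (ii), I would unfold the data of a connection on $P = (Y \times \sfU(1))/{\sim}$ by working on the total space $Y \times \sfU(1)$ before descent. A $\sfU(1)$-invariant $\iu\RN$-valued connection 1-form on $Y \times \sfU(1)$ whose vertical part is the Maurer-Cartan form has the shape $\theta = \mu_{\sfU(1)} + \pr_Y^* A$ for some uniquely determined $A \in \Omega^1(Y;\iu\RN)$ (additively in $\iu\RN$). The entire content of the proposition is then the assertion that $\theta$ descends along $\sim$ if and only if $\delta A = g_1^* \mu_{\sfU(1)}$.

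To establish the descent condition, I would compare the canonical sections $d_0^* s$ and $d_1^* s$ of the pullback bundle on $\cC Y_1$ (with $s(y) = [(y,1)]$), which differ by multiplication with $g_1 \colon \cC Y_1 \to \sfU(1)$. The standard transformation rule for the local connection form under a $\sfU(1)$-gauge change then gives $d_0^* A - d_1^* A = g_1^* \mu_{\sfU(1)}$, which is exactly $\delta A = g_1^*\mu_{\sfU(1)}$. The converse is obtained by reading the argument backwards: any $A \in \Omega^1(Y;\iu\RN)$ satisfying $\delta A = g_1^*\mu_{\sfU(1)}$ makes $\mu_{\sfU(1)} + \pr_Y^* A$ on $Y \times \sfU(1)$ compatible with $\sim$, so it descends to a connection on $P$. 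The main thing to be careful about will be pinning down sign conventions consistent with the equivalence relation in~\eqref{eq:PBun from ssub}; once that is fixed, each direction is a short calculation. Note that the existence of \emph{some} $A$ trivialising $g_1^*\mu_{\sfU(1)}$ is already guaranteed by part (i) together with Lemma~\ref{st:Cech coho on ssub}, but the proposition sharpens this into an identification of the whole set of such primitives with the space of connections on $P$.
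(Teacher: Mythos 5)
Your proposal is correct. Note that the paper states this proposition without proof, so there is nothing to compare against line by line; what you write is exactly the argument implicit in the surrounding text. For (i), using simplicial naturality of $g$ to push the computation to $\rmB\sfU(1)$, where it reduces to $m^*\mu_{\sfU(1)} = \pr_1^*\mu_{\sfU(1)} + \pr_2^*\mu_{\sfU(1)}$, is the same device the paper uses in its proof of Proposition~\ref{st:G-Buns from CY -> BG} to extract the cocycle condition from $d_1$; one small quibble is that this identity is the primitivity of the Maurer--Cartan form on an abelian group rather than ``the Maurer--Cartan equation'' $\dd\mu_{\sfU(1)} + \tfrac{1}{2}[\mu_{\sfU(1)},\mu_{\sfU(1)}]=0$, though your parenthetical computation $(ab)^{-1}\dd(ab)=a^{-1}\dd a + b^{-1}\dd b$ is the right one. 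For (ii), writing an invariant connection form on $Y\times\sfU(1)$ as $\pr_{\sfU(1)}^*\mu_{\sfU(1)} + \pr_Y^*A$ and checking descent along the relation~\eqref{eq:PBun from ssub} does give exactly $d_0^*A - d_1^*A = g_1^*\mu_{\sfU(1)}$ with the paper's conventions $d_0(y_0,y_1)=y_1$, $d_1(y_0,y_1)=y_0$, so the sign issue you flag resolves in the stated direction; equivalently, your comparison of the two canonical sections over $\cC Y_1$ via the abelian gauge-transformation rule gives the same identity. Your closing observation that part (i) together with Lemma~\ref{st:Cech coho on ssub} already guarantees the existence of some such $A$ is also correct, and is precisely the mechanism the paper invokes later to conclude that every bundle gerbe admits a connection.
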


\begin{remark}
\label{rmk:conn as flatness witness}
If we view $g_1^*\mu_{\sfU(1)}$ as the \emph{curvature of the transition functions}, then a connection on the bundle defined by $g \colon \cC Y \to \rmB \sfU(1)$ is precisely a way of witnessing that the curvature $g_1^*\mu_{\sfU(1)}$ is trivial \emph{up to a specified homotopy} in $(\Omega^1(\cC Y; \iu\RN), \delta)$.
\qen
\end{remark}

\subsection{Gerbes and twisted vector bundles}
\label{sec:Grbs and twVBuns}

In this section we survey the definition of bundle gerbes and their morphisms.
Bundle gerbes were introduced in~\cite{Murray:Bundle_gerbs}, and this theory was further developed in particular in~\cite{Murray-Stevenson:Bgrbs--stable_isomps_and_local_theory,BCMMS,Waldorf--More_morphisms,Waldorf--More_morphisms,BSS--HGeoQuan,Bunk--Thesis}.
A short-hand approach to the same theory from a higher sheaf theoretic perspective has been developed in~\cite{NS--Equivariance_in_higher_geometry}.

In Section~\ref{sec:spl view on LBuns} we used $\sfU(1)$-valued functions of manifolds to construct hermitean line bundles (or $\sfU(1)$-bundles).
That is, we used objects with little structure---$\sfU(1)$-valued functions first of all form a set%
\footnote{The collection of $\sfU(1)$-valued functions on $M$ also has an abelian group structure, which induces the tensor product of line bundles.
We come to the analogue of this additional algebraic structure for gerbes in Section~\ref{sec:operations on BGrbs}.}%
---to obtain new objects with more structure:
the collection of line bundles has two layers of structure consisting of the objects (i.e.~the line bundles) and their morphisms.
That is, line bundles on a manifold form a \emph{category} rather than a set.

In order to define bundle gerbes, we iterate this idea:
we now aim to use line bundles as transition data for new geometric objects.
With this goal in mind, we imitate the constructions of Section~\ref{sec:spl view on LBuns}.
Thus, let $\pi \colon Y \to M$ be a surjective submersion.
The new `transition functions' now consist of a (hermitean) line bundle $L \to \cC Y_1$, replacing the function $g_1 \colon \cC Y_1 \to \sfU(1)$.
The key to constructing $\sfU(1)$-bundles from transition functions $g_1 \colon \cC Y_1 \to \sfU(1)$ in Section~\ref{sec:spl view on LBuns} was the cocycle condition~\eqref{eq:1-Ccocycle for ssub}.
Replacing the product in $\sfU(1)$ by the tensor product of line bundles, here this amounts to choosing an isomorphism
\begin{equation}
\label{eq:BGrb mu}
	\mu \colon d_0^*L \otimes d_2^*L \arisom d_1^*L
\end{equation}
of line bundles over $\cC Y_2$.
(Since line bundles admit morphisms between them, we can---and have to---specify \emph{how} the two sides of the cocycle condition on $L$ are identified, rather than simply saying that they are equal.)
For consistency, $\mu$ has to satisfy the following coherence condition:
the diagram
\begin{equation}
\label{eq:assoc for BGrb mu}
\begin{tikzcd}[column sep=1.25cm, row sep=1cm]
	p_{23}^*L \otimes p_{12}^*L \otimes p_{01}^*L \ar[r, "1 \otimes p_{012}^*\mu"] \ar[d, "p_{123}^*\mu \otimes 1"']
	& p_{23}^*L \otimes p_{02}^*L \ar[d, "p_{023}^*\mu"]
	\\
	p_{13}^*L \otimes p_{01}^*L \ar[r, "p_{013}^*\mu"']
	& p_{03}^*L
\end{tikzcd}
\end{equation}
of line bundles over $\cC Y_3$ commutes.
Here, $p_{i_0 \cdots i_k} \colon Y^{[n+1]} \to Y^{[k+1]}$, $(y_0, \ldots, y_n) \mapsto (y_{i_0}, \ldots, y_{i_k})$ are the smooth projection maps.
Note that these can be written as compositions of the face maps $d_i$ in non-unique ways; for instance, $p_{01} \colon Y^{[4]} \to Y^{[2]}$ can be written as $p_{01} = d_2 \circ d_2 = d_2 \circ d_3$.

\begin{definition}
\label{def:BGrb CG}
Let $M$ be a manifold.
A \emph{bundle gerbe} on $M$ is a tuple $\CG = (\pi \colon Y \to M, L, \mu)$ consisting of a surjective submersion $\pi$, a line bundle $L \to \cC Y_1$, and an isomorphism of line bundles $\mu$ as in~\eqref{eq:BGrb mu} which satisfies the coherence condition~\eqref{eq:assoc for BGrb mu}.
\end{definition}

\begin{definition}
\label{def:BGrb w conn}
Let $M$ be a manifold carrying a bundle gerbe $\CG = (\pi \colon Y \to M, L, \mu)$.
A \emph{connection on $\CG$} is a pair of a unitary connection on the line bundle $L$ and a 2-form $B \in \Omega^2(\cC Y_0; \iu\RN)$ such that $\curv(L) = - \delta B$ in $\Omega^2(\cC Y_1; \iu\RN)$, where $\curv(L)$ is the curvature of the connection on $L$.
The 2-form $B$ is called the \emph{curving} of $\CG$.
By Lemma~\ref{st:Cech coho on ssub}, there exists a unique closed 3-form $H \in \Omega^3(M;\iu\RN)$ with $\pi^*H = \dd B$; we set $\curv(\CG) = H$ and call this the \emph{curvature 3-form of $\CG$}.
\end{definition}

As a consequence of Lemma~\ref{st:Cech coho on ssub}, every bundle gerbe admits a connection~\cite{Murray:Bundle_gerbs}.

\begin{remark}
Compare the condition $\curv(L) = - \delta B$ to Remark~\ref{rmk:conn as flatness witness}:
here, $\curv(L)$ is the curvature of the transition data (replacing $g_1^*\mu_{\sfU(1)}$), and $-B$ is a degree-zero element in $\Omega^2(\cC Y; \iu\RN)$ which trivialises $\curv(L)$ in $(\Omega^2(\cC Y; \iu\RN), \delta)$.
\qen
\end{remark}

We now define morphisms of bundle gerbes.
These are again modelled on morphisms of line bundles on $M$, which are defined in terms of transition functions $g,g' \colon \cC Y \to \rmB \sfU(1)$:
morphisms of line bundles correspond to functions $\psi \colon \cC Y_0 \to \CN$ satisfying
\begin{equation}
	d_0^*\psi \cdot g_1 = g'_1 \cdot d_1^*\psi
	\quad
	\text{over } \cC Y_1\,.
\end{equation}
To obtain morphisms of gerbes, we replace functions by vector bundles and identities by isomorphisms.
However, in general two bundle gerbes $\CG_0, \CG_1$ will be defined over different surjective submersions, so that we can only compare the gerbes after choosing a common refinement of the surjective submersions.

\begin{definition}
\label{def:1mp of BGrbs}
Let $\CG_i = (\pi_i \colon Y_i \to M, L_i, \mu_i)$, for $i = 0,1$, be two bundle gerbes on a manifold $M$.
A \emph{morphism of bundle gerbes} $\CE \colon \CG_0 \to \CG_1$ is a tuple $\CE = (\zeta \colon Z \to Y_1 \times_M Y_2, E, \alpha)$, consisting of the following data:
$\zeta$ is a surjective submersion onto $Y_1 \times_M Y_2 = \{(y,y') \in Y_1 \times Y_2\, | \, \pi_1(y) = \pi_2(y')\}$, and $E \to Z$ is a hermitean vector bundle.
The composition $Z \to Y_1 \times_M Y_2 \to M$ is a surjective submersion to $M$ with \v{C}ech nerve $\cC Z$.
Then,
\begin{equation}
	\alpha \colon d_0^*E \otimes L_0 \longrightarrow L_1 \otimes d_1^*E
\end{equation}
is an isomorphism of hermitean vector bundles over $\cC Z_1 = Z \times_M Z$.
This has to be compatible with $\mu_0$ and $\mu_1$ in the sense that the following diagram of vector bundles over $\cC Z_2$ commutes:
\begin{equation}
\begin{tikzcd}[column sep=1.5cm, row sep=1cm]
	p_2^*E \otimes p_{12}^*L_0 \otimes p_{01}^*L_0 \ar[r, "p_{12}^* \alpha \otimes 1"] \ar[d, "1 \otimes \mu_0"']
	& p_{12}^*L_1 \otimes p_1^*E \otimes p_{01}^*L_0 \ar[r, "1 \otimes p_{01}^* \alpha"]
	& p_{12}^*L_1 \otimes p_{01}^*L_1 \otimes p_3^*E \ar[d, "\mu_1 \otimes 1"]
	\\
	p_2^*E \otimes p_{02}^*L_0 \ar[rr, "p_{02}^*\alpha"']
	& & p_{02}^*L_1 \otimes p_0^*E
\end{tikzcd}
\end{equation}
\end{definition}

Note that we have omitted the pullbacks of $L_i$ from $(\cC Y_i)_1 = Y_i \times_M Y_i$ to $\cC Z_1$ along the maps $\cC Z_1 \to (\cC Y_i)_1$ induced by $\zeta$ in order to avoid overly cluttered notation.

\begin{definition}
\label{def:1mp of BGrbs w conn}
If $\CG_0, \CG_1$ additionally carry connections, then a \emph{morphism of gerbes with connection} is a morphism $\CE = (\zeta \colon Z \to Y_0 \times_M Y_1, E, \alpha) \colon \CG_0 \to \CG_1$ of the underlying bundle gerbes where the hermitean vector bundle $E$ additionally carries a unitary connection and $\alpha$ is connection-preserving.
We call a morphism $\CE \colon \CG_0 \to \CG_1$ \emph{parallel} (equivalently, it satisfies the \emph{fake curvature condition}) if $\curv(E) = B_0 - B_1$ in $\Omega^2(\cC Z_0; \iu \RN)$.
\end{definition}

Recall that in Section~\ref{sec:spl view on LBuns} we constructed geometric objects with more structure (line bundles, which form a category) out of geometric objects with less structure ($\sfU(1)$-valued functions, which form a set).
The same is true here:
morphisms of bundle gerbes are built from vector bundles, which are again geometric objects that admit morphisms between them.
This leads to:

\begin{definition}
\label{def:2mp of BGrbs}
Let $\CG_0, \CG_1$ be bundle gerbes on $M$, and suppose $\CE = (\pi \colon Z \to Y_0 \times_M Y_1, E, \alpha)$ and $\CE' = (\pi' \colon Z' \to Y_0 \times_M Y_1, E', \alpha')$ are two morphisms $\CG_0 \to \CG_1$.
A \emph{2-morphism} $\CE \to \CE'$ is an equivalence class of tuples $(w \colon W \to Z \times_M Z', \psi)$, where $w$ is a surjective submersion, and where $\psi \colon E \to E'$ is a morphism of hermitean vector bundles over $W$ (where we have omitted the pullback maps for $E$ and $E'$), making the diagram
\begin{equation}
\begin{tikzcd}[column sep=1.25cm, row sep=1cm]
	d_0^*E \otimes L_0 \ar[r, "\alpha"] \ar[d, "d_0^*\psi \otimes 1"']
	& L_1 \otimes d_1^*E \ar[d, "1 \otimes d_1^*\psi"]
	\\
	d_0^*E' \otimes L_0 \ar[r, "\alpha'"']
	& L_1 \otimes d_1^*E'
\end{tikzcd}
\end{equation}
of hermitean vector bundles over $\cC W_1 = W \times_M W$ commute.
Two tuples $(w \colon W \to Z \times_M Z', \psi)$, $(\tilde{w} \colon \tilde{W} \to Z \times_M Z', \tilde{\psi})$ are equivalent if there exists a surjective submersion $v \colon V \to W \times_M \tilde{W}$ such that the pullbacks of $\psi$ and $\tilde{\psi}$ to $V$ agree.
\end{definition}

We will usually denote a 2-morphism $[w \colon W \to Z \times_M Z', \psi]$ just as $\psi \colon \CE \to \CE'$.
The definition of 2-morphisms of bundle gerbes carries over verbatim to bundles gerbes with connection.
In that situation, $\CE$ and $\CE'$ carry connections, and we call $\psi \colon \CE \to \CE'$ \emph{parallel} whenever the underlying morphism $\psi \colon E \to E'$ of vector bundles is connection-preserving.

We thus have \emph{three} layers of structure for bundle gerbes, given by bundle gerbes, their morphisms, and their 2-morphisms.
The main structural result is the following theorem.
We refer the reader to Appendix~\ref{app:2Cats} for a very brief survey of 2-categories.

\begin{theorem}
\label{st:Grb(M) is 2Cat}
Let $M$ and $N$ be manifolds, and let $f \colon N \to M$ be a smooth map.
\begin{myenumerate}
\item The collection of bundle gerbes on $M$ forms a 2-category $\Grb(M)$.

\item The collection of bundle gerbes with connection on $M$ forms a 2-category $\Grb^\nabla(M)$.

\item In both $\Grb(M)$ and $\Grb^\nabla(M)$, a morphism $\CE \colon \CG_0 \to \CG_1$ is invertible (see Definition~\ref{def:invertible mp in 2Cat}) if and only if its underlying hermitean vector bundle $E$ has rank one.

\item The smooth map $f$ induces pullback 2-functors
\begin{equation}
	f^* \colon \Grb(M) \to \Grb(N)
	\qandq
	f^* \colon \Grb^\nabla(M) \to \Grb^\nabla(N)\,,
\end{equation}
satisfying $\curv(f^*\CG) = f^*\curv(\CG)$, for any $\CG \in \Grb^\nabla(M)$.
\end{myenumerate}
\end{theorem}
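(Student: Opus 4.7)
The plan is to assemble the 2-category $\Grb(M)$ layer by layer: 1-morphism composition, 2-morphism compositions, coherence, and then transport the structure to the connective version and to pullbacks along $f \colon N \to M$. A common theme throughout is that every object and morphism lives over its own surjective submersion, so composition always requires passing to a common refinement --- this is precisely why the result is naturally a 2-category rather than a strict 1-category. To compose $\CE = (\zeta, E, \alpha) \colon \CG_0 \to \CG_1$ with $\CE' = (\zeta', E', \alpha') \colon \CG_1 \to \CG_2$, I would form the fibre product $Z'' = Z \times_{Y_1} Z'$ using the $Y_1$-projections of $\zeta$ and $\zeta'$; this is a surjective submersion onto $Y_0 \times_M Y_2$. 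The underlying vector bundle is the tensor product of the pullbacks of $E'$ and $E$ to $Z''$, and the transition isomorphism on $\cC Z''_1$ is obtained by splicing the pullbacks of $\alpha$ and $\alpha'$ through the intermediate factor of $L_1$. The compatibility with $\mu_0, \mu_1, \mu_2$ follows by stacking the coherence hexagons for $\alpha$ and $\alpha'$. The identity 1-morphism on $\CG$ takes $Z = Y$ with $\zeta = \Delta \colon Y \to Y \times_M Y$, trivial line bundle $\underline{\CN}$, and the isomorphism induced by $\mu$.

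For 2-morphisms, vertical composition of $\psi \colon \CE \to \CE'$ and $\psi' \colon \CE' \to \CE''$ is defined on a common refinement $V \to W \times_{Z \times_M Z'} W'$ of representing submersions, where the vector-bundle morphisms are composed pointwise; horizontal composition takes the tensor product of the morphisms over $W \times_{Y_1} W''$. The associator, the left and right unitors, and the interchange law then reduce to the universal property of fibre products together with the classical coherences for $\otimes$ on $\Vect$, so the pentagon and triangle axioms hold essentially formally. For part (2), every step goes through with connections: the tensor-product connection on $E' \otimes E$ gives $\curv(E' \otimes E) = \curv(E') + \curv(E)$, so the fake-curvature condition telescopes as $(B_0 - B_1) + (B_1 - B_2) = B_0 - B_2$ under composition; the identity morphism carries the trivial flat connection; and tensor products and composites of connection-preserving maps remain connection-preserving.

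For part (3), if $\rank(E) = 1$, I would define $\CE^{-1}$ over the same $Z$ post-composed with the swap $Y_0 \times_M Y_1 \to Y_1 \times_M Y_0$, with underlying line bundle $E^*$ and transition isomorphism induced by $\alpha^{-1}$; the hermitean duality $E \otimes E^* \cong \underline{\CN}$ then supplies 2-isomorphisms $\CE^{-1} \circ \CE \cong \mathrm{id}_{\CG_0}$ and $\CE \circ \CE^{-1} \cong \mathrm{id}_{\CG_1}$. Conversely, if $\CE$ admits an inverse $\CF$ with underlying bundle $F$ of rank $r$, then the underlying bundle of $\CF \circ \CE$, pulled back to a common refinement, has rank $r \cdot \rank(E)$; but it must be 2-isomorphic to an identity morphism, whose underlying bundle is a line bundle. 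Since a 2-morphism in $\Grb(M)$ restricts (after further refinement) to a fibrewise isomorphism of the underlying vector bundles, the ranks must agree, forcing $\rank(E) = 1$.

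For part (4), given $f \colon N \to M$, set $f^*\CG = (N \times_M Y \to N,\, \mathrm{pr}_Y^*L,\, \mathrm{pr}_Y^*\mu)$. The \v{C}ech nerve of $N \times_M Y \to N$ maps canonically to that of $Y \to M$, so the pullbacks of $L$ and $\mu$ are well-defined and the coherence of $\mu$ is preserved. Morphisms and 2-morphisms pull back analogously, and 2-functoriality (including a canonical $(g \circ f)^* \cong f^* \circ g^*$) follows from the universal property of fibre products. For the connective version, one pulls back connection 1-forms and curvings; the fake-curvature condition is preserved, and $\dd f^*B = f^* \dd B = f^* \pi^* H = \pi^* f^*H$ yields $\curv(f^*\CG) = f^*\curv(\CG)$. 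The main obstacle throughout is the combinatorial bookkeeping for the 2-category axioms --- most notably the pentagon for 1-morphism composition and the interchange law, where one must mediate between different bracketings of nested fibre products --- but this verification, while patient, reduces to the universal property of the fibre product combined with the standard coherences of $\Vect$.
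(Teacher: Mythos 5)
Your composition of 1-morphisms, your vertical and horizontal composition of 2-morphisms, your pullback 2-functors, and your rank argument for part (3) all follow the standard route; this is also the route the paper takes, which records exactly your composition formula and otherwise defers the detailed verification to Murray--Stevenson, BCMMS and Waldorf. There is, however, one genuine error: the identity 1-morphism. You take $Z = Y$ with $\zeta = \Delta \colon Y \to Y \times_M Y$ and the trivial line bundle, but the diagonal is \emph{not} a surjective submersion onto $Y \times_M Y$ --- its image misses every pair $(y_0,y_1)$ with $y_0 \neq y_1$ in a common fibre, i.e.\ everything off the diagonal whenever the fibres of $\pi$ are not single points --- so this datum is not a morphism in the sense of Definition~\ref{def:1mp of BGrbs}. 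The correct identity, as the paper records, is $1_\CG = (1_{Y \times_M Y},\, L,\, p_{013}^*\mu^{-1} \circ p_{023}^*\mu)$: the underlying bundle is $L$ itself over $Y^{[2]}$, not the trivial bundle. This is not cosmetic: the unit for the composition $\CE' \circ \CE = (Z' \times_{Y_1} Z,\, E' \otimes E,\, \alpha' \otimes \alpha)$ has to absorb the extra fibre-product factor, and $L$ together with $\mu$ is exactly what does so; a trivial bundle cannot, because the comparison of the fibres $L_{(y_0,y_0')}$ and $L_{(y_1,y_1')}$ over $\cC Z_1$ that the transition isomorphism of $1_\CG$ must encode is mediated by $\mu$ and is not the identity.

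The error propagates into the parts of your outline you treat as formal. The unitors, which you reduce to ``the universal property of fibre products plus the coherences of $\otimes$ on $\Vect$'', in fact require the 2-isomorphism $1_{\CG_1} \circ \CE \to \CE$ to be built from $\alpha$ itself (it is $\alpha$ that identifies $L_1 \otimes E$ with the pullback of $E$), not from a rearrangement of tensor factors. Likewise in part (3), the 2-isomorphism $\CE^{-1} \circ \CE \to 1_{\CG_0}$ must identify $E^\vee \otimes E$ with $L_0$ over a refinement of $Y_0 \times_M Y_0$; hermitean duality $E \otimes E^\vee \cong Z \times \CN$ alone does not produce this, and one again needs $\alpha$ and $\mu_1$. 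Your converse rank argument in (3), the telescoping of the fake-curvature condition in (2) (which, note, the paper only imposes for the subclass of \emph{parallel} morphisms, not for all morphisms of $\Grb^\nabla(M)$), and part (4) are fine; once the identity morphism is corrected, the rest of your outline goes through as described.
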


The notion of morphisms of gerbes goes back to~\cite{Murray-Stevenson:Bgrbs--stable_isomps_and_local_theory}, where only line bundles were used.
This was extended to general vector bundles in~\cite{BCMMS,Waldorf--More_morphisms}.

\begin{example}
The composition of $\CE \colon \CG_0 \to \CG_1$ and $\CE' \colon \CG_1 \to \CG_2$ is given by
\begin{equation}
	\CE' \circ \CE
	= \big( Z' \times_{Y_1} Z \to Y_0 \times_M Y_2,\, E' \otimes E,\, \alpha' \otimes \alpha \big)\,,
\end{equation}
where we have again omitted pullbacks.
For any gerbe $\CG$ (possibly with connection), the identity morphism $1_\CG \colon \CG \to \CG$ is given by $1_\CG = (1_{Y \times_M Y}, L, p_{013}^*\mu^{-1} \circ p_{023}^*\mu)$.
\qen
\end{example}

\begin{remark}
One can show that any morphism $\CE \colon \CG_0 \to \CG_1$ is 2-isomorphic to a morphism $\CE'$ whose surjective submersion is the identity on $Y_0 \times_M Y_1$ (see~\cite[Thm.~2.4.1]{Waldorf--Thesis} and~\cite[Thm.~A.19]{Bunk--Thesis}).
Further, every 2-morphism $\psi = [W \to Z \times_M Z', \psi]$ has a unique representative whose surjective submersion is the identity on $Z \times_M Z'$~\cite[Prop.~A.16]{Bunk--Thesis}.
However, including the general choices of surjective submersions ensures that we have a functorial composition of morphisms in $\Grb^\nabla(M)$ rather than a weaker notion of composition.
\qen
\end{remark}

\subsection{Operations on gerbes and their morphisms}
\label{sec:operations on BGrbs}

We saw in Theorem~\ref{st:Grb(M) is 2Cat} that one can pull back gerbes, their morphisms and their 2-morphisms along smooth maps of manifolds.
In this section we survey further operations on gerbes and their morphisms.
These operations are again motivated by the operations one can perform on the category of hermitean line bundles.
We present all constructions for gerbes with connection; the corresponding versions for gerbes without connections are obtained simply by forgetting all connections.

\begin{definition}
\label{def:tensor in Grb(M)}
Let $\CG_0, \CG_1, \CG'_0, \CG'_1 \in \Grb^\nabla(M)$ be bundle gerbes with connection on $M$, let $\CE, \CF \colon \CG_0 \to \CG_1$ and $\CE', \CF' \colon \CG'_0 \to \CG'_1$ be morphisms in $\Grb(M)$, and let $\psi \colon \CE \to \CF$ and $\varphi \colon \CE' \to \CF'$ be 2-morphisms.
\begin{myenumerate}
\item The \emph{tensor product of gerbes} $\CG_0 = (\pi_0 \colon Y_0 \to M, L_0, \mu_0, B_0)$ and $\CG'_0 = (\pi'_0 \colon Y'_0 \to M, L'_0, \mu'_0, B'_0)$ is the bundle gerbe with connection
\begin{equation}
	\CG_0 \otimes \CG'_0 = (Y_0 \times_M Y'_0 \to M,\, L_0 \otimes L'_0,\, \mu_0 \otimes \mu'_0,\, B_0 + B'_0)\,,
\end{equation}
where we are omitting pullbacks along the projections $Y_0 \leftarrow Y_0 \times_M Y'_0 \to Y'_0$.

\item The \emph{tensor product of morphisms} $\CE = (Z \to Y_0 \times_M Y_1, E, \alpha)$ and $\CE' = (Z' \to Y'_0 \times_M Y'_1, E', \alpha')$ reads as
\begin{equation}
	\CE \otimes \CE' = \big( Z \times_M Z' \to (Y_0 \times_M Y'_0) \times_M (Y_1 \times_M Y'_1),\, E \otimes E',\, \alpha \otimes \alpha' \big)\,,
\end{equation}
where we have omitted pullbacks and canonical isomorphisms which rearrange tensor products of vector bundles, such as $E \otimes L_0 \otimes E' \otimes L'_0 \cong L_0 \otimes L'_0 \otimes E \otimes E'$.

\item The \emph{tensor product of 2-morphisms} $\psi = [W \to Z \times_M X, \psi]$ and $\varphi = [W' \to Z' \times_M X', \varphi]$ is given by
\begin{equation}
	\psi \otimes \varphi = [W \times_M W' \to (Z \times_M Z') \times_M (X \times_M X'),\, \psi \otimes \varphi]\,.
\end{equation}
\end{myenumerate}
\end{definition}

These tensor products are compatible with compositions of morphisms and 2-morphisms~\cite{Waldorf--Thesis}.
For $A \in \Omega^1(M; \iu\RN)$, denote the trivial line bundle with connection given by $A$ as $I_A = (M \times \CN, A)$.
The trivial line bundle with connection, $I_0 = (M \times \CN, 0)$ is the monoidal unit (the neutral element) for the tensor product of line bundles.

\begin{example}
\label{eg:CI_rho}
Let $\rho \in \Omega^2(M; \iu\RN)$.
The \emph{trivial bundle gerbe on $M$ with connection given by $\rho$} is
\begin{equation}
	\CI_\rho = (1_M \colon M \to M, L = I_0, m, \rho)\,,
\end{equation}
where $m \colon I_0 \otimes I_0 \to I_0$ is the morphism given by $(x,z) \otimes (x,z') \mapsto (x, z \cdot z')$ for $x \in M$, $z,z' \in \CN$.
The \emph{trivial bundle gerbe with connection} is $\CI_0$; this is the monoidal unit for the tensor product of bundle gerbes with connection~\cite{Waldorf--More_morphisms}.
\qen
\end{example}

\begin{theorem}
\label{st:Grb(M) is sym mon 2Cat}
\emph{\cite{Waldorf--More_morphisms, Waldorf--Thesis}}
For any manifold $M$, the tensor product of gerbes, their morphisms and their 2-morphisms turns $(\Grb^\nabla(M), \otimes, \CI_0)$ into a symmetric monoidal 2-category.
\end{theorem}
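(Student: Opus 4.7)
The plan is to establish each layer of the symmetric monoidal 2-category structure by reducing it to the corresponding structure on hermitean line and vector bundles with connection, spread over appropriate iterated fibre products of surjective submersions.

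First I would verify that $\otimes$ is a 2-functor $\Grb^\nabla(M) \times \Grb^\nabla(M) \to \Grb^\nabla(M)$. The main point is compatibility with composition: given composable 1-morphisms $\CE, \CE'$ in one slot and $\CF, \CF'$ in the other, one needs a canonical 2-isomorphism
\begin{equation}
(\CE' \otimes \CF') \circ (\CE \otimes \CF) \arisom (\CE' \circ \CE) \otimes (\CF' \circ \CF).
\end{equation}
Both sides are built from vector bundles of the form $E' \otimes E \otimes F' \otimes F$ over the common fibre product of the underlying surjective submersions, and the 2-isomorphism is induced by the canonical swap $E \otimes F' \arisom F' \otimes E$ in the symmetric monoidal category of hermitean vector bundles with connection. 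Its compatibility with the structure maps $\alpha$ follows from naturality of the braiding, and unit preservation is a direct check.

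Next I would construct the associator, unitors, and braiding as invertible 1-morphisms in $\Grb^\nabla(M)$. For the associator
\begin{equation}
a_{\CG_1,\CG_2,\CG_3} \colon (\CG_1 \otimes \CG_2) \otimes \CG_3 \to \CG_1 \otimes (\CG_2 \otimes \CG_3)
\end{equation}
I would take $Z = Y_1 \times_M Y_2 \times_M Y_3$ (the surjective submersion shared by both sides), underlying line bundle $I_0$, and let $\alpha$ be induced by the canonical associator for the tensor product of line bundles, rearranging $(L_1 \otimes L_2) \otimes L_3$ into $L_1 \otimes (L_2 \otimes L_3)$ over $Z \times_M Z$. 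Since the curvings add and addition is associative, the fake curvature condition is satisfied strictly, and the morphism is both parallel and invertible by Theorem~\ref{st:Grb(M) is 2Cat}(iii). The left and right unitors are constructed analogously using the canonical identifications $I_0 \otimes L \cong L \cong L \otimes I_0$ together with Example~\ref{eg:CI_rho}. The braiding $b_{\CG,\CG'} \colon \CG \otimes \CG' \to \CG' \otimes \CG$ uses the swap diffeomorphism $Y \times_M Y' \to Y' \times_M Y$ together with the symmetry $L \otimes L' \to L' \otimes L$ of the line bundle tensor product.

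Finally, one has to verify the coherence axioms (pentagon, triangle, hexagons, and $b \circ b = 1_{\CG \otimes \CG'}$) as equalities of 2-morphisms. The main obstacle is bookkeeping rather than depth: each axiom involves 1-morphisms whose underlying surjective submersions differ, so one must pass, using the equivalence relation in Definition~\ref{def:2mp of BGrbs}, to a common refinement over which all relevant 2-morphisms can be compared. After this reduction, each axiom becomes a diagram of line bundles (and their tensor products with the $L_i$) over an iterated fibre product $Y_1 \times_M \cdots \times_M Y_n$; these diagrams commute by the coherence theorem for the symmetric monoidal category of hermitean line bundles with connection on that fibre product. Compatibility with the structure maps $\mu_i$ and with the $\alpha$-data of the 1-morphisms is automatic from naturality of the associator and braiding, which completes the verification.
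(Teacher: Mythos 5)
The paper itself offers no proof of this theorem; it is quoted from the cited works of Waldorf, and your overall strategy---define the coherence data via the canonical isomorphisms rearranging fibre products and tensor products of hermitean vector bundles, then reduce every axiom to coherence for the symmetric monoidal category of bundles over a suitable common refinement---is exactly the strategy carried out there. The 2-functoriality/interchange discussion and the reduction of the pentagon, triangle and hexagon axioms to naturality of the bundle-level associator and braiding are sound in outline.

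There is, however, a concrete flaw in your construction of the structure 1-morphisms. By Definition~\ref{def:1mp of BGrbs}, a morphism $(\CG_1 \otimes \CG_2)\otimes\CG_3 \to \CG_1\otimes(\CG_2\otimes\CG_3)$ requires a \emph{surjective} submersion $\zeta \colon Z \to Y_{123}\times_M Y_{123}$, where $Y_{123} = Y_1\times_M Y_2\times_M Y_3$ is the submersion underlying \emph{both} gerbes; your choice $Z = Y_{123}$ can only map to this fibre product via the diagonal, which is neither surjective nor a submersion unless the fibres of $Y_{123}\to M$ are discrete. The same objection applies to the braiding (the graph of the swap $Y\times_M Y' \to Y'\times_M Y$ is not surjective onto the fourfold fibre product) and to the unitors. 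Relatedly, the underlying bundle of these morphisms cannot be $I_0$: the correct template is the one the paper already uses for the identity 1-morphism, $1_\CG = (1_{Y\times_M Y}, L, p_{013}^*\mu^{-1}\circ p_{023}^*\mu)$. That is, take $Z = Y_{123}\times_M Y_{123}$ with the identity map, take $E$ to be the pullback of $L_1\otimes L_2\otimes L_3$ to $Z$ (so that, as for $1_\CG$, the morphism is parallel because $\curv(L_1\otimes L_2\otimes L_3) = -\delta(B_1+B_2+B_3)$), and let $\alpha$ be assembled from the $\mu_i$ together with the canonical rebracketing of vector bundles. With this correction your argument goes through; the verification of the axioms then indeed reduces, after passing to common refinements as in Definition~\ref{def:2mp of BGrbs}, to naturality and coherence of the canonical isomorphisms of pullbacks and tensor products of bundles.
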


Given a vector bundle $E \to M$, we denote its dual vector bundle by $E^\vee$.
Given a morphism $\psi \colon E \to F$ of vector bundles, we denote its dual morphism by $\psi^\vee \colon F^\vee \to E^\vee$.

\begin{definition}
\label{def:duals in Grb(M)}
Let $\CG_0, \CG_1 \in \Grb^\nabla(M)$ be bundle gerbes with connection on $M$, let $\CE, \CE' \colon \CG_0 \to \CG_1$, and let $\psi \colon \CE \to \CE'$ be a 2-morphism.
\begin{myenumerate}
\item The \emph{dual gerbe} of $\CG_0 = (\pi_0 \colon Y_0 \to M, L_0, \mu_0, B_0)$ is $\CG_0^\vee = (\pi_0 \colon Y_0 \to M, L^\vee_0, \mu^{-\vee}_0, -B_0)$.

\item The \emph{dual morphism} of $\CE = (\zeta \colon Z \to Y_0 \times_M Y_1, E, \alpha)$ is $\CE^\vee = (sw \circ \zeta \colon Z \to Y_1 \times_M Y_0, E^\vee, \alpha^{-\vee})$, where $sw \colon Y_0 \times_M Y_1 \to Y_1 \times_M Y_0$ is the swap of factors.
This defines a morphism $\CE^\vee \colon \CG_1^\vee \to \CG_0^\vee$.

\item The \emph{dual 2-morphism} $\psi^\vee \colon \CE'{}^\vee \to \CE^\vee$ is obtained by sending the bundle morphism $E \to E'$ underlying the 2-morphism $\psi$ to its dual bundle morphism $E'{}^\vee \to E^\vee$.
\end{myenumerate}
\end{definition}

\begin{remark}
There are several variations on the notion of duals of morphisms and 2-morphisms of gerbes, some of which only reverse the direction of morphisms but not of 2-morphisms.
Certain definitions of duals may be more adapted to particular problems, see e.g.~\cite{Waldorf--Thesis,Bunk--Thesis}.
\qen
\end{remark}

\begin{proposition}
\emph{\cite{Waldorf--Thesis}}
For any $\CG \in \Grb^\nabla(M)$, there are canonical parallel isomorphisms $\CG^\vee \otimes \CG \to \CI_0$ and $\CI_0 \to \CG \otimes \CG^\vee$, establishing $\CG^\vee$ as the categorical dual of $\CG$.
\end{proposition}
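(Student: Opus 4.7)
The plan is to construct the evaluation morphism $\ev \colon \CG^\vee \otimes \CG \to \CI_0$ explicitly, with underlying hermitean line bundle given essentially by $L$ itself, and then to obtain the second claimed isomorphism $\coev \colon \CI_0 \to \CG \otimes \CG^\vee$ from this construction via Theorem~\ref{st:Grb(M) is 2Cat}(iii) and the symmetry of $(\Grb^\nabla(M), \otimes, \CI_0)$ from Theorem~\ref{st:Grb(M) is sym mon 2Cat}. Writing $\CG = (\pi \colon Y \to M, L, \mu, B)$, the gerbe $\CG^\vee \otimes \CG$ has surjective submersion $Y \times_M Y \to M$, line bundle $p_1^* L^\vee \otimes p_2^* L$ on $(Y \times_M Y)^{[2]} = Y^{[4]}$, multiplication $\mu^{-\vee} \otimes \mu$, and curving $p_2^* B - p_1^* B$. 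For $\ev$ I would take the surjective submersion $Z = Y \times_M Y \to (Y \times_M Y) \times_M M$ given by the canonical identification, the hermitean line bundle $E = L \to Z$ with its given connection, and assemble the intertwiner $\alpha$ over $\cC Z_1 = Y^{[4]}$ from suitable pullbacks of $\mu$ and its dual.

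The hexagon coherence diagram for $\alpha$ over $\cC Z_2 = Y^{[6]}$ then decomposes into pullbacks of the associativity square~\eqref{eq:assoc for BGrb mu} for $\mu$, so its commutativity is a diagram chase. Parallelism reduces to checking that $\curv(L) = -\delta B$ (Definition~\ref{def:BGrb w conn}) matches the difference $B_{\mathrm{source}} - B_{\mathrm{target}} = p_2^* B - p_1^* B$ of the curvings of source and target of $\ev$; with the sign convention for $L$ versus $L^\vee$ the two agree. Because the underlying vector bundle of $\ev$ has rank one, Theorem~\ref{st:Grb(M) is 2Cat}(iii) yields that $\ev$ is an invertible 1-morphism in $\Grb^\nabla(M)$. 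Composing its 2-inverse $\ev^{-1} \colon \CI_0 \to \CG^\vee \otimes \CG$ with the braiding $\CG^\vee \otimes \CG \to \CG \otimes \CG^\vee$ from Theorem~\ref{st:Grb(M) is sym mon 2Cat} produces the required $\coev \colon \CI_0 \to \CG \otimes \CG^\vee$.

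To establish $\CG^\vee$ as the categorical dual of $\CG$ one must verify the two triangle identities, i.e.\ that the composites
\begin{equation*}
	\CG \xrightarrow{\coev \otimes 1} \CG \otimes \CG^\vee \otimes \CG \xrightarrow{1 \otimes \ev} \CG
	\qandq
	\CG^\vee \xrightarrow{1 \otimes \coev} \CG^\vee \otimes \CG \otimes \CG^\vee \xrightarrow{\ev \otimes 1} \CG^\vee
\end{equation*}
are parallel 2-isomorphic to the respective identity 1-morphisms. Unwinding the definitions, each composite is a morphism whose underlying line bundle is a tensor contraction of two copies of $L$ with one copy of $L^\vee$ via the canonical evaluation pairing, and the required 2-isomorphism to the identity 1-morphism is dictated fiberwise by the associativity of $\mu$.

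The main obstacle is purely bookkeeping: because $\ev$ and $\coev$ are defined on different surjective submersions, each 2-morphism appearing in the triangle identities must be represented, in the sense of Definition~\ref{def:2mp of BGrbs}, on a common refining surjective submersion, typically a high-order fibered power of $Y$. Selecting this refinement and chasing the resulting diagram of pullbacks of $L$, $\mu$, and the canonical duality of $L$ is tedious, but the commutativity one ultimately needs is precisely a pullback of the coherence diagram~\eqref{eq:assoc for BGrb mu}, so no new geometric input is required beyond the data of $\CG$ itself.
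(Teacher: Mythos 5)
The paper itself offers no proof of this proposition; it is quoted verbatim from Waldorf's thesis. Your outline is essentially the construction carried out there: build $\ev$ explicitly over $Z = Y \times_M Y$ out of the gerbe's own line bundle and multiplication, note that its underlying bundle has rank one so that Theorem~\ref{st:Grb(M) is 2Cat}(3) makes it invertible, and obtain the second morphism from the first via the symmetric monoidal structure. So the strategy is sound and matches the cited source.

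There is, however, one concrete slip you should repair rather than wave at. With the paper's conventions (tensor product as in Definition~\ref{def:tensor in Grb(M)}, dual as in Definition~\ref{def:duals in Grb(M)}, and $\alpha \colon d_0^*E \otimes L_0 \to L_1 \otimes d_1^*E$ as in Definition~\ref{def:1mp of BGrbs}), the underlying bundle of $\ev \colon \CG^\vee \otimes \CG \to \CI_0$ must be $L^\vee$, not $L$. Indeed, at a point $\big((y_0,y_1),(y'_0,y'_1)\big)$ of $\cC Z_1$ the intertwiner with $E = L$ would have to be a map $L_{(y'_0,y'_1)} \otimes L^\vee_{(y_0,y'_0)} \otimes L_{(y_1,y'_1)} \to L_{(y_0,y_1)}$, and no such map can be assembled from $\mu$: writing $L_{(a,b)} \sim [b]-[a]$ (consistent by the cocycle structure), the source has class $2[y'_1]-2[y'_0]+[y_0]-[y_1]$ while the target has class $[y_1]-[y_0]$. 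Taking $E = L^\vee$ both sides become $[y_0]-[y_1]$ and $\alpha$ is built from $\mu$, $\mu^{-\vee}$ and the canonical pairing $L^\vee \otimes L \to \CN$. The same replacement is what makes the curvature count work: $\curv(L^\vee) = \delta B = d_0^*B - d_1^*B$ equals the curving of $\CG^\vee \otimes \CG$ minus that of $\CI_0$, whereas $\curv(L)$ has the opposite sign. (Your $E = L$ does give a parallel isomorphism, but of $\CG \otimes \CG^\vee \to \CI_0$.) Two smaller points: when you define $\coev$ as a 2-categorical inverse of $\ev$ composed with the braiding, you should observe that the inverse can be chosen parallel (its underlying bundle is $E^\vee$, whose curvature is $-\curv(E)$, which is exactly parallelism in the reverse direction); and since $\ev$ is invertible, $\CG^\vee$ is in fact a tensor-inverse of $\CG$ in $(\Grb^\nabla(M), \otimes, \CI_0)$, so the triangle identities can be arranged by a general argument about invertible objects, sparing you the direct coherence chase you describe in your last two paragraphs.
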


The following observation has important consequences for the existence of morphisms between gerbes (see Corollary~\ref{st:torsion problem}):

\begin{proposition}
\label{st:det trick}
If $\CE \colon \CG_0 \to \CG_1$ is a morphism in $\Grb(M)$ (or $\Grb^\nabla(M))$ with underlying vector bundle $E$, then the determinant bundle $\det(E)$ induces an isomorphism $\det(\CE) \colon \CG_0^{\otimes \rank(E)} \arisom \CG_1^{\otimes \rank(E)}$.
\end{proposition}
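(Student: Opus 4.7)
The plan is to apply the determinant functor fibrewise to the vector bundle data of the morphism $\CE$ and check that this produces a well-defined invertible morphism between the $r$-fold tensor powers, where $r = \rank(E)$.

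First, I would write out the data explicitly. Let $\CE = (\zeta \colon Z \to Y_0 \times_M Y_1,\, E,\, \alpha)$, so that $\alpha \colon d_0^*E \otimes L_0 \to L_1 \otimes d_1^*E$ is an isomorphism of hermitean vector bundles over $\cC Z_1$. For a line bundle $L$ and a vector bundle $V$ of rank $r$ on the same base, there is a canonical natural isomorphism $\det(V \otimes L) \cong \det(V) \otimes L^{\otimes r}$. Applying the determinant functor to $\alpha$ and using this identification therefore yields a line bundle isomorphism
\begin{equation}
	\det(\alpha) \colon d_0^*\det(E) \otimes L_0^{\otimes r} \arisom L_1^{\otimes r} \otimes d_1^*\det(E)
\end{equation}
over $\cC Z_1$. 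By Definition~\ref{def:tensor in Grb(M)}, the $r$-th tensor powers $\CG_i^{\otimes r}$ are represented by the line bundles $L_i^{\otimes r}$ equipped with the multiplication maps $\mu_i^{\otimes r}$ over the same simplicial manifolds, so $\det(\alpha)$ is of exactly the right type to serve as the $\alpha$-datum of a morphism $\det(\CE) \colon \CG_0^{\otimes r} \to \CG_1^{\otimes r}$ supported on the same surjective submersion $\zeta$ and carrying the underlying line bundle $\det(E)$.

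Next I would verify the coherence condition of Definition~\ref{def:1mp of BGrbs} for the new datum $(\zeta, \det(E), \det(\alpha))$ relative to $\mu_0^{\otimes r}$ and $\mu_1^{\otimes r}$. Since the determinant is a symmetric monoidal functor on the category of vector bundles, applying $\det$ to the coherence hexagon satisfied by $(E, \alpha)$ and using the natural isomorphism $\det(V \otimes L) \cong \det(V) \otimes L^{\otimes r}$ on each vertex reproduces, up to canonical rearrangements of tensor factors, the corresponding hexagon for $(\det(E), \det(\alpha))$ with $\mu_0, \mu_1$ replaced by $\mu_0^{\otimes r}, \mu_1^{\otimes r}$. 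In the connection case, the determinant bundle carries a canonical induced unitary connection, and since $\alpha$ is connection-preserving, so is $\det(\alpha)$; moreover the curvature of $\det(E)$ equals $r \cdot \tr_{\mathrm{rk}=1}(\curv(E))/r = \tr(\curv(E))$ tracked through the fake-curvature identity, so fake-curvature is preserved if it holds for $\CE$.

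Finally, since the underlying hermitean vector bundle of $\det(\CE)$ is the line bundle $\det(E)$, which has rank one, Theorem~\ref{st:Grb(M) is 2Cat}(3) directly provides invertibility of $\det(\CE)$ in $\Grb(M)$ (respectively $\Grb^\nabla(M)$). I expect the main obstacle to be purely notational: keeping track of the canonical isomorphisms $\det(V \otimes L) \cong \det(V) \otimes L^{\otimes r}$ on all the relevant triple overlaps so that the coherence diagram for $\det(\alpha)$ commutes on the nose rather than up to reordering. The argument itself is essentially the functoriality and symmetric-monoidality of the determinant applied to the existing coherence data of $\CE$.
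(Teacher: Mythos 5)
Your proposal is correct and follows essentially the same route as the paper, whose proof simply states that one can directly see that $\det(E)$ induces a morphism $\CG_0^{\otimes \rank(E)} \to \CG_1^{\otimes \rank(E)}$ and then invokes Theorem~\ref{st:Grb(M) is 2Cat}(3) for invertibility since $\rank(\det(E)) = 1$. You have merely filled in the details (the isomorphism $\det(V \otimes L) \cong \det(V) \otimes L^{\otimes r}$ and the coherence check) that the paper leaves implicit.
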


\begin{proof}
One can directly see that $\det(E)$ induces a morphism $\det(\CE) \colon \CG_0^{\otimes \rank(E)} \arisom \CG_1^{\otimes \rank(E)}$; since $\rank(\det(E)) = 1$, this is an isomorphism by Theorem~\ref{st:Grb(M) is 2Cat}(3).
\end{proof}

\begin{construction}
\label{cons:tensoring mps by VBuns}
Let $\CE \colon \CG_0 \to \CG_1$ be a morphism of bundle gerbes with connection on $M$, given as the tuple $\CE = (Z \to Y_0 \times_M Y_1, E, \alpha)$.
Further, let $F$ be a hermitean vector bundle with connection on $M$.
We can form a new morphism (omitting the pullback of $F$ along $Y_0 \times_M Y_1 \to M$)
\begin{equation}
\label{eq:tensoring mps by VBuns}
	F \otimes \CE \colon \CG_0 \to \CG_1\,,
	\qquad
	F \otimes \CE = (Z \to Y_0 \times_M Y_1, F \otimes E, 1_F \otimes \alpha)\,.
\end{equation}
If $\varphi \colon F \to F'$ is a morphism of hermitean vector bundles, and $\psi = [W \to Z \times_M Z', \psi] \colon \CE \to \CE'$ is a 2-morphism of bundle gerbes (with some $\CE' \colon \CG_0 \to \CG_1$), then we set
\begin{equation}
	\varphi \otimes \psi = [W \to Z \times_M Z', \varphi \otimes \psi] \colon F \otimes \CE \longrightarrow F' \otimes \CE'\,.
\end{equation}
This construction is compatible with the tensor product of vector bundles and composition; in other words, there is a (module) action
\begin{equation}
\label{eq:HVB action on Grb Homs}
	\otimes \colon \HVBdl^\nabla(M) \times \Hom_{\Grb^\nabla(M)}(\CG_0, \CG_1) \longrightarrow \Hom_{\Grb^\nabla(M)}(\CG_0, \CG_1)
\end{equation}
of the symmetric monoidal category $\HVBdl^\nabla(M)$ of hermitean vector bundles with connection on $M$ on the category of morphisms from $\CG_0$ to $\CG_1$, for any pair of gerbes with connection on $M$.
\qen
\end{construction}

\begin{remark}
The module action~\eqref{eq:HVB action on Grb Homs} is a categorified analogue of the fact that the set of morphisms $L_0 \to L_1$ between any line bundles on $M$ is a module over $C^\infty(M)$.
\qen
\end{remark}

\begin{construction}
\label{cons:HVBun enrichment}
There is a dual operation to the action~\eqref{eq:HVB action on Grb Homs}:
let $\CE, \CE' \colon \CG_0 \to \CG_1$ be morphisms between bundle gerbes on $M$.
After choosing a common refinement of the underlying surjective submersion, we may assume that $\CE$ and $\CE'$ are defined over the same surjective submersion $Z \to Y_0 \times_M Y_1$.
We then consider the $\Hom$ bundle $\ul{\Hom}(E,E')$, which comes with the isomorphism $\beta$ defined as the composition
\begin{equation}
\begin{tikzcd}[column sep=1.2cm, row sep=1cm]
	d_0^* \ul{\Hom}(E,E') \ar[r, "(-) \otimes 1_{L_0}", "\cong"'] \ar[d, dashed, "\beta"']
	& d_0^* \ul{\Hom}(E,E') \otimes \ul{\Hom}(L_0,L_0) \ar[r, "\cong"]
	& \ul{\Hom}(L_0 \otimes d_0^*E, L_0 \otimes d_0E') \ar[d, "{\ul{\Hom}(\alpha^{-1}, \alpha')}"]
	\\
	d_1^* \ul{\Hom}(E,E')
	& \ul{\Hom}(L_1, L_1) \otimes d_1^* \ul{\Hom}(E,E') \ar[l, "f \otimes \psi \mapsto f\psi", "\cong"']
	&  \ul{\Hom}(d_1^*E \otimes L_1, d_1^*E' \otimes L_1) \ar[l, "\cong"]
\end{tikzcd}
\end{equation}
This satisfies the cocycle identity, and thus the pair $(\ul{\Hom}(E,E'), \beta)$ induces a unique (up to canonical isomorphism) hermitean vector bundle with connection on $M$, which we denote by $\ul{\Hom}(\CE,\CE')$.
We refer to~\cite[Sec.~4.5]{BSS--HGeoQuan} and~\cite[Sec.~3.2]{Bunk--Thesis} for full details for this construction.
\qen
\end{construction}

\begin{proposition}
\label{st:2isos as sections}
\emph{\cite[Prop.~4.37]{BSS--HGeoQuan}, \cite[Cor.~3.67]{Bunk--Thesis}}
Let $\CE, \CE' \colon \CG_0 \to \CG_1$ be morphisms of gerbes with connection.
There is a canonical bijection (which is even an isomorphism of $C^\infty(M)$-modules)
\begin{equation}
\label{eq:2isos as sections}
	2\Hom_{\Grb^\nabla(M)}(\CE, \CE') \cong \Gamma \big( M; \ul{\Hom}(\CE,\CE') \big)
\end{equation}
between 2-morphisms $\CE \to \CE'$ and sections of the bundle $\ul{\Hom}(\CE,\CE')$.
Furthermore, under this isomorphism, parallel 2-morphisms correspond to parallel sections, and unitary isomorphisms correspond to unit-length sections.
\end{proposition}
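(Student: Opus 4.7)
The plan is to reduce the claim to descent for vector bundles with connection along the surjective submersion $Z \to M$ underlying a common refinement of $\CE$ and $\CE'$, exploiting the fact that the descent isomorphism $\beta$ in Construction~\ref{cons:HVBun enrichment} was set up precisely so that its invariance condition on sections is the commuting square in Definition~\ref{def:2mp of BGrbs}.

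First, I would pass to a common refinement. After pulling $E$ and $E'$ back to the fibre product $Z \times_{Y_0 \times_M Y_1} Z'$, and then to any surjective submersion $W \to Z \times_M Z'$ on which a given 2-morphism is represented, the equivalence relation in Definition~\ref{def:2mp of BGrbs} allows me to treat $\psi \colon \CE \to \CE'$ unambiguously as a morphism $\psi \colon E \to E'$ of hermitean vector bundles over a single surjective submersion $Z \to M$, and hence as a section of $\ul{\Hom}(E,E')$ over $Z$. Two representatives agree on a further common refinement, so this identification is well-defined; it is manifestly $C^\infty(M)$-linear.

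Next comes the core step. Chasing through the definition of $\beta$ shows that $\beta(\phi)$ is characterised by $1_{L_1} \otimes \beta(\phi) = \alpha' \circ (\phi \otimes 1_{L_0}) \circ \alpha^{-1}$. Consequently, the descent condition $d_1^*\psi = \beta(d_0^*\psi)$ is equivalent, after composing with $\alpha$ on the right and using that $L_1$ is a line bundle, to
\begin{equation}
	(1_{L_1} \otimes d_1^*\psi) \circ \alpha = \alpha' \circ (d_0^*\psi \otimes 1_{L_0})\,,
\end{equation}
which is literally the commuting square from Definition~\ref{def:2mp of BGrbs}. Applying descent for sections of hermitean vector bundles along $Z \to M$, the $\beta$-invariant sections of $\ul{\Hom}(E,E')$ over $Z$ are in bijection with sections of $\ul{\Hom}(\CE,\CE')$ on $M$, which establishes~\eqref{eq:2isos as sections} as an isomorphism of $C^\infty(M)$-modules.

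Finally, I would verify the extra structural statements. A 2-morphism is parallel iff $\psi \colon E \to E'$ is connection-preserving, i.e.\ a parallel section of $\ul{\Hom}(E,E')$ with its induced connection; because $\alpha$ and $\alpha'$ are connection-preserving, so is $\beta$, and hence parallel descent data descend to parallel sections of $\ul{\Hom}(\CE,\CE')$. For the unitarity claim, a unitary isomorphism $\psi$ is, fibrewise, a unitary map between hermitean vector spaces, equivalently a unit-length element of $\ul{\Hom}(E,E')$ with respect to its natural hermitean norm, and this pointwise condition is preserved by descent. The main obstacle, and really the only non-bookkeeping step, is the diagram chase identifying the 2-morphism hexagon with the descent equation for $\beta$; once this is done, the bijection~\eqref{eq:2isos as sections} and its compatibility with the extra structures follow from standard vector-bundle descent.
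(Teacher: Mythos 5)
Your proposal is correct and follows exactly the route the cited sources (\cite[Prop.~4.37]{BSS--HGeoQuan}, \cite[Cor.~3.67]{Bunk--Thesis}) take: the paper itself gives no proof here, but Construction~\ref{cons:HVBun enrichment} is set up precisely so that the descent equation $d_1^*\psi = \beta(d_0^*\psi)$ for sections of $\ul{\Hom}(E,E')$ is the commuting square of Definition~\ref{def:2mp of BGrbs}, and your reduction to vector-bundle descent along $Z \to M$, together with the refinement-independence argument, is the intended proof. The only point worth a second look is the unitary/unit-length correspondence, which requires the hermitean metric on $\ul{\Hom}(E,E')$ to be normalised so that unitaries have norm one; with that convention your argument goes through.
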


Let $\CE \colon \CG_0 \to \CG_1$ be a morphism of gerbes with connection.
Since $\alpha$ preserves connections (see Definition~\ref{def:1mp of BGrbs w conn}) the 2-form $\curv(E) + (B_1 - B_0)$ descends to a bundle-valued 1-form $\curv(\CE) \in \Omega^2(M;\ul{\End}(\CE))$; here, we have set $\ul{\End}(\CE) = \ul{\Hom}(\CE,\CE)$.

\begin{definition}
We call $\curv(\CE) \in \Omega^2(M;\ul{\End}(\CE))$ the \emph{curvature} of the morphism $\CE \colon \CG_0 \to \CG_1$.
\end{definition}

Note that if $\rank(E) = 1$, then $\ul{\End}(\CE)$ is trivial; it is a hermitean line bundle on $M$ with a canonical unit-length section given by the identity 2-isomorphism $1_\CE \colon \CE \to \CE$ under the isomorphism~\eqref{eq:2isos as sections}.

\begin{proposition}
\label{st:par up to connection shift}
If $\CE \colon \CG_0 \to \CG_1$ is an isomorphism, then there exists a unique 2-form $\rho \in \omega^2(M; \iu\RN)$ such that $\CE \colon \CG_0 \otimes \CI_\rho \to \CG_1$ is parallel.
\end{proposition}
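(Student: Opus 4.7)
\emph{Plan.} Since $\CE$ is an isomorphism in $\Grb^\nabla(M)$, Theorem~\ref{st:Grb(M) is 2Cat}(3) forces the underlying hermitean vector bundle $E$ to have rank one, so that $\ul{\End}(\CE)$ is a hermitean line bundle with connection on $M$. The identity 2-isomorphism $1_\CE \colon \CE \to \CE$ is parallel and unitary, hence by Proposition~\ref{st:2isos as sections} it defines a parallel, unit-length section of $\ul{\End}(\CE)$. This section yields a canonical connection-preserving isomorphism $\ul{\End}(\CE) \cong I_0$ of hermitean line bundles with connection on $M$. Under this identification, the bundle-valued 2-form $\curv(\CE) \in \Omega^2(M;\ul{\End}(\CE))$ becomes an ordinary $\iu\RN$-valued 2-form, which we denote $\widetilde{\rho} \in \Omega^2(M;\iu\RN)$. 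I claim that $\rho \coloneqq \widetilde{\rho}$ is the desired form.

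\emph{Existence.} Write $\CE = (\zeta \colon Z \to Y_0 \times_M Y_1, E, \alpha)$ and let $\pi_Z \colon Z \to M$ be the composite surjective submersion. The gerbe $\CG_0 \otimes \CI_\rho$ has surjective submersion $Y_0$, line bundle $L_0$, multiplication $\mu_0$ and curving $B_0 + \rho$; hence the very same triple $(\zeta, E, \alpha)$ defines a morphism $\CE \colon \CG_0 \otimes \CI_\rho \to \CG_1$ without any modification. By Definition~\ref{def:1mp of BGrbs w conn}, this new morphism is parallel precisely when
\begin{equation}
	\curv(E) \,=\, (B_0 + \rho) - B_1 \quad \text{in } \Omega^2(Z; \iu\RN)\,,
\end{equation}
equivalently when $\curv(E) + (B_1 - B_0) = \pi_Z^*\rho$. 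By the definition of $\curv(\CE)$ recalled just before the proposition, the left-hand side is the pullback along $\pi_Z$ of $\curv(\CE)$, and under the canonical trivialisation of $\ul{\End}(\CE)$ this pullback is precisely $\pi_Z^*\widetilde{\rho} = \pi_Z^*\rho$. Hence the fake curvature condition is verified.

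\emph{Uniqueness and main obstacle.} If $\rho'$ is any other 2-form with the same property, subtracting the two fake curvature conditions on $Z$ gives $\pi_Z^*\rho = \pi_Z^*\rho'$; since $\pi_Z \colon Z \to M$ is a surjective submersion, pullback of forms is injective, forcing $\rho = \rho'$. The only delicate point in the argument is verifying that the trivialisation of $\ul{\End}(\CE)$ by $1_\CE$ identifies the intrinsic bundle-valued curvature $\curv(\CE)$ with the descended $\iu\RN$-valued 2-form produced from $\curv(E) + (B_1 - B_0)$ without any spurious sign or connection term; this is a direct bookkeeping check using Construction~\ref{cons:HVBun enrichment} together with the fact that $1_\CE$ is a parallel unitary section, so that the induced isomorphism $\ul{\End}(\CE) \cong I_0$ preserves connections.
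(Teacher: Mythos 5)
Your proof is correct and follows essentially the same route as the paper: both arguments reduce to the observation that, for rank-one $E$ with connection-preserving $\alpha$, the 2-form $\curv(E) + (B_1 - B_0)$ on $Z$ descends to a unique 2-form on $M$ (the paper justifies this via Lemma~\ref{st:Cech coho on ssub}, you via the canonical trivialisation of $\ul{\End}(\CE)$ by the parallel unit-length section $1_\CE$; these amount to the same descent statement), and that this descended form is the required $\rho$. One small remark: your sign $\rho = +\curv(\CE)$ is the one consistent with the statement as written ($\CE \colon \CG_0 \otimes \CI_\rho \to \CG_1$ parallel), whereas the paper's concluding parenthetical records $\rho = -\curv(\CE)$ because there it shifts the curving of $\CG_1$ rather than that of $\CG_0$.
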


\begin{proof}
By Theorem~\ref{st:Grb(M) is 2Cat}(3), we have $\rank(E) = 1$.
The fact that $\alpha$ is parallel implies that $\delta(\curv(E) - (B_1 - B_0)) = 0$.
The claim then follows from Lemma~\ref{st:Cech coho on ssub}:
there exists (a unique) $\rho \in \Omega^2(M; \iu\RN)$ such that the pullback of $\rho$ to $Z$ equals $-(\curv(E) + (B_1 - B_0))$.
(Note that we precisely have $\rho = - \curv(\CE)$ and that $\CG_1 \otimes \CI_\rho$ can be identified with the gerbe $(Y_1 \to M, L_1, \mu_1, B_1 + \rho)$.)
\end{proof}

\begin{remark}
Using a (pseudo-)Riemannian metric on $M$ and the structure on $\End(\CE)$ induced from $\ul{\End}(E)$ one can now define a Yang-Mills functional on the connections on $\CE$ compatible with those on $\CG_0$ and $\CG_1$, using the curvature $\curv(\CE)$.
This defines \emph{twisted Yang-Mills theory}, which, to our knowledge, has so far not been investigated.
Physically, it describes Yang-Mills theory on the world-volume of D-branes in string theory in the presence of non-trivial Kalb-Ramond charge.
Note that $\curv(\CE)$ (or its trace) does \emph{not} have integer cycles in general.
\qen
\end{remark}

Using the above techniques one can prove the following results:
for $\eta \in \Omega^2(M; \iu\RN)$ with integer cycles, let $\HLBdl^\nabla_\eta(M)$ be the groupoid of hermitean line bundles on $M$ with connection of curvature $\eta$ and their unitary parallel isomorphisms.
For bundle gerbes $\CG_0, \CG_1 \in \Grb^\nabla(M)$, let \smash{$\Iso^{par}_{\Grb^\nabla(M)}(\CG_0, \CG_1)$} denote the groupoid of parallel gerbe isomorphisms and their unitary parallel 2-isomorphisms.
Finally, recall the trivial gerbe with connection $\CI_\rho$ from Example~\ref{eg:CI_rho}.

\begin{proposition}
\emph{\cite{Waldorf--More_morphisms,Bunk--Thesis}}
For any $\rho, \rho' \in \Omega^2(M; \iu\RN)$, there are equivalences of categories
\begin{align}
	\Hom_{\Grb^\nabla(M)}(\CI_\rho, \CI_{\rho'}) &\simeq \HVBdl^\nabla(M)\,,
	\\*
	\Iso^{par}_{\Grb^\nabla(M)}(\CI_\rho, \CI_{\rho'}) &\simeq \HLBdl^\nabla_{\rho'-\rho}(M)\,.
\end{align}
\end{proposition}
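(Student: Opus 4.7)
The plan is to construct the equivalences explicitly by descent along the underlying surjective submersions, using that both $\CI_\rho$ and $\CI_{\rho'}$ are built over $1_M \colon M \to M$. First I would unpack what a $1$-morphism $\CE \colon \CI_\rho \to \CI_{\rho'}$ really is: since $Y_0 \times_M Y_1 = M \times_M M = M$, the datum reduces to a surjective submersion $\zeta \colon Z \to M$, a hermitean vector bundle with connection $E \to Z$, and an isomorphism $\alpha \colon d_0^*E \otimes I_0 \to I_0 \otimes d_1^*E$ over $\cC Z_1$. The trivial line bundles drop out, leaving a connection-preserving $\alpha \colon d_0^*E \to d_1^*E$, and the coherence diagram in Definition~\ref{def:1mp of BGrbs} becomes exactly the cocycle condition for descent of a hermitean vector bundle with connection along $\zeta$.

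Next I would build the functor $\Phi \colon \Hom_{\Grb^\nabla(M)}(\CI_\rho, \CI_{\rho'}) \to \HVBdl^\nabla(M)$ by sending $\CE$ to the bundle $F_\CE \in \HVBdl^\nabla(M)$ obtained from $(E,\alpha)$ by descent along $\zeta$, which is available because $\zeta$ is a surjective submersion. On a 2-morphism $[w \colon W \to Z \times_M Z', \psi]$, compatibility with $\alpha,\alpha'$ provides a descent datum for a bundle morphism $F_\CE \to F_{\CE'}$, and the equivalence relation built into Definition~\ref{def:2mp of BGrbs} (via common refinements) is precisely what makes this well-defined. A quasi-inverse $\Psi$ sends $F \in \HVBdl^\nabla(M)$ to the morphism $(1_M,\, F,\, 1_F)$ and a bundle morphism to the analogous tuple; one checks that $\Phi \circ \Psi = 1$ strictly and that $\Psi \circ \Phi$ is naturally isomorphic to the identity via the canonical descent isomorphism on each representative, appealing to the fact stated in the remark after Theorem~\ref{st:Grb(M) is 2Cat} that any morphism is 2-isomorphic to one whose surjective submersion is $1_{Y_0 \times_M Y_1}$.

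For the second equivalence I would restrict along both sides. By part~(3) of Theorem~\ref{st:Grb(M) is 2Cat}, $\CE$ is invertible exactly when $\rank(E) = 1$, which under $\Phi$ means that $F_\CE$ is a hermitean line bundle. The parallel condition $\curv(E) = B_0 - B_1$ in Definition~\ref{def:1mp of BGrbs w conn}, descended along $\zeta$ using the acyclicity of $\Omega^2(\cC Z \to M)$ from Lemma~\ref{st:Cech coho on ssub}, fixes $\curv(F_\CE) = \rho' - \rho$ (up to the sign convention of the fake curvature condition). Finally, unitary parallel 2-morphisms on the gerbe side correspond under $\Phi$ to connection-preserving isomorphisms on the bundle side, compatibly with Proposition~\ref{st:2isos as sections}, giving the second equivalence.

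The main obstacle is the bookkeeping of the descent: one must verify carefully that $\alpha$ is connection-preserving exactly when the descended bundle $F_\CE$ inherits a well-defined connection, and that the equivalence relation on representatives in Definition~\ref{def:2mp of BGrbs} matches well-definedness of descended bundle morphisms under common refinements of surjective submersions. These are standard applications of descent for hermitean vector bundles with connection along surjective submersions, but carrying them out while keeping track of the trivial curvings $\rho,\rho'$ and the various pullbacks is where most of the actual work resides.
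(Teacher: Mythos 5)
The paper does not actually prove this proposition---it only cites \cite{Waldorf--More_morphisms,Bunk--Thesis}---and your descent argument is precisely the proof given in those references: over the identity submersions underlying $\CI_\rho$ and $\CI_{\rho'}$, a morphism is literally a descent datum $(E,\alpha)$ for a hermitean vector bundle with connection along $\zeta\colon Z\to M$, the coherence diagram is the cocycle condition, and effectiveness of descent (together with the normalisation of representatives of $2$-morphisms) gives the equivalence, with Theorem~\ref{st:Grb(M) is 2Cat}(3) and the fake-curvature condition cutting out the second one. The only point to watch is the sign you already flag: with the paper's literal conventions ($\curv(L)=-\delta B$ and parallelness meaning $\curv(E)=B_0-B_1$), the descended bundle of a parallel isomorphism has curvature $\rho-\rho'$, so the stated identification with $\HLBdl^\nabla_{\rho'-\rho}(M)$ is realised by passing to the dual bundle (equivalently via the $\ul{\Hom}$ construction, cf.\ Proposition~\ref{st:existence of trivs}(2)), which does not affect the substance of your argument.
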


\subsection{Deligne cohomology and the classification of gerbes with connection}
\label{sec:Deligne coho and classification}

Consider a manifold $M$, and suppose we are given a \emph{differentiably good} open covering $\scU = \{U_a\}_{a \in \Lambda}$ of $M$.
Here, differentiably good means that each finite intersection of the patches $U_a$ is either empty or diffeomorphic to $\RN^m$ with $m = \dim(M)$.
Recall from Example~\ref{eg:ssub from opcov} that $\scU$ induces a surjective submersion $\cC \scU_0 \to M$ whose \v{C}ech nerve agrees with the usual \v{C}ech nerve of the open covering $\scU$.
In order to give a gerbe with connection defined over this surjective submersion, we have to specify a line bundle $L \to \cC \scU_1$.
However, since $\cC \scU_1 = \coprod_{a,b \in \Lambda} U_{ab}$ is a disjoint union of manifolds diffeomorphic to $\RN^m$, we may assume without  loss of generality that $L = I_A$, i.e.~that $L$ is the trivial line bundle with connection given by some $A \in \Omega^1(\cC\scU_1; \iu\RN)$.
We further have to specify a parallel bundle isomorphism $\mu \colon d_0^*L \otimes d_2^*L \to d_1^*L$; since $L = I_A$, such an isomorphism is equivalent to a smooth function $g \colon \cC \scU_2 \to \sfU(1)$ satisfying
\begin{equation}
	d_0^*A + d_2^*A = d_1^*A + g^*\mu_{\sfU(1)}\,,
	\quad
	\text{or equivalently}
	\quad
	\delta A = g^*\mu_{\sfU(1)}
\end{equation}
in $(\Omega^1(\cC \scU_1; \iu\RN), \delta)$.
The coherence condition~\eqref{eq:assoc for BGrb mu} for $\mu$ then translates to the condition that
\begin{equation}
	d_0^*g \cdot d_2^*g = d_1^*g\,,
	\quad
	\text{or equivalently}
	\quad
	\delta g = 0
\end{equation}
in $(C^\infty(\cC \scU; \sfU(1)), \delta)$, the \v{C}ech complex associated to $\cC \scU$ and the sheaf of $\sfU(1)$-valued functions.
(This \v{C}ech complex is obtained in a way analogous to Construction~\ref{cons:AltFace coch complex for csp VSp}.)
This is precisely the condition that $g$ be a $\sfU(1)$-valued \v{C}ech 2-cocycle on $M$ with respect to the open covering $\scU$.
Finally, we have to give a curving for our bundle gerbe, which consists of a 2-form $B \in \Omega^2(\cC \scU_0; \iu\RN)$ such that
\begin{equation}
	\dd A = - \delta B
\end{equation}
in $(\Omega^2(\cC \scU; \iu\RN), \delta)$, where we have used Definition~\ref{def:BGrb w conn} and Construction~\ref{cons:AltFace coch complex for csp VSp}.

A bundle gerbe as described here is often called a \emph{local bundle gerbe} or a \emph{Hitchin-Chatterjee gerbe}, after \cite{Chatterjee:Gerbs}.
Observe that every surjective submersion $Y \to M$ admits local sections defined over some open covering $\scU$ of $M$, and after possibly choosing a refinement we may assume that $\scU$ is differentiably good.
The local sections assemble to give a smooth map $s \colon \scU \to Y$, commuting with the maps to $M$.
This further induces a morphism $\cC s \colon (\cC \scU \to M) \longrightarrow (\cC Y \to M)$ of augmented simplicial manifolds (see Definition~\ref{def:spl_+ object}) with $s_{-1} = 1_M$.
This is the key step in proving:

\begin{lemma}
Every bundle gerbe with connection on $M$ is isomorphic (by a parallel isomorphism) to a local gerbe $(g,A,B)$ with connection, defined over some differentiably good open covering $\scU$ of $M$.
\end{lemma}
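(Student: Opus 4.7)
The plan is to use local sections of the surjective submersion $\pi$ to pull the gerbe data back to a differentiably good open cover and then trivialise the pulled-back line bundle on contractible patches. Let $\CG = (\pi \colon Y \to M, L, \mu, B)$ be given. First, I would choose a differentiably good open cover $\scU = \{U_a\}_{a \in \Lambda}$ of $M$ such that $\pi$ admits a smooth section $s_a \colon U_a \to Y$ on each patch; such a cover exists because surjective submersions locally admit smooth sections, and any such cover can be refined to be differentiably good. The sections assemble into a smooth map $s \colon \cC\scU_0 \to Y$ over $M$, which, as described in the excerpt preceding the lemma, organises into a morphism $\cC s \colon (\cC\scU \to M) \to (\cC Y \to M)$ of augmented simplicial manifolds.

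Next, I would form the pulled-back gerbe with connection
\[
    \CG' = \big( \cC\scU_0 \to M,\ (\cC s_1)^*L,\ (\cC s_2)^*\mu,\ (\cC s_0)^*B \big).
\]
Because $\cC s$ is simplicial, the coherence diagram~\eqref{eq:assoc for BGrb mu} and the curving condition $\curv(L) = -\delta B$ pull back to the corresponding identities for $\CG'$, so $\CG'$ is a well-defined gerbe with connection in the sense of Definitions~\ref{def:BGrb CG} and~\ref{def:BGrb w conn}. Moreover, the refinement map $s$ furnishes a canonical parallel isomorphism $\CG \arisom \CG'$ in $\Grb^\nabla(M)$, realised through the framework of Definition~\ref{def:1mp of BGrbs} by a suitable surjective submersion covering $Y \times_M \cC\scU_0$ together with the trivial hermitean line bundle $I_0$ and the identity structure isomorphism; Theorem~\ref{st:Grb(M) is 2Cat}(3) ensures that this is invertible.

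The remaining step is to trivialise the line bundle $(\cC s_1)^*L$ over $\cC\scU_1 = \coprod_{a,b} U_{ab}$. Since each non-empty $U_{ab}$ is diffeomorphic to $\RN^m$, it is contractible, and hence the hermitean line bundle $(\cC s_1)^*L$ admits a global unit-length section. Such a section encodes a parallel isomorphism $(\cC s_1)^*L \cong I_A$ for some $A \in \Omega^1(\cC\scU_1; \iu\RN)$. Transporting the multiplicative isomorphism $(\cC s_2)^*\mu$ along this trivialisation yields a parallel bundle isomorphism $I_{d_0^*A} \otimes I_{d_2^*A} \to I_{d_1^*A}$, which is equivalent to a smooth function $g \colon \cC\scU_2 \to \sfU(1)$ satisfying $\delta A = g^*\mu_{\sfU(1)}$. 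The coherence condition on $\mu$ translates into $\delta g = 0$, and the curving condition $\curv((\cC s_1)^*L) = -\delta (\cC s_0)^*B$ becomes $\dd A = -\delta B'$ with $B' = (\cC s_0)^*B$. Thus the triple $(g, A, B')$ defines a local gerbe with connection, and the trivialisation provides a parallel isomorphism $\CG' \arisom (g, A, B')$; composing with the isomorphism $\CG \arisom \CG'$ from the previous paragraph gives the required parallel isomorphism.

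The main obstacle I anticipate is making precise the canonical parallel isomorphism $\CG \arisom \CG'$ induced by the morphism of underlying surjective submersions. The data one must specify (a surjective submersion over $Y \times_M \cC\scU_0$, a hermitean line bundle on it, and a structure isomorphism satisfying the coherence diagram of Definition~\ref{def:1mp of BGrbs}) are essentially forced by naturality, but verifying the coherence condition using the simplicial compatibilities of $\cC s$ requires some bookkeeping. Everything else is routine: the argument reduces to unwinding definitions once this refinement principle is in place, together with the elementary observation that hermitean line bundles with connection on contractible manifolds are trivialisable.
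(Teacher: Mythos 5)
Your overall strategy coincides with the paper's: choose local sections of $\pi$ over a differentiably good open cover, pull the gerbe back along the induced morphism $\cC s$ of augmented simplicial manifolds, and trivialise the resulting line bundle over the contractible double intersections to read off $(g,A,B)$. The paper itself only sketches this argument, deferring details to the cited references, so in substance your write-up follows the intended route.

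The one step that fails as literally stated is your realisation of the refinement isomorphism $\CG \arisom \CG'$. You propose taking the trivial hermitean line bundle $I_0$ over (a cover of) $Y\times_M \cC\scU_0$ together with ``the identity structure isomorphism''. But with $E = I_0$, the structure map $\alpha$ of Definition~\ref{def:1mp of BGrbs} would have to be an isomorphism from the pullback of $(s\times s)^*L$ to the pullback of $L$ over $(\cC\scU_0\times_M Y)\times_M(\cC\scU_0\times_M Y)$, i.e.\ an identification $L_{(s(w_0),s(w_1))}\cong L_{(y_0,y_1)}$; these are genuinely different bundles, there is no identity map between them, and no canonical isomorphism exists that avoids using $L$ itself. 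The standard fix is to take $E$ to be the pullback of $L$ along $(w,y)\mapsto(s(w),y)$, so that $E_{(w,y)} = L_{(s(w),y)}$, and to build $\alpha$ from $\mu$ (which contracts both $L_{(s(w_1),y_1)}\otimes L_{(s(w_0),s(w_1))}$ and $L_{(y_0,y_1)}\otimes L_{(s(w_0),y_0)}$ onto the same bundle $L_{(s(w_0),y_1)}$). The coherence diagram for this $\alpha$ then follows from~\eqref{eq:assoc for BGrb mu}, the morphism has rank one and is therefore invertible by Theorem~\ref{st:Grb(M) is 2Cat}(3), and the computation $\curv(E)_{(w,y)} = \curv(L)_{(s(w),y)} = -(\delta B)_{(s(w),y)} = B_{s(w)} - B_y$ shows that it satisfies the fake curvature condition, hence is parallel. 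With that correction the remainder of your argument---trivialising $(\cC s_1)^*L$ over the contractible patches and reading off the local data---goes through as in the paper.
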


(This statement can be refined to allow one to fix $\scU$ arbitrarily and to also include morphisms and 2-morphisms is~\cite[Thm.~2.101]{Bunk--Thesis}.)
If we hope to classify gerbes with connection on $M$ up to parallel isomorphism, we may thus restrict ourselves to local gerbes.
For details, we refer to~\cite[Sec.~1.2]{Waldorf--Thesis}.
In a similar vein, one can show that parallel isomorphisms $(g,A,B) \to (g',A',B')$ of local gerbes over the same open covering $\scU$ are equivalently given by a function $h \colon \cC \scU_1 \to \sfU(1)$ together with a 1-form $C \in \Omega^1(\cC \scU_0; \iu\RN)$ satisfying
\begin{equation}
	g\, g'{}^{-1} = \delta h\,,
	\quad
	A - A' = h^*\mu_{\sfU(1)} - \delta C
	\qandq
	B - B' = \dd C\,.
\end{equation}
Finally, a parallel 2-isomorphism $(h,C) \to (h',C')$ corresponds to a function $\psi \colon \cC \scU_0 \to \sfU(1)$ with
\begin{equation}
	h\, h'{}^{-1} = - \delta \psi
	\qandq
	C - C' = \psi^*\mu_{\sfU(1)}\,.
\end{equation}
The local data for gerbes and their isomorphisms are conveniently described via the following enhancement of Construction~\ref{cons:AltFace coch complex for csp VSp}:

\begin{construction}
\label{cons:totalisation}
Suppose $C$ is a presheaf of cochain complexes (of abelian groups) on the category $\Mfd$ of smooth manifolds and smooth maps.
That is, $C$ is an assignment $M \to C(M)$ of a cochain complex to each manifold, and a morphism of complexes $f^* \colon C(M) \to C(M')$ to each smooth map $f \colon M' \to M$ such that $(f_1 \circ f_0)^* = f_0^* \circ f_1^*$ for each pair of composable smooth maps, and such that $1_M^*$ is the identity.
We further assume that $C^l(M) = 0$ for each $l > 0$ and each manifold $M$.

Suppose that $(X = \{X_k\}_{k \in \NN_0}, d_i, s_i)$ is a simplicial manifold.
We obtain a family of cochain complexes $\{C(X_k)\}_{k \in \NN_0}$, and like in Construction~\ref{cons:AltFace coch complex for csp VSp} the face and degeneracy maps of $X$ induce morphisms of cochain complexes $\partial_i = d_i^* \colon C(X_{k-1}) \to C(X_k)$ and $\sigma_i = s_i^* \colon C(X_{k+1}) \to C(X_k)$, satisfying the cosimplicial identities~\eqref{eq:cospl identities}.
We can now apply Construction~\ref{cons:AltFace coch complex for csp VSp} to each level of these chain complexes:
thereby we obtain, for each $l \in \ZN$, a cochain complex $(C^l(X), \delta)$ (which is trivial for each $l > 0$).
In fact, since this construction is compatible with the differential $d$ on $C$, we even obtain a \emph{double cochain complex} $(C(X), d, \delta)$.
(Note that this means, in particular, that $d \circ \delta = \delta \circ d$.)
Finally, we take the total complex $(\Tot(C(X)), \rmD)$ of $(C(X), d, \delta)$:
this is the (ordinary) cochain complex of abelian groups with
\begin{equation}
	\Tot \big( C(X) \big)^l = \bigoplus_{p+q = l} C^p(X_q)
	\qandq
	\rmD_{|C^p(X_q)} = d + (-1)^p \delta\,.
\end{equation}
This construction allows us to define not only \v{C}ech hypercohomology, but also derived closed forms (see Section~\ref{sec:shifted symplectic}).
\qen
\end{construction}

\begin{definition}
Let $M$ be a manifold and $n \in \NN_0$.
The \emph{degree-$n$ Deligne complex} of $M$ is the cochain complex of abelian groups
\begin{equation}
	\hat{\rmB}^n_\nabla \sfU(1)(M) = \big(
\begin{tikzcd}[column sep=0.75cm]
	0 \ar[r]
	& C^\infty \big( M,\sfU(1) \big) \ar[r, "\dd \log"]
	& \Omega^1(M; \iu\RN) \ar[r, "\dd"]
	& \cdots \ar[r, "\dd"]
	& \Omega^n(M; \iu\RN) \ar[r]
	& 0
\end{tikzcd}
	\big)
\end{equation}
where $\dd \log(g) = g^*\mu_{\sfU(1)}$, and where $C^\infty \big( M,\sfU(1) \big)$ lies in degree $-n$.
The \emph{degree-$n$ \v{C}ech-Deligne cohomology groups of $M$} are given by
\begin{equation}
	\check{\bbH}^k(M;\hat{\rmB}^n_\nabla\sfU(1))
	= \underset{\scU}{\colim}\ \rmH^k \big( \Tot( \hat{\rmB}^n_\nabla \sfU(1) (\cC\scU)), \rmD \big)\,,
\end{equation}
where $\scU$ runs over all open coverings of $M$.
The \emph{$n$-th differential cohomology group of $M$} is
\begin{equation}
	\hat{\rmH}^n(M;\ZN) = \check{\bbH}^0(M;\hat{\rmB}^{n-1}_\nabla\sfU(1))\,.
\end{equation}
\end{definition}

\begin{remark}
There is a way of defining Deligne cohomology without using \v{C}ech nerves; this uses the concept of \emph{hypercohomology} of complexes of sheaves of abelian groups; for details and background, see e.g.~\cite{Brylinski:Book,Voisin:Hodge_and_Cplx_AG_I}.
\qen
\end{remark}

Note that there are slightly different conventions in some of the literature, amounting to a degree-shift of one in the definition of $\hat{\rmH}^n(M;\ZN)$.
The following general statement allows us to compute \v{C}ech-Deligne cohomology groups:

\begin{proposition}
\label{st:hypercoho via single covering}
\emph{\cite[Thm.~2.8.1]{EZT:Sheaf_Coho_to_AlgDR_Thm}}
Let $C$ be a complex of \emph{sheaves}%
\footnote{That is, each $C^k$ is a sheaf with respect to open coverings of any manifold.}
of abelian groups on $\Mfd$ such that $C^k = 0$ for each $k < 0$.
Let $\scU$ be an open covering of a manifold $M$ such that each $C^k$ induces an acyclic sheaf on each finite intersection $U_{a_0 \cdots a_m}$ of patches of $\scU$ (i.e.~the complex $C(U_{a_0 \cdots a_m})$ has non-trivial cohomology at most in degree zero).
Then there is a canonical isomorphism between $\rmH^n(\Tot(C(\cC \scU)), \rmD)$ and the hypercohomology of $C$ on $M$.
\end{proposition}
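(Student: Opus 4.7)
The plan is to evaluate the cohomology of a single triple complex in two ways, with one contraction recovering $\rmH^n(\Tot(C(\cC\scU)),\rmD)$ and the other recovering the hypercohomology $\bbH^n(M;C)$. First I would pick a Cartan--Eilenberg resolution $C^\bullet \to J^{\bullet,\bullet}$: a double complex of injective (hence flabby) sheaves of abelian groups on $M$, bounded below, such that each augmented row $C^p \to J^{p,\bullet}$ is an injective resolution and the whole construction is compatible with the internal differential $d$ of $C$. By the standard equivalence of definitions of hypercohomology, we have $\bbH^n(M;C) = \rmH^n\bigl(\Gamma(M,\Tot J)\bigr)$.

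I would then form the triple complex $K^{p,q,r} = J^{p,r}(\cC\scU_q)$ carrying three (anti)commuting differentials: $d$ in the $p$-direction, the resolution differential of $J$ in the $r$-direction, and the Čech coboundary $\delta$ in the $q$-direction. The core calculation is to evaluate $\rmH^n(\Tot K)$ via the two spectral sequences associated to filtering $\Tot K$ by $(p,q)$ and by $(p,r)$ respectively.

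In \emph{Route A} I contract the resolution direction first. For fixed $(p,q)$ the $r$-cohomology of $J^{p,\bullet}(U_{a_0\cdots a_q})$ is $\rmH^r(U_{a_0\cdots a_q};\,C^p)$, which by the acyclicity hypothesis on $C^p$ equals $C^p(U_{a_0\cdots a_q})$ for $r=0$ and vanishes for $r>0$. The spectral sequence degenerates at $E_1$ onto the double complex $\bigl(C^p(\cC\scU_q),\,d,\,\delta\bigr)$ and converges to $\rmH^n(\Tot(C(\cC\scU)))$. In \emph{Route B} I contract the Čech direction first. Flabbiness of each injective $J^{p,r}$ gives the classical Čech vanishing $\check{\rmH}^q(\scU;J^{p,r}) = 0$ for $q>0$ and $\Gamma(M,J^{p,r})$ for $q=0$, so that spectral sequence degenerates onto the single column $\Gamma(M,J^{p,\bullet})$ whose total cohomology is $\bbH^n(M;C)$. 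Both spectral sequences converge to the same graded object $\rmH^n(\Tot K)$, yielding the desired canonical isomorphism; canonicity is inherited from the functoriality of Cartan--Eilenberg resolutions up to quasi-isomorphism.

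The main obstacle is convergence bookkeeping: for arbitrary triple complexes the two spectral sequences need not converge to the cohomology of $\Tot K$, but in the present setup boundedness below of $C$ (and hence of $J$) together with the first-quadrant nature of each filtration ensures strong convergence in every degree. A smaller point is that Route A strictly uses sheaf-acyclicity of each individual $C^k$ on finite intersections; the parenthetical reformulation in the hypothesis is best read as shorthand for this, i.e.\ for the statement that the augmented Čech complex of each $C^k$ along $\scU$ is exact in positive degrees.
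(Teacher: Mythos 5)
The paper offers no proof of this proposition: it is imported verbatim from the cited reference and used as a black box, so there is nothing internal to compare your argument against. Your proof is the standard one for this comparison theorem --- a Cartan--Eilenberg resolution $C \to J^{\bullet,\bullet}$, the triple complex $K^{p,q,r} = J^{p,r}(\cC\scU_q)$, and the two filtrations, one collapsing the resolution direction via acyclicity of each $C^p$ on the finite intersections, the other collapsing the \v{C}ech direction via flabbiness of injective sheaves --- and it is sound. Your two caveats are also well placed: since $C^k = 0$ for $k<0$ and the \v{C}ech and resolution degrees are nonnegative, $K$ lives in the first octant, each total degree receives only finitely many summands, and both spectral sequences converge strongly; and the hypothesis genuinely required in Route A is the sheaf-acyclicity $\rmH^r(U_{a_0\cdots a_m}; C^p) = 0$ for $r>0$ of each individual level $C^p$, not the literal parenthetical condition on the cohomology of the complex of sections $C(U_{a_0\cdots a_m})$, which is a logically independent statement (your reading is clearly the intended one). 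The only detail worth adding is that $\cC\scU_q$ runs over all ordered $(q{+}1)$-tuples with repetitions, so the \v{C}ech complexes appearing here are the unnormalised ones; both the flabby-sheaf vanishing lemma and the acyclicity argument apply to these just as to the alternating complexes, so nothing breaks.
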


\begin{corollary}
Let $\scU$ be a differentiably good open covering of a manifold $M$.
Then, there is a canonical isomorphism
\begin{equation}
	\rmH^k \big( \Tot( \hat{\rmB}^n_\nabla \sfU(1) (\cC\scU)), \rmD \big)
	\cong \check{\bbH}^k(M;\hat{\rmB}^n_\nabla\sfU(1))\,.
\end{equation}
\end{corollary}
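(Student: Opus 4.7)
The plan is to deduce this as a direct application of Proposition~\ref{st:hypercoho via single covering}. Since the Deligne complex $\hat{\rmB}^n_\nabla \sfU(1)$ is concentrated in degrees $-n$ through $0$ rather than in nonnegative degrees, I would first shift: set $C = \hat{\rmB}^n_\nabla \sfU(1)[-n]$, which is a complex of sheaves of abelian groups on $\Mfd$ concentrated in degrees $0, 1, \ldots, n$, with $C^0 = C^\infty(-,\sfU(1))$ and $C^k = \Omega^k(-;\iu\RN)$ for $1 \leq k \leq n$. The total-complex and sheaf-hypercohomology functors commute with such a shift, so an isomorphism in degree $k$ for $C$ yields the claimed isomorphism in degree $k - n$ for $\hat{\rmB}^n_\nabla \sfU(1)$.

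The main step is to verify the acyclicity hypothesis of Proposition~\ref{st:hypercoho via single covering}: each $C^k$ must be an acyclic sheaf on every finite intersection $U_{a_0 \cdots a_m}$. Since $\scU$ is differentiably good, each such intersection is either empty or diffeomorphic to $\RN^m$, so it suffices to show that each component sheaf has vanishing higher sheaf cohomology on $\RN^m$ (equivalently, on any contractible manifold diffeomorphic to $\RN^m$). For the sheaves $\Omega^k(-;\iu\RN)$ with $k \geq 1$, acyclicity is immediate: these are fine sheaves because $\Omega^k$ admits multiplication by smooth bump functions, hence partitions of unity, so all higher cohomology vanishes on any paracompact base.

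The only delicate case is $C^\infty(-,\sfU(1))$, which is the main obstacle since it is not a module over the sheaf of smooth functions in a way that makes it fine. I would handle it via the exponential short exact sequence of sheaves of abelian groups
\begin{equation}
	0 \longrightarrow \underline{\ZN} \longrightarrow C^\infty(-;\iu\RN) \overset{\exp}{\longrightarrow} C^\infty(-;\sfU(1)) \longrightarrow 0\,,
\end{equation}
and the associated long exact sequence in sheaf cohomology on $U_{a_0 \cdots a_m} \cong \RN^m$. The middle sheaf $C^\infty(-;\iu\RN)$ is fine, hence acyclic on $\RN^m$; the constant sheaf $\underline{\ZN}$ has vanishing higher cohomology on the contractible paracompact space $\RN^m$ (for instance by comparison with singular cohomology). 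The long exact sequence then forces $\rmH^q(U_{a_0 \cdots a_m}; C^\infty(-;\sfU(1))) = 0$ for all $q \geq 1$, as required.

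Having verified the hypothesis, Proposition~\ref{st:hypercoho via single covering} supplies a canonical isomorphism between $\rmH^k(\Tot(C(\cC\scU)),\rmD)$ and the sheaf hypercohomology of $C$ on $M$ for our fixed differentiably good $\scU$. The same argument applies to any refinement of $\scU$ by a differentiably good cover, and differentiably good coverings are cofinal among all open coverings of $M$; hence the colimit defining $\check{\bbH}^k(M;\hat{\rmB}^n_\nabla\sfU(1))$ stabilises at the value computed from $\scU$, and naturality of the isomorphism in Proposition~\ref{st:hypercoho via single covering} with respect to refinements ensures that the identification with sheaf hypercohomology is compatible along the colimit. Unshifting then yields the stated canonical isomorphism.
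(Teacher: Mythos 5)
Your proposal is correct and follows essentially the same strategy as the paper: shift the Deligne complex into non-negative degrees, verify the acyclicity hypothesis of Proposition~\ref{st:hypercoho via single covering} on the good intersections, and then compare with the colimit defining $\check{\bbH}^k(M;\hat{\rmB}^n_\nabla\sfU(1))$. The acyclicity step is handled the same way in both arguments (the paper compresses it into one sentence, invoking the exact sequence $\ZN \to \RN \to \sfU(1)$ for the $C^\infty(-,\sfU(1))$ level; your explicit use of fineness of $\Omega^k$ and the long exact sequence of the exponential sequence is exactly what is meant). The one place where you genuinely diverge is the final step: the paper identifies the single-cover computation with sheaf hypercohomology via Proposition~\ref{st:hypercoho via single covering} and then cites the general fact that \v{C}ech hypercohomology agrees with sheaf hypercohomology on paracompact spaces (Brylinski, Thm.~1.3.13), whereas you bypass that citation by a cofinality argument -- differentiably good coverings are cofinal in the directed system of all open coverings, the transition maps between any two of them become isomorphisms under the canonical identifications of Proposition~\ref{st:hypercoho via single covering}, and so the colimit stabilises at the value computed from $\scU$. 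Your route is more self-contained (it only needs naturality of the isomorphism in Proposition~\ref{st:hypercoho via single covering} under refinement, which holds but which you should at least flag as an input), while the paper's route is shorter at the cost of an external reference; both are valid.
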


\begin{proof}
Each of the level sheaves of $\hat{\rmB}^n_\nabla \sfU(1)[-n]$ (where $[-n]$ denotes the degree shift by $-n$) is acyclic on each $U_{a_0 \cdots a_m}$ by the Poincaré Lemma and the long exact sequence in cohomology associated to the short exact sequence $\ZN \to \RN \to \sfU(1)$.
Thus, the claim follows from Proposition~\ref{st:hypercoho via single covering} and the fact that on paracompact spaces \v{C}ech hypercohomology agrees with hypercohomology~\cite[Thm.~1.3.13]{Brylinski:Book}.
\end{proof}

From our investigation of local bundle gerbes, we see that a local gerbe with connection defined with respect to a differentiably good open covering $\scU$ of $M$ is the same as a 0-cocycle
\begin{equation}
	(g,A,B) \in Z^0 \big( \Tot( \hat{\rmB}^2_\nabla \sfU(1) (\cC\scU)), \rmD \big)\,,
\end{equation}
and two such local gerbes are isomorphic (via a parallel isomorphism) precisely if their cocycles differ by a coboundary.
One can check that restricting cocycles along refinements of differentiably good open coverings does not change their class in \v{C}ech-Deligne cohomology; thus we arrive a the following crucial classification theorem for gerbes with connection on a manifold $M$:

\begin{theorem}
\label{st:classification of Grbs}
\emph{\cite{Murray-Stevenson:Bgrbs--stable_isomps_and_local_theory}}
Let $M$ be a manifold.
There are isomorphisms of abelian groups
\begin{align}
	\bbD \colon \big( \Grb^\nabla(M), \otimes \big) / \text{\{par.~iso\}}
	&\arisom \check{\bbH}^2(M; \hat{\rmB}^2_\nabla \sfU(1) \big)
	\cong \hat{\rmH}^3(M; \ZN)\,,
	\\*[0.1cm]
	\rmD \colon \big( \Grb(M), \otimes \big) / \text{\{iso\}}
	&\arisom \rmH^2 \big( M; \sfU(1) \big)
	\cong \rmH^3(M; \ZN)\,.
\end{align}
\end{theorem}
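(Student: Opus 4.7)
The plan is to prove both isomorphisms by the \emph{local reduction} already set up in the lemma immediately before Construction~\ref{cons:totalisation}. By that lemma, every gerbe with connection is parallel isomorphic to a \emph{local} gerbe $(g,A,B)$ over some differentiably good open covering $\scU$ of $M$. The three consistency conditions derived there---$\delta g = 0$, $\delta A = g^*\mu_{\sfU(1)}$, and $\dd A = -\delta B$---are precisely the condition that the triple $(g,A,B) \in C^\infty(\cC\scU_2;\sfU(1)) \oplus \Omega^1(\cC\scU_1;\iu\RN) \oplus \Omega^2(\cC\scU_0;\iu\RN)$ be closed under the total differential $\rmD = \dd + (-1)^p \delta$ of the double complex obtained from $\hat{\rmB}^2_\nabla \sfU(1)$ applied to the \v{C}ech nerve $\cC\scU$. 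I would therefore define $\bbD(\CG)$ as the Deligne-cohomology class of any such local cocycle.

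Next I would verify that the equivalence relations match. The formulas recalled in the excerpt for parallel isomorphisms $(h,C)$ between local gerbes $(g,A,B)$ and $(g',A',B')$, and for parallel 2-isomorphisms $\psi$ between such $(h,C)$ and $(h',C')$, read exactly as $(g,A,B) - (g',A',B') = \rmD(h,C)$ and $(h,C) - (h',C') = \rmD\psi$. Hence two local gerbes are parallel isomorphic if and only if their cocycles differ by a coboundary, so $\bbD$ is well defined on parallel isomorphism classes. Independence of the refinement of $\scU$ follows because a common refinement of two differentiably good coverings induces compatible restriction maps on cocycles; moreover, by the corollary following Proposition~\ref{st:hypercoho via single covering} the colimit in the definition of $\check{\bbH}^2$ is already attained on any single differentiably good covering. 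The homomorphism property $\bbD(\CG \otimes \CG') = \bbD(\CG) + \bbD(\CG')$ is immediate from Definition~\ref{def:tensor in Grb(M)}, since the tensor product of local gerbes over a common refinement simply adds the three levels of cocycle data. Surjectivity is clear because any $\rmD$-cocycle $(g,A,B)$ can be read back as local data for a gerbe with connection, and injectivity is the observation that a cocycle $\rmD$-cohomologous to zero exhibits, via the same formulas, a parallel isomorphism to $\CI_0$. The identification $\check{\bbH}^2(M;\hat{\rmB}^2_\nabla\sfU(1)) \cong \hat{\rmH}^3(M;\ZN)$ is built into the definition of differential cohomology given just before the theorem.

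For the non-connection isomorphism $\rmD$ I would repeat the argument with the connection data forgotten: local gerbes without connection reduce to \v{C}ech 2-cocycles $g \colon \cC\scU_2 \to \sfU(1)$ modulo $\delta h$ for $h \colon \cC\scU_1 \to \sfU(1)$, giving $\check{\rmH}^2(M;\sfU(1))$, which equals $\rmH^2(M;\sfU(1))$ on paracompact manifolds. The final identification with $\rmH^3(M;\ZN)$ is the connecting homomorphism of the short exact sequence of sheaves $\ZN \to \RN \to \sfU(1)$, using acyclicity of the sheaf of smooth $\RN$-valued functions on differentiably good patches (compare the proof of the corollary after Proposition~\ref{st:hypercoho via single covering}). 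The main obstacle is purely bookkeeping: matching the sign conventions in $\rmD = \dd + (-1)^p \delta$ with those in the coherence diagram~\eqref{eq:assoc for BGrb mu} for $\mu$ and in the formulas for $(h,C)$ and $\psi$, so that ``closed cocycle'' and ``parallel isomorphism class of local gerbe'' correspond on the nose. Beyond that, the proof is a direct unwinding of the local data already made explicit in the paragraphs preceding the theorem.
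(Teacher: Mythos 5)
Your proposal is correct and follows essentially the same route the paper takes: reduce to local gerbes over a differentiably good covering, identify the local data $(g,A,B)$ with degree-zero cocycles in $\Tot(\hat{\rmB}^2_\nabla\sfU(1)(\cC\scU))$, match parallel isomorphisms and 2-isomorphisms with coboundaries, and invoke refinement-independence together with the corollary to Proposition~\ref{st:hypercoho via single covering}. The extra details you supply (additivity under $\otimes$, surjectivity/injectivity, and the analogous argument with connections forgotten) are exactly the points the paper leaves as routine.
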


The second isomorphism is obtained in complete analogy with the first, but replacing the complex $\hat{\rmB}^n_\nabla \sfU(1)$ by the complex $\hat{\rmB}^n \sfU(1) = \sfU(1)[n]$; for a manifold $M$, the complex $(\hat{\rmB}^n\sfU(1))(M)$ has $C^\infty(M, \sfU(1))$ in degree $-n$ and is trivial in all other degrees.
An analogous theorem for gerbes in the sense of Giraud has been proven in~\cite{Gajer:Geo_of_Deligne_Coho}.

\begin{definition}
For $\CG \in \Grb(M)$, we call the class $\rmD(\CG)$ the \emph{Dixmier-Douady class} of $\CG$.
For $\CG \in \Grb^\nabla(M)$ a gerbe with connection, we call the class $\bbD(\CG)$ associated to $\CG$ in $\hat{\rmH}^3(M;\ZN)$ the \emph{Deligne class} of $\CG$.
We also write $\rmD(\CG)$ for the Dixmier-Douady class of $\CG$ with its connection forgotten.
\end{definition}

We summarise the key technical properties of \v{C}ech-Deligne cohomology for the context of gerbes:

\begin{proposition}
\emph{\cite{Brylinski:Book}}
For any manifold $M$ and $n \geq 1$ there are the following exact sequences of abelian groups:
\begin{equation}
\label{eq:Sequences in DiffCoho}
\begin{tikzcd}[row sep=0.1cm, column sep={3cm,between origins}]
	0 \ar[r]
	& \Omega^{n-1}_{\cl, \ZN}(M; \iu\RN) \ar[r]
	& \Omega^{n-1}(M; \iu\RN) \ar[r, "\triv"]
	& \hat{\rmH}^n(M;\ZN) \ar[r, "\sfc"]
	& \rmH^n(M;\ZN) \ar[r]
	& 0
	\\
	& 0 \ar[r]
	& \rmH^{n-1}(M; \sfU(1)_\delta) \ar[r]
	& \hat{\rmH}^n(M; \ZN) \ar[r, "\curv"]
	& \Omega^n_{\cl, \ZN}(M; \iu\RN) \ar[r]
	& 0
\end{tikzcd}
\end{equation}
\end{proposition}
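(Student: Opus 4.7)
The plan is to derive both exact sequences from short exact sequences of complexes of sheaves on $\Mfd$, together with the induced long exact sequences in hypercohomology; Proposition~\ref{st:hypercoho via single covering}, applied to a differentiably good open covering of $M$, will identify hypercohomology with the \v{C}ech--Deligne groups appearing in the definition of $\hat{\rmH}^n(M;\ZN)$.

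For the second (curvature) sequence, I would start with the short exact sequence of sheaf complexes
\begin{equation}
	0 \longrightarrow \sfU(1)_\delta[n-1] \longrightarrow \hat{\rmB}^{n-1}_\nabla \sfU(1) \longrightarrow Q \longrightarrow 0\,,
\end{equation}
where $\sfU(1)_\delta[n-1]$ embeds as the sheaf of locally constant $\sfU(1)$-valued functions in degree $-(n-1)$. Since $\dd\log \colon C^\infty(-,\sfU(1)) \to \Omega^1_\cl$ is locally surjective with kernel $\sfU(1)_\delta$, the Poincar\'e lemma implies that $Q$ is quasi-isomorphic to $\Omega^n_\cl$ placed in degree $0$. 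The induced long exact sequence reads
\begin{equation}
	0 \to \rmH^{n-1}(M; \sfU(1)_\delta) \to \hat{\rmH}^n(M;\ZN) \xrightarrow{\curv} \Omega^n_\cl(M;\iu\RN) \xrightarrow{\partial} \rmH^n(M;\sfU(1)_\delta)\,,
\end{equation}
where injectivity on the left uses $\bbH^{-1}(M;Q) = 0$. The connecting map $\partial$ factors through the de Rham class followed by the Bockstein of $0 \to \ZN \to \RN \to \sfU(1)_\delta \to 0$, so $\ker \partial = \Omega^n_{\cl,\ZN}(M;\iu\RN)$, yielding the second sequence.

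For the first (characteristic class) sequence, I would use the forgetful projection $\pi \colon \hat{\rmB}^{n-1}_\nabla \sfU(1) \to \hat{\rmB}^{n-1}\sfU(1) = \sfU(1)[n-1]$ that kills all forms, whose kernel $K$ is the truncated de Rham complex $\Omega^1 \to \Omega^2 \to \cdots \to \Omega^{n-1}$ placed in degrees $-(n-2)$ through $0$. Since the sheaves $\Omega^p$ are acyclic (partitions of unity), one finds $\bbH^0(M;K) = \Omega^{n-1}(M;\iu\RN)/\dd\Omega^{n-2}(M;\iu\RN)$ and $\bbH^1(M;K) = 0$, while $\bbH^k(M;\sfU(1)[n-1]) = \rmH^{n-1+k}(M;\ZN)$ by the exponential sequence together with Theorem~\ref{st:classification of Grbs}. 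The long exact sequence then gives
\begin{equation}
	\rmH^{n-1}(M;\ZN) \to \tfrac{\Omega^{n-1}(M;\iu\RN)}{\dd\Omega^{n-2}(M;\iu\RN)} \xrightarrow{\triv} \hat{\rmH}^n(M;\ZN) \xrightarrow{\sfc} \rmH^n(M;\ZN) \to 0\,,
\end{equation}
and the image of $\rmH^{n-1}(M;\ZN)$ in $\Omega^{n-1}/\dd\Omega^{n-2}$ is precisely $\Omega^{n-1}_{\cl,\ZN}/\dd\Omega^{n-2}$ (sending an integer class to its de Rham representative). Splicing this with the tautological sequence $0 \to \Omega^{n-1}_{\cl,\ZN} \to \Omega^{n-1} \to \Omega^{n-1}/\Omega^{n-1}_{\cl,\ZN} \to 0$ produces the first exact sequence of the proposition.

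The main obstacle is identifying the abstract connecting and comparison homomorphisms in these long exact sequences with the geometrically defined maps $\triv$, $\sfc$, and $\curv$. Tracing the snake-lemma zig-zags through local \v{C}ech--Deligne representatives, one must in particular verify that the connecting map sends a closed form to its de Rham class modulo integers (producing the integrality condition on $\curv$) and that an $(n-1)$-form lifts to the trivial gerbe class exactly when it is closed with integer periods; both identifications rest on Lemma~\ref{st:Cech coho on ssub} and on the exponential short exact sequence $0 \to \ZN \to \RN \to \sfU(1)_\delta \to 0$.
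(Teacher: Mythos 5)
The paper does not prove this proposition at all --- it is quoted from Brylinski's book --- so there is nothing internal to compare against; your argument is correct and is essentially the standard proof from that reference. Both short exact sequences of sheaf complexes are the right ones: the quotient of $\hat{\rmB}^{n-1}_\nabla\sfU(1)$ by $\sfU(1)_\delta[n-1]$ is indeed quasi-isomorphic to $\Omega^n_\cl[0]$ via $\dd$ on the top level, and the kernel of the projection onto $\hat{\rmB}^{n-1}\sfU(1)=\sfU(1)[n-1]$ is the acyclic-sheaf complex $\Omega^1\to\cdots\to\Omega^{n-1}$, so the two long exact sequences collapse exactly as you describe. One terminological slip: the connecting map $\Omega^n_\cl(M;\iu\RN)\to\rmH^n(M;\sfU(1)_\delta)$ is the de Rham class followed by the map $\exp_*$ induced by $\RN\to\sfU(1)$, not by the Bockstein of $0\to\ZN\to\RN\to\sfU(1)_\delta\to 0$ (which raises degree and goes the other way around the exponential triangle); your identification of its kernel with $\Omega^n_{\cl,\ZN}(M;\iu\RN)$ via exactness of $\rmH^n(M;\ZN)\to\rmH^n(M;\RN)\to\rmH^n(M;\sfU(1)_\delta)$ is nevertheless correct. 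You might also note that for $n=1$ the complex $K$ is empty and the first sequence degenerates to the exponential sequence itself, so that edge case needs a separate (trivial) word.
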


Here, $\sfU(1)_\delta$ is the sheaf of locally constant $\sfU(1)$-valued functions.
Let us look at the sequences~\eqref{eq:Sequences in DiffCoho} for gerbes, i.e.~for $n=3$.
In the first sequence, the map $\triv$ sends a 2-form $\rho$ to the Deligne class $\bbD(\CI_\rho)$ of the trivial gerbe with connection $\rho$.
The second map $\sfc$, also called the \emph{characteristic class} or the \emph{Chern class}, takes the Deligne class $\bbD(G)$ of a gerbe $\CG$ with connection and sends it to the Dixmier-Douady class $\rmD(\CG)$ of the underlying gerbe without its connection.
The first map in the second sequence takes a \v{C}ech 2-cocycle $g \in Z^0(\Tot(\hat{\rmB}^2\sfU(1)_\delta(\cC \scU)),\rmD)$ of locally constant $\sfU(1)$-valued functions and sends it to the Deligne class of the local gerbe $(g,0,0)$.
The map $\curv$ sends $\bbD(\CG)$ to the 3-form $\curv(\CG)$.

\begin{proposition}
\label{st:diffcoho hexagon}
For any $n \in \NN_0$, the exact sequences~\eqref{eq:Sequences in DiffCoho} fit into the \emph{differential cohomology hexagon}~\cite{SiSu:Axiomatic_DiffCoho}, which is the commutative diagram of abelian groups
\begin{equation}
\label{eq:diffcoho hexagon}
\begin{tikzcd}[row sep={1.5cm,between origins}, column sep={2.25cm,between origins}]
	0 \ar[dr]
	& 
	& 
	& 
	& 0
	\\
	{}
	& \frac{\Omega^n(M; \iu\RN)}{\Omega^n_{\cl,\ZN}(M; \iu\RN)} \ar[rr, "\dd"] \ar[dr, "\triv"]
	& 
	& \Omega^{n+1}_{\cl,\ZN}(M; \iu\RN) \ar[ur] \ar[dr, "\widetilde{\dR}"]
	& 
	\\
	\rmH^n(M;\iu \RN_\delta) \ar[ur, "\dR"] \ar[dr, "\exp_*"']
	& 
	& \hat{\rmH}^{n+1}(M;\ZN) \ar[ur, "\curv"] \ar[dr, "\sfc"]
	& 
	& \rmH^{n+1}(M; \iu\RN_\delta)
	\\
	{}
	& \rmH^n(M; \sfU(1)_\delta) \ar[ur] \ar[rr, "\beta"]
	& 
	& \rmH^{n+1}(M; \ZN) \ar[ur, "(\iu \cdot (-))_*"'] \ar[dr]
	& 
	\\
	0 \ar[ur]
	& 
	& 
	& 
	& 0
\end{tikzcd}
\end{equation}
The morphisms $\dR$ and $\widetilde{\dR}$ arise from the de Rham isomorphism $\rmH^\bullet(M; \RN_\delta) \cong \rmH^\bullet_\dR(M;\RN)$ from sheaf to de Rham cohomology, and where $\beta$ is the usual Bockstein homomorphism.
\end{proposition}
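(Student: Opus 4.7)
The plan is to build the hexagon around the central vertex $\hat{\rmH}^{n+1}(M;\ZN)$ by combining the two short exact sequences of~\eqref{eq:Sequences in DiffCoho} (shifted by one), each of which contributes two of the four arrows incident to the centre. The four outer vertices then appear as natural images/cokernels and quotients of these sequences, and the four outer arrows of the hexagon ($\dd$, $\beta$, $\dR$, $\widetilde{\dR}$, together with $\exp_*$ and $(\iu\cdot(-))_*$) need to be inserted and shown to make every triangle and square commute.

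Concretely, I would first spell out all six vertices and seven arrows in terms of \v{C}ech--Deligne cocycles with respect to a fixed differentiably good open covering $\scU$ of $M$. A class in $\hat{\rmH}^{n+1}(M;\ZN)$ is represented by a tuple $(g,A_1,\dots,A_n)$ in $\Tot(\hat{\rmB}^n_\nabla \sfU(1)(\cC\scU))$, so that the curvature $\curv$ is read off as $\dd A_n$ descended to $M$, while the characteristic class $\sfc$ is the \v{C}ech class of $g$ under the Bockstein for $\ZN\to\RN\to\sfU(1)$. This makes the lower sequence of~\eqref{eq:Sequences in DiffCoho} literally the inclusion of flat (constant transition) cocycles followed by the $\curv$ map, and the upper sequence literally the map $\triv$ sending $B\in\Omega^n(M;\iu\RN)$ to the class of $(1,0,\ldots,0,B)$ followed by $\sfc$. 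Both triangles adjacent to the centre then commute by inspection: the outer arrows $\dd$ and $\beta$ are precisely the compositions through the centre, which makes the top and bottom triangles commute. The left triangle commutes because the de Rham isomorphism intertwines the image of a closed form in $\rmH^n(M;\iu\RN_\delta)$ with the exponentiated cocycle in $\rmH^n(M;\sfU(1)_\delta)$ (both produce the same flat Deligne class). The exactness of the two diagonals through $\hat{\rmH}^{n+1}(M;\ZN)$ is exactly the content of the two sequences in~\eqref{eq:Sequences in DiffCoho}, and the exactness at the remaining four vertices follows from the long exact sequences in sheaf cohomology associated with $\ZN\hookrightarrow\iu\RN\twoheadrightarrow\sfU(1)$ together with the standard identification of $\Omega^{n+1}_{\cl,\ZN}/\dd\Omega^n$ with the image of $\rmH^{n+1}(M;\ZN)\to\rmH^{n+1}(M;\iu\RN_\delta)$.

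The main obstacle is the commutativity of the right triangle, namely
\begin{equation}
	\widetilde{\dR} \circ \curv = (\iu\cdot(-))_* \circ \sfc\,,
\end{equation}
which is the assertion that the de Rham class of the curvature equals (up to the factor $\iu$) the image in real cohomology of the underlying integral Dixmier--Douady class. On cocycles this amounts to explicitly zig-zagging through the double complex $\Tot(\hat{\rmB}^n_\nabla\sfU(1)(\cC\scU))$: starting from the top-degree form $A_n$, repeatedly use the Poincar\'e lemma on the patches of $\scU$ together with $\delta A_k = \dd A_{k-1}+\cdots$ to descend $\curv$ step by step to a \v{C}ech $(n+1)$-cocycle of constant $\iu\RN$-valued functions, and verify that the $\ZN$-valued cocycle thereby produced is precisely a representative of $\sfc$, up to a sign fixed by the convention $\dd\log$. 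This is a bookkeeping exercise rather than a genuinely deep step, but it is where the comparison between the analytic (curvature) and the topological (Chern class) sides of the hexagon is made, so it deserves care.

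Once this right-hand commutativity is in hand, the remaining commutations are straightforward: the top and bottom triangles reduce to the definitions of $\dd$ and $\beta$ as the composites through $\hat{\rmH}^{n+1}(M;\ZN)$, and the flanking zeros at the far left and far right of the hexagon are the endpoints of the two exact sequences of~\eqref{eq:Sequences in DiffCoho}. A more conceptual alternative, which I would mention as a remark, is to observe that $\hat{\rmB}^n_\nabla\sfU(1)$ fits into a homotopy pullback square of complexes of sheaves with corners $\ZN[n+1]$, $\Omega^{\geq 0}_\cl$, and $\iu\RN[n+1]$; the hexagon is then the associated long Mayer--Vietoris sequence, and commutativity is automatic from the universal property of the pullback.
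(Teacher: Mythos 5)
The paper does not actually prove this proposition: it is stated as a known result, with the hexagon attributed to the axiomatic treatment of Simons--Sullivan and the two exact sequences to Brylinski, so there is no in-text argument to compare yours against. Judged on its own, your proposal is a correct and essentially complete proof along the standard cocycle-level route. You correctly identify that the two diagonals are exact by the preceding proposition, that the top and bottom triangles commute essentially by definition once $\triv$, $\curv$, $\sfc$ and the flat inclusion are written out on \v{C}ech--Deligne representatives $(g,A_1,\dots,A_n)$, and that the only genuinely non-formal step is the right-hand triangle $\widetilde{\dR}\circ\curv=(\iu\cdot(-))_*\circ\sfc$, which is exactly the \v{C}ech--de Rham zig-zag descending $\dd A_n$ through the double complex to the integral cocycle $\tfrac{1}{2\pi\iu}\delta\log g$. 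Two small points of care: first, the left-hand triangle $\triv\circ\dR=\iota\circ\exp_*$ is not quite ``by inspection'' --- exhibiting the Deligne coboundary between $(1,0,\dots,0,\eta)$ and $(\exp\tilde c,0,\dots,0)$ requires the same kind of zig-zag through local primitives that you spell out for the right triangle, so you should say so rather than appeal loosely to the de Rham isomorphism; second, the normalisation of $(\iu\cdot(-))_*$ versus the kernel $2\pi\iu\ZN$ of $\exp$ deserves one explicit line, since this is where stray factors of $2\pi$ typically enter. Your closing remark that the hexagon is the Mayer--Vietoris sequence of the homotopy pullback presentation of the Deligne complex is accurate and is in fact closer in spirit to the axiomatic reference the paper cites; it buys commutativity for free from the universal property, at the cost of setting up the homotopy-theoretic machinery, whereas your main argument stays entirely within the \v{C}ech--Deligne framework the paper has already built.
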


The top-left-to-bottom-right short exact sequence implies:

\begin{corollary}
Suppose $\CG_0, \CG_1 \in \Grb^\nabla(M)$ are two bundle gerbes with connection such that $\rmD(\CG_0) = \rmD(\CG_1)$; that is, $\CG_0$ and $\CG_1$ are isomorphic as gerbes \emph{without} connection (see Theorem~\ref{st:classification of Grbs}).
Then, there exists an isomorphism $\CE \colon \CG_0 \to \CG_1$ of gerbes with connection.
In particular, there is a \emph{parallel} isomorphism $\CE \colon \CG_0 \otimes \CI_{-\curv(\CE)} \longrightarrow \CG_1$ (by Proposition~\ref{st:par up to connection shift}).
\end{corollary}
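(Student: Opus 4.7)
The plan is to invoke the top-left-to-bottom-right short exact sequence of the differential cohomology hexagon (Proposition~\ref{st:diffcoho hexagon}) to upgrade the given equality of Dixmier-Douady classes into an equality of Deligne classes up to a trivial gerbe, then assemble an isomorphism of gerbes with connection from the pieces provided by the classification Theorem~\ref{st:classification of Grbs}, and finally read off the ``in particular'' clause from Proposition~\ref{st:par up to connection shift}.

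First I would consider the gerbe $\CG_1 \otimes \CG_0^\vee$. Since $\rmD$ is a tensor-product group homomorphism that carries duals to inverses (Theorem~\ref{st:classification of Grbs}), one has $\rmD(\CG_1 \otimes \CG_0^\vee) = \rmD(\CG_1) - \rmD(\CG_0) = 0$, and hence $\sfc(\bbD(\CG_1 \otimes \CG_0^\vee)) = 0$ in $\rmH^3(M;\ZN)$. Exactness of
\begin{equation}
\Omega^2(M;\iu\RN) \xrightarrow{\triv} \hat{\rmH}^3(M;\ZN) \xrightarrow{\sfc} \rmH^3(M;\ZN)
\end{equation}
then yields a $2$-form $\rho \in \Omega^2(M;\iu\RN)$ with $\bbD(\CI_\rho) = \bbD(\CG_1 \otimes \CG_0^\vee)$, and Theorem~\ref{st:classification of Grbs} produces a parallel isomorphism $\CG_1 \otimes \CG_0^\vee \cong \CI_\rho$. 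Tensoring with $\CG_0$ and cancelling $\CG_0^\vee \otimes \CG_0$ against $\CI_0$ via the parallel duality pairing gives a parallel isomorphism $\CE' \colon \CG_0 \otimes \CI_\rho \arisom \CG_1$.

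To land on source $\CG_0$ rather than $\CG_0 \otimes \CI_\rho$, the next step is to precompose $\CE'$ with any isomorphism $\CG_0 \arisom \CG_0 \otimes \CI_\rho$ in $\Grb^\nabla(M)$. Such a morphism is obtained by tensoring $1_{\CG_0}$ with any isomorphism $\CI_0 \arisom \CI_\rho$, and the latter exists because the final proposition of Section~\ref{sec:operations on BGrbs} identifies $\Hom_{\Grb^\nabla(M)}(\CI_0, \CI_\rho)$ with $\HVBdl^\nabla(M)$: by Theorem~\ref{st:Grb(M) is 2Cat}(3), any hermitean line bundle with connection on $M$---for instance the trivial one with zero connection---supplies such an isomorphism (generally not parallel). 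The composition is then the desired $\CE \colon \CG_0 \to \CG_1$ in $\Grb^\nabla(M)$.

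The ``in particular'' clause is immediate from Proposition~\ref{st:par up to connection shift} applied to $\CE$: it furnishes a unique $2$-form, namely $-\curv(\CE)$ by the computation in its proof, for which $\CE \colon \CG_0 \otimes \CI_{-\curv(\CE)} \to \CG_1$ is parallel. The only genuine input is the top exactness of the hexagon; no step poses a serious obstacle, and the main care needed is simply to track which isomorphisms are parallel and which are only connection-preserving.
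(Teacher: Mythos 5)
Your argument is correct and follows exactly the route the paper indicates: the paper offers no written proof beyond the remark that the corollary follows from the top-left-to-bottom-right short exact sequence of the differential cohomology hexagon, and your elaboration (passing to $\CG_1 \otimes \CG_0^\vee$, lifting $0 \in \rmH^3(M;\ZN)$ through $\sfc$ to find $\rho$ with $\bbD(\CI_\rho) = \bbD(\CG_1 \otimes \CG_0^\vee)$, then invoking Theorem~\ref{st:classification of Grbs} and Proposition~\ref{st:par up to connection shift}) is the intended argument, with all the parallel-versus-merely-connection-preserving bookkeeping handled correctly.
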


\begin{corollary}
\label{st:torsion problem}
If there exists any morphism $\CE \colon \CG_0 \to \CG_1$ between two gerbes in $\Grb^\nabla(M)$, then the difference $\rmD(\CG_1) - \rmD(\CG_0)$ is a torsion element in $\rmH^3(M;\ZN)$.
\end{corollary}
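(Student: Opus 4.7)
The plan is to deduce this corollary directly from the \emph{determinant trick} of Proposition~\ref{st:det trick} together with the classification Theorem~\ref{st:classification of Grbs}. Given any morphism $\CE \colon \CG_0 \to \CG_1$ with underlying hermitean vector bundle $E \to Z$ of rank $r = \rank(E)$, Proposition~\ref{st:det trick} provides an isomorphism
\begin{equation}
	\det(\CE) \colon \CG_0^{\otimes r} \arisom \CG_1^{\otimes r}
\end{equation}
in $\Grb(M)$ (forgetting connections if necessary). I would first observe that $r$ is a strictly positive integer: indeed $E$ is by definition a hermitean vector bundle on the non-empty manifold $Z$ obtained from the surjective submersion $Z \to Y_0 \times_M Y_1 \to M$, so its rank is a well-defined positive integer (on each connected component, and one can pass to a component-wise argument if $Z$ is disconnected with varying rank).

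Next, I would apply Theorem~\ref{st:classification of Grbs}: the Dixmier-Douady class $\rmD$ is a group homomorphism from $(\Grb(M), \otimes)/\text{iso}$ to $\rmH^3(M;\ZN)$. Applying $\rmD$ to the isomorphism $\det(\CE)$ therefore yields
\begin{equation}
	r \cdot \rmD(\CG_0) = \rmD(\CG_0^{\otimes r}) = \rmD(\CG_1^{\otimes r}) = r \cdot \rmD(\CG_1)
\end{equation}
in $\rmH^3(M;\ZN)$, so $r \cdot \bigl( \rmD(\CG_1) - \rmD(\CG_0) \bigr) = 0$, which is precisely the statement that $\rmD(\CG_1) - \rmD(\CG_0)$ is an $r$-torsion element.

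Since essentially the whole argument is a one-line consequence of two results already established in the excerpt, there is no real obstacle to overcome; the only point requiring a mild comment is the handling of rank when $Z$ is disconnected, which can be circumvented either by restricting to connected components of $M$ or by observing that $r$ may be chosen to be the least common multiple of the ranks of $E$ on the various components, still yielding a well-defined positive integer that annihilates $\rmD(\CG_1) - \rmD(\CG_0)$.
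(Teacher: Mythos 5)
Your argument is exactly the paper's proof, which simply states that the corollary is a direct consequence of Proposition~\ref{st:det trick} and Theorem~\ref{st:classification of Grbs}; you have merely spelled out the intermediate step $r \cdot \rmD(\CG_0) = \rmD(\CG_0^{\otimes r}) = \rmD(\CG_1^{\otimes r}) = r \cdot \rmD(\CG_1)$ that the paper leaves implicit. The additional remarks on positivity of the rank and on disconnected base are sensible but not needed beyond what the paper assumes.
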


\begin{proof}
This is a direct consequence of Lemma~\ref{st:det trick} and Theorem~\ref{st:classification of Grbs}.
\end{proof}

\begin{definition}
\label{def:trivialisations of gerbes}
Let $\CG \in \Grb^\nabla(M)$.
A \emph{trivialisation of $\CG$} is a parallel isomorphism $\CT \colon \CI_\rho \to \CG$ for some $\rho \in \Omega^2(M; \iu\RN)$.
We call a bundle gerbe $\CG \in \Grb^\nabla(M)$ \emph{trivialisable} if it admits a trivialisation.
If $\CT' \colon \CI_{\rho'} \to \CG$ is a second trivialisation, an \emph{isomorphism of trivialisations of $\CG$} is a unitary parallel 2-isomorphism $\psi \colon \CT \to \CT'$ in $\Grb^\nabla(M)$.
We let $\Triv(\CG)$ denote the groupoid of trivialisations of $\CG$.
\end{definition}

\begin{proposition}
\label{st:existence of trivs}
Let $\CG \in \Grb^\nabla(M)$ be a gerbe with connection on $M$.
\begin{myenumerate}
\item $\CG$ is trivialisable (i.e.~$\Triv(\CG) \neq \emptyset$) precisely if $\rmD(\CG) = 0$ in $\rmH^3(M;\ZN)$.

\item If $\CT \colon \CI_\rho \to \CG$ and $\CT' \colon \CI_{\rho'} \to \CG$ are trivialisations, then there exists a canonical isomorphism of trivialisations
\begin{equation}
	\ul{\Hom}(\CT,\CT') \otimes \CT \arisom \CT'
\end{equation}
(where we have used Construction~\ref{cons:HVBun enrichment}), and we have
\begin{equation}
	\curv \big( \ul{\Hom}(\CT,\CT') \big) = \rho' - \rho \quad \in \Omega^2_{\cl,\ZN}(M; \iu\RN)\,.
\end{equation}
\end{myenumerate}
\end{proposition}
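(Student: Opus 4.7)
The plan is to handle the two parts separately, using the classification theorem (Theorem~\ref{st:classification of Grbs}) and the differential cohomology hexagon (Proposition~\ref{st:diffcoho hexagon}) for part (i), and the enrichment/action machinery of Constructions~\ref{cons:tensoring mps by VBuns}--\ref{cons:HVBun enrichment} together with Proposition~\ref{st:par up to connection shift} for part (ii).

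\emph{Part (i).} The forward direction is immediate: a parallel isomorphism $\CT\colon \CI_\rho \to \CG$ is in particular an isomorphism in $\Grb(M)$, so by Theorem~\ref{st:classification of Grbs} we obtain $\rmD(\CG)=\rmD(\CI_\rho)=0$. For the converse, suppose $\rmD(\CG)=0$. Theorem~\ref{st:classification of Grbs} then yields an isomorphism $\CI_0 \to \CG$ in $\Grb(M)$. The corollary preceding Corollary~\ref{st:torsion problem} (equivalently, surjectivity of the Chern class $\sfc$ in the top-left-to-bottom-right exact sequence) lifts this to an isomorphism $\CE \colon \CI_0 \to \CG$ in $\Grb^\nabla(M)$, which need not be parallel. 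Proposition~\ref{st:par up to connection shift} provides a unique $\rho\in\Omega^2(M;\iu\RN)$ such that $\CE$, viewed as a morphism $\CI_0 \otimes \CI_\rho \cong \CI_\rho \to \CG$, is parallel. This is the desired trivialisation.

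\emph{Part (ii).} Both $\CT$ and $\CT'$ are invertible, so by Theorem~\ref{st:Grb(M) is 2Cat}(3) their underlying hermitean vector bundles $E_\CT,E_{\CT'}$ are of rank one. After passing to a common refinement of the surjective submersions supporting $\CT$ and $\CT'$, Construction~\ref{cons:HVBun enrichment} produces a hermitean line bundle with connection $\ul{\Hom}(\CT,\CT')$ on $M$, whose defining descent datum $\beta$ is built precisely from $\alpha_\CT$ and $\alpha_{\CT'}$. The fibrewise evaluation $\ul{\Hom}(E_\CT,E_{\CT'})\otimes E_\CT \to E_{\CT'}$ is compatible with $\beta$ by construction, hence defines a parallel 2-morphism $\ev\colon \ul{\Hom}(\CT,\CT') \otimes \CT \to \CT'$ via Construction~\ref{cons:tensoring mps by VBuns} and Proposition~\ref{st:2isos as sections}. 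Pointwise, $\ev$ is the canonical evaluation between one-dimensional $\Hom$-spaces, so it is invertible, giving the asserted isomorphism of trivialisations.

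\emph{Curvature computation.} The fake curvature condition (Definition~\ref{def:1mp of BGrbs w conn}) for the parallel isomorphisms $\CT$ and $\CT'$ reads
\begin{equation}
\curv(E_\CT) = \rho - B_\CG
\qandq
\curv(E_{\CT'}) = \rho' - B_\CG,
\end{equation}
on the correspondence space, where $B_\CG$ denotes the curving of $\CG$ pulled back appropriately. The standard Hom-connection on $\ul{\Hom}(E_\CT,E_{\CT'})$ then has curvature $\curv(E_{\CT'}) - \curv(E_\CT) = \rho' - \rho$, which is a basic form for the projection to $M$, and therefore descends by Construction~\ref{cons:HVBun enrichment} to $\curv \big( \ul{\Hom}(\CT,\CT') \big) = \rho' - \rho$. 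Integrality follows from the fact that $\ul{\Hom}(\CT,\CT')$ is a bona fide line bundle on $M$ together with the second short exact sequence of~\eqref{eq:Sequences in DiffCoho}.

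\emph{Main obstacle.} The principal technical difficulty is the bookkeeping of surjective submersions: one must first pass to a common refinement of those underlying $\CT$ and $\CT'$, verify that the fibrewise evaluation intertwines the two descent data $\alpha_\CT, \alpha_{\CT'}$ coherently enough to produce a well-defined 2-morphism rather than merely a bundle map upstairs, and then ensure that the resulting form $\rho' - \rho$ actually descends to $M$. Once this packaging is unwound---largely using the uniqueness statement for 2-morphism representatives noted after Theorem~\ref{st:Grb(M) is 2Cat}---the remaining verifications are routine.
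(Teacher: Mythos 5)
Your proposal is correct and follows essentially the same route as the paper's (much terser) proof: part (i) via the classification theorem and the exactness statements from the differential cohomology hexagon, and part (ii) by reading off the evaluation 2-isomorphism and the curvature directly from Construction~\ref{cons:HVBun enrichment} together with the fake curvature condition for the parallel morphisms $\CT$ and $\CT'$. You have merely filled in the details (sign conventions, the common refinement, and the descent of $\rho'-\rho$) that the paper leaves implicit.
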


\begin{proof}
Claim~(1) follows readily from Proposition~\ref{st:diffcoho hexagon}.
The isomorphism in claim~(2) is either obtained directly from the construction of $\ul{\Hom}(\CT,\CT')$, or, more abstractly, from the fact that $\ul{\Hom}$ provides an internal hom functor for the tensor product of gerbe morphisms~\cite{BSS--HGeoQuan,Bunk--Thesis}.
The curvature identity is again a direct consequence of Proposition~\ref{st:diffcoho hexagon}, or can be seen explicitly from the construction of $\ul{\Hom}(\CT, \CT')$ and the fact that $\CT$ and $\CT'$ are parallel morphisms of gerbes.
\end{proof}

\begin{remark}
Because of Corollary~\ref{st:torsion problem}, attempts to allow for infinite-dimensional bundles have been made (see e.g.~\cite[Sec.~7]{BCMMS} and~\cite{CW--Thom_iso+pfwd_in_twKT}), but Kuiper's Theorem (the contractibility of the unitary group of an infinite-dimensional separable Hilbert space) prevents this from yielding good categories of morphisms~\cite[Prop.~4.91]{Bunk--Thesis}.
One can consider Hilbert bundles of reduced structure group instead, but this leads to conflicts with tensor products.
Presumably, circumventing the torsion constraint will involve passing from bundles to sheaves of modules over $C^\infty(M)$, but we leave this to future work.
\qen
\end{remark}

\begin{remark}
Morphisms of gerbes are also called bundle gerbe (bi-)modules~\cite{Waldorf--Thesis}, or \emph{twisted vector bundles}~\cite{BCMMS}.
Bundle gerbe morphisms are related to twisted K-theory~\cite{BCMMS, CW--Thom_iso+pfwd_in_twKT}, at least when the bundle gerbe represents a torsion class in $\rmH^3(M;\ZN)$; otherwise, one has to use ($\ZN_2$-graded) $\infty$-dimensional Hilbert bundles (with reduced structure groups) in place of hermitean vector bundles as morphisms of bundle gerbes, which works fine for the purposes of twisted K-theory~\cite{BCMMS, CW--Thom_iso+pfwd_in_twKT, AS:Twisted_K-theory, Karoubi:Twisted_K-theory}.
\qen
\end{remark}

We conclude this section with a couple of remarks on Deligne complexes, homotopy theory, and higher gerbes.
These remarks are not relevant for the remaining sections of this article, but we hope that they provide an entry point to the extensive works on higher gerbes by Schreiber and collaborators (see, for instance,~\cite{Schreiber-DiffCoho_in_CohTopos,FSS:Higher_stacky_perspective,FSS:Cech_cocycles_for_diff_chars}).

\begin{definition}
\label{def:n-gerbe}
An $n$-gerbe on a manifold $M$ is a cocycle
\begin{equation}
	(g, A_1, \ldots, A_{n+1}) \in Z^0 \big( \Tot( \hat{\rmB}^n_\nabla \sfU(1) (\cC \scU)), \rmD \big)\,.
\end{equation}
\end{definition}

It would be tedious to define morphisms and higher morphisms of $n$-gerbes by hand.
However, there exists a general construction, called the \emph{Dold-Kan correspondence}, which turns a complex of abelian groups into a simplicial set (i.e.~a simplicial object in the category of sets and maps in the sense of Definition~\ref{def:spl object}).
The simplicial set we obtain under this correspondence from the complex $\tau_{\geq 0}(\Tot(\hat{\rmB}^n_\nabla\sfU(1)(\cC \scU), \rmD)$ can be viewed as an $(n{+}1)$-groupoid of $n$-gerbes with connection on $M$ and their parallel morphisms.
($\tau_{\geq 0}$ denotes the truncation to non-negative degrees.)
This is made precise by the fact that a particular type of simplicial sets, called \emph{Kan complexes}, provide a model for $\infty$-groupoids.
Note that it is essential to not evaluate $\hat{\rmB}^n_\nabla\sfU(1)$ on the manifold $M$ itself, but instead on the \v{C}ech nerve of a \emph{differentiably good} open covering $\scU$ of $M$.
There is a formal reason for this, stemming from homotopy theory (one needs to perform a cofibrant replacement of $M$ in a local projective model category of simplicial presheaves).
Abstract homotopy theory and $\infty$-categories are the basis for the theory of higher gerbes developed in~\cite{Lurie:HTT,Schreiber-DiffCoho_in_CohTopos}; good references on homotopy theory and higher categories include~\cite{Quillen:Homotopical_Algebra,Dwyer-Spalinksi,Hovey:Model_Categories,Lurie:HTT,Riehl:Cat_HoThy,Cisinski:Higher_Categories}.

\begin{remark}
For 2-gerbes, a hands-on definition in the spirit of Section~\ref{sec:spl view on LBuns} and~\ref{sec:Grbs and twVBuns} is still feasible; one follows the same principle as in those sections, using gerbes with connection as local transition functions to define 2-gerbes with connection.
For background and examples, we refer the reader to~\cite{Stevenson:Bundle_2-Gerbes,CJMSW:BGrbs_for_CS_and_WZW,Waldorf:Multiplicative_Gerbes}.
\qen
\end{remark}

\subsection{Lifting bundle gerbes and cup product bundle gerbes}
\label{sec:Examples of gerbes}

In this section, we exhibit two examples of bundle gerbes which appear frequently.
For various further examples of gerbes, we refer the interested reader to~\cite{Hitchin:Special_Lagr_Lectures,Chatterjee:Gerbs,Murray-Stevenson:Bgrbs--stable_isomps_and_local_theory,GR:Basic_Gerbe,Meinrenken:Basic_Gerbe,Waldorf:Multiplicative_Gerbes,Waldorf--Trangression_II, BS:Fluxes_Gerbes_2HSpaces} and references therein.

\vspace{-0.4cm}
\paragraph{Lifting bundle gerbes}

Let $\sfU(1) \to G \to H$ be a central extension of Lie groups.
In particular, $G \to H$ is a principal $\sfU(1)$-bundle.
The fact that it is also a group extension can be rephrased as follows:
there exists an isomorphism
\begin{equation}
	\mu^G \colon d_0^*G \otimes d_2^*G \arisom d_1^*G
\end{equation}
of $\sfU(1)$-bundles over $H^2$, which satisfies a verbatim analogue of equation~\eqref{eq:assoc for BGrb mu} over $H^3$.
Observe that these data look suspiciously like a bundle gerbe defined using the simplicial manifold $\rmB H$ in place of the \v{C}ech nerve of a surjective submersion $Y \to M$.

\begin{remark}
If we were able to establish the simplicial manifold $\rmB H$ as the the \v{C}ech nerve of a certain morphism $\pi \colon * = \rmB H_0 \to N$, we would see that a $\sfU(1)$-extension of $H$ is the same as a gerbe on $N$ with respect to $\pi$.
This can be made precise using a theory of principal $\infty$-bundles and their classifying spaces~\cite{Lurie:HTT,NSS:Pr-oo-buns_theory,Bunk:Pr_oo-bundles_and_String}; in this theory, a $\sfU(1)$-extension of $H$ is the same as a gerbe on the classifying space of $H$-bundles, and it is a shadow of this fact that we are observing here.
However, introducing principal $\infty$-bundles would lead us too far afield here.
\qen
\end{remark}

Suppose, $P \to M$ is a principal $H$-bundle on a manifold $M$.
This gives rise to a \v{C}ech nerve, which is the augmented simplicial manifold $\cC P \to M$.
By the principality of the $H$-action on $P$, this comes with a canonical smooth map $h \colon \cC P \to \rmB H$.
In particular, $h$ is fully determined by its level-one component $h_1 \colon \cC P_1 \to H$, satisfying $d_0^*h_1 \cdot d_2^*h_1 = d_1^*h_1$ (cf.~Section~\ref{sec:spl view on LBuns}).
Let $L = h_1^*G$ be line bundle associated to the pullback of the $\sfU(1)$-bundle $G \to H$ along the map $h_1$, and let $\mu = h_2^*\mu^G$.
Then, $\CG = (Y \to M, L, \mu)$ defines a bundle gerbe on $M$, called the \emph{lifting bundle gerbe of $P$}.

\begin{theorem}
\emph{\cite{Murray:Bundle_gerbs}}
The principal $H$-bundle $P \to M$ is a reduction of a principal $G$-bundle if and only if the gerbe $\CG$ is trivialisable.
\end{theorem}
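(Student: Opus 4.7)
The plan is to unpack both directions by hand, using the simplicial description of the lifting bundle gerbe together with the classification of trivialisations in terms of line bundles on $P$ equipped with a cocycle.

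First I would record the concrete shape of the data. A trivialisation $\CT \colon \CI_0 \to \CG$ is a parallel isomorphism and, up to refinement (using the remark after Definition~\ref{def:2mp of BGrbs}), it may be represented by a hermitean line bundle $T \to P$ together with an isomorphism
\begin{equation}
	\tau \colon d_0^*T \otimes L \arisom d_1^*T
\end{equation}
of line bundles over $P \times_M P = \cC P_1$, subject to a coherence condition over $\cC P_2$ governed by $\mu = h_2^*\mu^G$. Since $L = h_1^*G$ and $h_1 \colon \cC P_1 \to H$ is the division map $(p_0,p_1) \mapsto h_1(p_0,p_1)$ with $p_0 \cdot h_1(p_0,p_1) = p_1$, the fibre $L_{(p_0,p_1)}$ is canonically the $\sfU(1)$-torsor $G_{h_1(p_0,p_1)}$.

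For the forward direction, suppose $\tilde{P} \to M$ is a principal $G$-bundle with an identification $\tilde{P}/\sfU(1) \cong P$ compatible with $G \twoheadrightarrow H$. Let $T \to P$ be the hermitean line bundle associated with the principal $\sfU(1)$-bundle $\tilde{P} \to P$. For each $(p_0, p_1) \in \cC P_1$ and each $\tilde{p}_0 \in \tilde{P}$ over $p_0$, $g \in G_{h_1(p_0,p_1)}$, I would define $\tau(\tilde{p}_0 \otimes g) = \tilde{p}_0 \cdot g \in \tilde{P}$ over $p_1$, viewed as an element of $T_{p_1}$. The coherence with $\mu = h_2^*\mu^G$ amounts exactly to associativity of the $G$-action on $\tilde{P}$, which holds by assumption. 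This produces $(T,\tau)$ and hence a trivialisation of $\CG$, which by Proposition~\ref{st:existence of trivs}(1) shows $\rmD(\CG) = 0$.

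For the converse, suppose $(T, \tau)$ is a trivialisation. Let $\tilde{P} \to P$ be the principal $\sfU(1)$-bundle of unit-length vectors in $T$; the composition $\tilde{P} \to P \to M$ is then a principal $\sfU(1)$-bundle $\tilde{P} \to P$ sitting over a principal $H$-bundle $P \to M$. I would promote $\tilde{P}$ to a principal $G$-bundle by defining, for $\tilde{p} \in \tilde{P}_{p_0}$ and $g \in G$ with image $h \in H$,
\begin{equation}
	\tilde{p} \cdot g \coloneqq \tau \big( \tilde{p} \otimes g \big) \in T_{p_0 \cdot h}\,,
\end{equation}
where $g$ is viewed as an element of $L_{(p_0, p_0 \cdot h)} = G_h$. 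Smoothness and the fact that $\tilde{p} \cdot g$ has unit length (so lies in $\tilde{P}$) come from $\tau$ being a unitary isomorphism; principality over $M$ with the $H$-quotient identified with $P$ is immediate from the construction.

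The main obstacle, and the place where I would spend the most care, is associativity $(\tilde{p} \cdot g_1) \cdot g_2 = \tilde{p} \cdot (g_1 g_2)$ and the verification that the two constructions are mutually inverse up to canonical isomorphism. Both reduce to the single diagrammatic input that $\tau$ is compatible with $\mu = h_2^*\mu^G$ on $\cC P_2$: unwinding the definition, this coherence square is precisely the equation relating $(\tilde{p} \cdot g_1) \cdot g_2$ and $\tilde{p} \cdot (g_1 g_2)$ under the identifications $L_{(p_0,p_1)} = G_{h_1(p_0,p_1)}$ and $\mu^G \colon G_{h} \otimes G_{h'} \to G_{h h'}$. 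Once this identification is made carefully, checking that the two constructions pass to parallel 2-isomorphism classes of trivialisations and to isomorphism classes of $G$-lifts is routine, and it yields the claimed equivalence in both directions.
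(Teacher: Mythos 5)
Your argument is correct and is essentially the standard proof of this result (the paper itself does not prove it but only cites Murray, where the same identification of a trivialisation with a line bundle $T\to P$ whose circle bundle carries the $G$-action is used): the key point in both directions is exactly the one you isolate, namely that the coherence of $\tau$ with $\mu=h_2^*\mu^G$ is the associativity of the $G$-action on the circle bundle of $T$. The only blemish is cosmetic: with the paper's convention $d_0(p_0,p_1)=p_1$, $d_1(p_0,p_1)=p_0$ and $p_0\cdot h_1(p_0,p_1)=p_1$, your map $T_{p_0}\otimes L_{(p_0,p_1)}\to T_{p_1}$ is an isomorphism $d_1^*T\otimes L\to d_0^*T$, so the roles of $d_0^*$ and $d_1^*$ in your displayed formula for $\tau$ are interchanged.
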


If the principal $H$-bundle $P \to M$ carries a connection, one can extend the construction of the lifting gerbe to also include a connection~\cite{Gomi:Conns_on_lifting_gerbes}.

\vspace{-0.4cm}
\paragraph{Cup product bundle gerbes}

We now consider a manifold $M$, a hermitean line bundle $L \to M$ with connection, and a smooth map $f \colon M \to \bbS^1$.
We can understand $L$ as representing a class in $\hat{\rmH}^2(M;\ZN)$ and $f$ as representing a class in $\hat{\rmH}^1(M;\ZN)$ (since $\bbS^1 \cong \sfU(1)$ as manifolds).
From these data one can construct a gerbe with connection on $M$ which represents the cup-product $[L] \cup [f] \in \hat{\rmH}^3(M;\ZN)$~\cite{Johnson:Thesis}.
This construction proceeds as follows:
let $\RN \to \ZN$, $r \mapsto \exp(2\pi \iu\, r)$ denote the canonical $\ZN$-principal bundle over $\bbS^1$.
Let $p \colon Y = f^*\RN \to M$ denote the pullback of this bundle along $f$.
Then, there is an induced $\ZN$-action on $Y$, and there is a canonical diffeomorphism of simplicial manifolds (i.e.~a morphism of simplicial manifolds which is a diffeomorphism in each degree)
\begin{equation}
	(f^*\RN) \dslash \ZN \longrightarrow \cC (f^*\RN)\,,
\end{equation}
where we have used the notation from Example~\ref{eg:M//G as spl Mfd}.
An element in $((f^*\RN) \dslash \ZN)_n = (f^*\RN) \times \ZN^n$ is a tuple $(x,r,k_1, \ldots, k_n)$, where $x \in M$, $r \in \RN$, and $k_1, \ldots, k_n \in \ZN$, and where $f(x) = \exp(2\pi\iu\, r)$.
The above map sends $(x,r,k_1, \ldots, k_n)$ to $((x,r), (x,r + k_1), \ldots, (x,r+k_n)) \in (f^*\RN)^{[n+1]}$.
We define a line bundle with connection $(p^*L)^{-\ZN}$ on $((f^*\RN) \dslash \ZN)_1 = (f^*\RN) \times \ZN$ by
\begin{equation}
	(p^*L)^{-\ZN}_{|(f^*\RN) \times \{k\}} =
	\begin{cases}
		(p^*L^\vee)^k\,, & k > 0\,,
		\\
		(p^*L)^{|k|}\,, & k \leq 0\,,
	\end{cases}
\end{equation}
where for any line bundle with connection $J$, we understand $J^0$ to be the trivial line bundle with trivial connection.
Further, we define an isomorphism of line bundles over $((f^*\RN) \dslash \ZN)_2$:
\begin{equation}
	\mu \colon d_0^*(p^*L)^{-\ZN} \otimes d_2^*(p^*L)^{-\ZN} \longrightarrow d_1^*(p^*L)^{-\ZN}\,,
\end{equation}
consisting of the canonical isomorphisms $L^{k_1}_{|x} \otimes L^{k_2}_{|x} \longrightarrow L^{k_1 + k_2}_{|x}$, for $x \in M$, $k_1, k_2 \in \ZN$.
Finally, we define $B \in \Omega^2(f^*\RN; \iu\RN)$ as $B = r \cdot p^*\curv(L)$ (recall that $((f^*\RN) \times \ZN)_0 = f^*\RN$).
Then, we have
\begin{equation}
	\curv \big( (p^*L)^{-\ZN} \big)_{(x,r,k_1)}
	= - k_1 \, p^*\curv(L)_{|(x,r)}
	= B_{|(x,r)} - B_{|(x,r+k_1)}
	= - (\delta B)_{|(x,r,k_1)}\,.
\end{equation}
Thus, $\CG = (p \colon f^*\RN \to M, (p^*L)^{-\ZN}, \mu, B)$ defines a bundle gerbe with connection on $M$, called the \emph{cup product gerbe} of $L$ and $f$.
Using the identification $\bbS^1 \cong \sfU(1)$, its curvature is
\begin{equation}
	\curv(\CG) = f^*\mu_{\sfU(1)} \wedge \curv(L)\,.
\end{equation}

\section{Holonomy, field theory, and strings}
\label{sec:PT and field theory}

Vector bundles with connection have a parallel transport along smooth paths, producing holonomies around smooth loops.
Locally, parallel transports are built directly from connection 1-forms.
Connections on gerbes consist of a 1-form and a 2-form (see Section~\ref{sec:Deligne coho and classification}); one could therefore expect connections on gerbes to have holonomies around one- and two-dimensional objects.
This is indeed the case; such holonomies and parallel transports have been investigated, for instance, in~\cite{Gawedzki:Topological_Actions, Gawedzki-Reis:WZW-branes_and_gerbes, CJM--Holonomy_on_D-branes, MP:Hol_and_PT_for_AbGrbs, BS:Higher_gauge_theory, SW:PT_and_functors, MP--2D_holonomy, Waldorf:PT_in_Pr2Buns, BMS:Smooth_2Grp_Ext_and_GrbSym}.
Here, we provide a modern, field-theoretic perspective on these holonomies on surfaces with and without boundaries, using the results of Sections~\ref{sec:Gerbes}.

\subsection{Surface Holonomy}
\label{sec:Surface Holonomy}

Let $M$ be a manifold carrying a gerbe $\CG$ with connection.
Let $\Sigma$ be a closed%
\footnote{A manifold $N$ is \emph{closed} if it is compact and satisfies $\partial N = \emptyset$.}
oriented surface, and let $\sigma \colon \Sigma \to M$ be a smooth map.
By Theorem~\ref{st:Grb(M) is 2Cat}, we can pull $\CG$ back along $\sigma$ to obtain a gerbe $\sigma^*\CG$ with connection on $\Sigma$.
As $\rmH^3(\Sigma;\ZN) = 0$, by Proposition~\ref{st:existence of trivs} there exists a trivialisation $\CT \colon \CI_\rho \to \sigma^*\CG$ (see also Definition~\ref{def:trivialisations of gerbes}).
Again by Proposition~\ref{st:existence of trivs}, for any other trivialisation $\CT' \colon \CI_{\rho'} \to \sigma^*\CG$, we have $\rho' - \rho \in \Omega^2_{\cl,\ZN}(\Sigma; \iu\RN)$.
Thus, the following is well-defined:

\begin{definition}
\label{def:surface hol}
Let $M$ be a manifold and let $\CG \in \Grb^\nabla(M)$.
Let $\Sigma$ be a closed, oriented surface, let $\sigma \colon \Sigma \to M$ be a smooth map, and let $\CT \colon \CI_\rho \to \sigma^*\CG$ be \emph{any} trivialisation of $\sigma^*\CG$.
The \emph{(surface) holonomy of $\CG$ around $(\Sigma,\sigma)$} is
\begin{equation}
\label{eq:surface hol}
	\hol(\CG;\sigma) = \exp \bigg( - \int_\Sigma \rho \bigg) \quad \in \sfU(1)\,.
\end{equation}
\end{definition}

This definition of surface holonomy goes back to~\cite{CJM--Holonomy_on_D-branes}, which succeeded earlier constructions~\cite{Witten:Current_algebra,Alvarez:Top_Quant_and_Coho,Gawedzki:Topological_Actions,Gawedzki-Reis:WZW-branes_and_gerbes}.
Definition~\ref{def:surface hol} makes full use of the 2-categorical theory of gerbes and does not rely on extensions of $\sigma$ to 3-manifolds $N$ with $\partial N = \Sigma$, or on combinatorial decompositions of $\Sigma$.
In physics, it describes the \emph{Wess-Zumino-Witten action}, which is part of the action of bosonic and fermionic strings.
One now readily proves:

\begin{proposition}
Let $M$ be a manifold and let $\CG \in \Grb^\nabla(M)$.
\begin{myenumerate}
\item If $N$ is a compact oriented 3-manifold with $\partial N = \Sigma$ and $f \colon N \to M$ is a smooth map with $f_{|\partial N} = \sigma$, then
\begin{equation}
	\hol(\CG;\sigma) = \exp \bigg( - \int_N \curv(\CG) \bigg)\,.
\end{equation}

\item $\CG$ has trivial holonomy around every closed surface $(\Sigma, \sigma)$ if and only if $\CG$ is flat, i.e.~$\curv(\CG) = 0$.
\end{myenumerate}
\end{proposition}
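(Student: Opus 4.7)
The overall plan is to derive (1) by constructing a global trivialisation of $f^*\CG$ on the filling $N$ and then invoking Stokes' theorem, and to derive both directions of (2) from (1). For (1), the key observation is that a compact oriented $3$-manifold $N$ with non-empty boundary has the homotopy type of a $2$-complex, so $\rmH^3(N;\ZN)=0$. By Proposition~\ref{st:existence of trivs}(1), the gerbe $f^*\CG\in\Grb^\nabla(N)$ therefore admits a trivialisation $\widetilde\CT\colon\CI_{\tilde\rho}\to f^*\CG$, whose curving necessarily satisfies $\dd\tilde\rho=f^*\curv(\CG)$ on $N$ by Definition~\ref{def:BGrb w conn}. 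Restriction along $\partial N=\Sigma$ yields a trivialisation $\widetilde\CT|_\Sigma\colon\CI_{\tilde\rho|_\Sigma}\to\sigma^*\CG$, since $f|_{\partial N}=\sigma$ and pullback commutes with restriction to the boundary. As Definition~\ref{def:surface hol} is independent of the chosen trivialisation, I may then compute $\hol(\CG;\sigma)=\exp(-\int_\Sigma\tilde\rho|_\Sigma)$, and Stokes' theorem gives $\int_\Sigma\tilde\rho|_\Sigma=\int_N\dd\tilde\rho=\int_N f^*\curv(\CG)$, which is the integral in the statement.

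For (2), the implication ``$\curv(\CG)=0\Rightarrow$ trivial holonomy'' is immediate from (1): whenever $\sigma\colon\Sigma\to M$ admits an extension $f\colon N\to M$ with $\partial N=\Sigma$, part (1) gives $\hol(\CG;\sigma)=\exp(0)=1$. For the converse, assume $\hol(\CG;\sigma)=1$ for every closed oriented $(\Sigma,\sigma)$ and fix $p\in M$ together with a chart diffeomorphic to $\RN^3$ around $p$. For every sufficiently small embedded closed $3$-ball $B$ inside this chart, the inclusion $\iota\colon\partial B\hookrightarrow M$ admits the filling $\iota|_B\colon B\to M$, so by (1) we obtain $\exp(-\int_B\curv(\CG))=1$, hence $\int_B\curv(\CG)\in 2\pi\iu\,\ZN$. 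As $B$ shrinks to $p$ the integral varies continuously and tends to $0$, which forces it to vanish identically for small $B$; differentiating at $p$ gives $\curv(\CG)|_p=0$, and since $p$ was arbitrary, $\curv(\CG)=0$.

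The main obstacle is the filling step in the direction ``flat $\Rightarrow$ trivial holonomy'' of (2): a closed oriented surface $(\Sigma,\sigma)$ need not be null-bordant in $M$, so part (1) does not apply directly. I would resolve this either by reading ``every closed surface'' as ``every null-bordant $(\Sigma,\sigma)$''---which is the content genuinely used in applications and suffices to characterise $\curv(\CG)$ pointwise via the small-ball argument above---or by a short supplementary computation using Proposition~\ref{st:diffcoho hexagon} to trace trivial surface holonomy back to the vanishing of the curvature component of $\bbD(\CG)$. The remaining verifications, notably the independence of Definition~\ref{def:surface hol} of the trivialisation and the compatibility of $\widetilde\CT|_\Sigma$ with pullback, are routine given the $2$-categorical framework of Section~\ref{sec:Gerbes}.
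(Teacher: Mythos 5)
Your part~(1) is correct and is the standard argument the author presumably has in mind (the paper itself only says ``one now readily proves'' and supplies no proof): since every component of $N$ meets $\partial N$ we have $\rmH^3(N;\ZN)=0$, so Proposition~\ref{st:existence of trivs}(1) yields a trivialisation $\widetilde\CT\colon\CI_{\tilde\rho}\to f^*\CG$; parallelness forces $\dd\tilde\rho=f^*\curv(\CG)$, restriction to $\partial N$ gives a trivialisation of $\sigma^*\CG$ with curving $\tilde\rho|_\Sigma$, and Stokes' theorem together with the well-definedness of Definition~\ref{def:surface hol} finishes the computation. Your small-ball argument for the implication ``trivial holonomy around every closed surface $\Rightarrow\curv(\CG)=0$'' is likewise sound.

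The genuine gap is the converse implication in~(2), and your suspicion about it is fully justified: as literally printed, ``$\curv(\CG)=0\Rightarrow$ trivial holonomy around \emph{every} closed surface'' is false, so no supplementary argument can close the gap without amending the statement. Concretely, take $M=\Sigma$ a closed oriented surface and $\CG=\CI_\rho$ with $\rho$ closed but $\int_\Sigma\rho\notin 2\pi\iu\,\ZN$; then $\curv(\CI_\rho)=\dd\rho=0$, yet $\hol(\CI_\rho;1_\Sigma)=\exp(-\int_\Sigma\rho)\neq 1$ by Definition~\ref{def:surface hol}. This is consistent with Proposition~\ref{st:diffcoho hexagon}: flat gerbes are classified by $\rmH^2(M;\sfU(1)_\delta)$, and the holonomy of a flat gerbe around $(\Sigma,\sigma)$ is the pairing of that class with $\sigma_*[\Sigma]\in\rmH_2(M;\ZN)$, which need not vanish. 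The true dichotomy is that trivial holonomy around \emph{every} closed surface is equivalent to $\bbD(\CG)=0$ (since every integral $2$-homology class is represented by a closed oriented surface), whereas flatness is equivalent to trivial holonomy around every \emph{null-bordant} $(\Sigma,\sigma)$ --- which is exactly the statement your argument does prove, via part~(1) for one direction and small balls for the other. Of your two proposed repairs, only the first (restricting to null-bordant surfaces) produces a correct theorem; the hexagon computation you sketch bears on the direction you have already established, not on the failing one. You should therefore record part~(2) in the corrected form and note the discrepancy with the proposition as printed, rather than hope a further lemma will rescue the literal statement.
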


\begin{remark}
Finding a geometric origin for the Wess-Zumino-Witten action in conformal field theory was one of the driving forces behind the development of gerbes with connection, going back to~\cite{Witten:Current_algebra, Gawedzki:Topological_Actions}.
For further treatments of gerbes and D-branes from the perspective of conformal field theory, see, for instance,~\cite{Gawedzki-Reis:WZW-branes_and_gerbes, Waldorf--Thesis, FNSW:BGrbs_and_SurfHol} and references therein.
Finally, defects in conformal field theories and gauging of sigma-models are also described by gerbes~\cite{FSW:Bi-branes, GSW--Gauge_anomalies_in_2D-sigma-models, GSW--Gauging_sigma-models_with_defects}.
\qen
\end{remark}

\begin{remark}
There exists an extension of surface holonomy of gerbes to unoriented surfaces.
This requires a certain type of compatibility of the gerbe with the canonical involution on the orientation double cover of a surface; this compatibility is additional data on the gerbe called a \emph{Jandl structure}.
For references on Jandl gerbes, unoriented surface holonomy and applications to Wess-Zumino-Witten theory, we refer the reader to~\cite{SSW--Unoriented_WZW}.
\qen
\end{remark}

\subsection{Transgression of gerbes and holonomy on surfaces with boundary}
\label{sec:Transgression of Gerbes}

If $\Sigma$ is an oriented surface with $\partial \Sigma \neq \emptyset$, the surface holonomy of  $\CG \in \Grb^\nabla(M)$ around a smooth map $\sigma \colon \Sigma \to M$ is no longer well-defined as an element in $\sfU(1)$.
Instead, if $\CT \colon \CI_\rho \to \sigma^*\CG$ and $\CT' \colon \CI_{\rho'} \to \sigma^*\CG$ are two trivialisations, we have
\begin{equation}
	\hol(\CG;\sigma,\CT') = \hol(\CG;\sigma, \CT)\, \exp \bigg( - \int_\Sigma \rho' - \rho \bigg)\,.
\end{equation}
Here, $\hol(\CG;\sigma,\CT)$ is the surface holonomy of $\CG$ around $\sigma$ as in~\eqref{eq:surface hol}, computed with respect to $\CT$.
Observe that the error term on the right-hand side still vanishes if there is an isomorphism $\CT \cong \CT'$ of trivialisations (by Proposition~\ref{st:2isos as sections}).

The connected components of $\partial \Sigma$ are circles (up to orientation-preserving diffeomorphism).
To better understand what happens at the boundary, we thus consider smooth maps $\gamma \colon \bbS^1 \to M$.
By Proposition~\ref{st:existence of trivs} there exist trivialisations $\CT \colon \CI_0 \to \gamma^*\CG$.
(Here, we must have $\rho = 0$, since $\rho \in \Omega^2(\bbS^1; \iu\RN) = 0$.
It follows from Construction~\ref{cons:tensoring mps by VBuns}, Proposition~\ref{st:diffcoho hexagon}, and Proposition~\ref{st:existence of trivs} that any choice of trivialisation $\CT \colon \CI_0 \to \gamma^*\CG$ induces a canonical bijection
\begin{align}
	\pi_0 \big( \Triv(\gamma^*\CG) \big) \coloneqq \{\text{trivialisations of $\gamma^*\CG$}\} / \text{iso.}
	&\arisom \HLBdl^\nabla(\bbS^1) / \text{iso.} \eqqcolon \pi_0 \big( \HLBdl^\nabla(\bbS^1) \big)
	\cong \sfU(1)\,,
	\\*
\label{eq:ulHom in pi_0(Triv(G))}
	(\CT' \colon \CI_0 \to \gamma^*\CG) &\longmapsto \hol \big( \ul{\Hom}(\CT, \CT') \big)\,,
\end{align}
where the holonomy on the right-hand side is that of a line bundle with connection on $\bbS^1$.
At the same time, the tensor product from Construction~\ref{cons:tensoring mps by VBuns} induces an action of $\sfU(1) \cong \pi_0(\HLBdl^\nabla(\bbS^1))$ on $\pi_0(\Triv(\gamma^*\CG))$.
In fact, using Proposition~\ref{st:existence of trivs}, we derive

\begin{proposition}
\emph{\cite{Waldorf--Trangression_II}}
The set $\pi_0(\Triv(\gamma^*\CG))$ is a torsor over $\sfU(1) \cong \pi_0(\HLBdl^\nabla(\bbS^1))$.
\end{proposition}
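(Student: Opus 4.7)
The plan is to verify the three ingredients of a torsor structure: non-emptiness of $\pi_0(\Triv(\gamma^*\CG))$, transitivity of the $\sfU(1)$-action, and freeness. The action itself is the one induced on $\pi_0$ by the tensoring operation of Construction~\ref{cons:tensoring mps by VBuns}, restricted to hermitean line bundles with flat connection on $\bbS^1$ (since $\pi_0(\HVBdl^\nabla(\bbS^1)) \to \pi_0(\HLBdl^\nabla(\bbS^1)) \cong \sfU(1)$ is identification by holonomy of line bundles, and the rank of the underlying bundle must be one for the result to remain an isomorphism). So the first thing to check is that this action is well-defined on isomorphism classes, which is immediate from functoriality of the tensor product and the fact that parallel isomorphisms of line bundles give rise to 2-isomorphisms of the induced trivialisations.

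For non-emptiness, observe that $\rmH^3(\bbS^1;\ZN) = 0$ and apply Proposition~\ref{st:existence of trivs}(1) to $\gamma^*\CG \in \Grb^\nabla(\bbS^1)$; note that the constraint $\rho \in \Omega^2(\bbS^1; \iu\RN) = 0$ forces all trivialisations to land in $\CI_0$. For transitivity, let $\CT, \CT' \colon \CI_0 \to \gamma^*\CG$ be two trivialisations. By Proposition~\ref{st:existence of trivs}(2), there is a canonical isomorphism of trivialisations
\begin{equation}
	\ul{\Hom}(\CT,\CT') \otimes \CT \arisom \CT'\,,
\end{equation}
which exhibits $[\CT']$ as the image of $[\CT]$ under the class of $\ul{\Hom}(\CT,\CT')$ in $\pi_0(\HLBdl^\nabla(\bbS^1))$. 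Under the bijection~\eqref{eq:ulHom in pi_0(Triv(G))} this class corresponds exactly to $\hol(\ul{\Hom}(\CT,\CT')) \in \sfU(1)$, so the action is transitive.

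For freeness, suppose $L \in \HLBdl^\nabla(\bbS^1)$ is such that $L \otimes \CT \cong \CT$ via a unitary parallel 2-isomorphism, for some trivialisation $\CT \in \Triv(\gamma^*\CG)$. Using that $\ul{\Hom}$ is compatible with tensor products and satisfies $\ul{\End}(\CT) \cong I_0$ (as $\CT$ is a parallel isomorphism between trivial gerbes with zero curving), we obtain a canonical identification $\ul{\Hom}(\CT, L \otimes \CT) \cong L$. By Proposition~\ref{st:2isos as sections}, a unitary parallel 2-isomorphism $\CT \to L \otimes \CT$ corresponds to a parallel unit-length section of $L$, i.e., to a trivialisation of $L$ as a hermitean line bundle with connection. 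Hence $[L] = [I_0]$ in $\pi_0(\HLBdl^\nabla(\bbS^1))$, proving freeness. The main subtlety is the last step: one has to be careful that a 2-isomorphism $L \otimes \CT \cong \CT$ in the category $\Triv(\gamma^*\CG)$ (which is defined via parallel 2-isomorphisms in $\Grb^\nabla(\bbS^1)$) really does encode a parallel unit-length section of $L$, and this is exactly what the unitary/parallel refinement of Proposition~\ref{st:2isos as sections} guarantees.
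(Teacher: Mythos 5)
Your proposal is correct and follows essentially the same route as the paper, which derives the torsor structure from the tensoring action of Construction~\ref{cons:tensoring mps by VBuns} together with Proposition~\ref{st:existence of trivs} and the bijection $\CT' \mapsto \hol(\ul{\Hom}(\CT,\CT'))$; your explicit verification of non-emptiness, transitivity and freeness just unpacks that bijection via Proposition~\ref{st:2isos as sections}. One cosmetic remark: the justification ``as $\CT$ is a parallel isomorphism between trivial gerbes with zero curving'' for $\ul{\End}(\CT) \cong I_0$ is slightly off (the target $\gamma^*\CG$ need not be a trivial gerbe); the correct reason, already noted in the paper, is that $\ul{\End}(\CE)$ for a rank-one $\CE$ carries the canonical unit-length section $1_\CE$, which is parallel.
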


To any $\sfU(1)$-torsor $P$, we can associate a complex line by viewing $P$ as a principal $\sfU(1)$-bundle over the point and performing the associated bundle construction.
In our situation, this yields the complex line
\begin{equation}
\label{eq:fib of CTCG at gamma}
	\CT\CG_{|\gamma} \coloneqq \pi_0 \big( \Triv(\gamma^*\CG) \big) \times_{\sfU(1)} \CN\,.
\end{equation}
An element in $\CT\CG_{|\gamma}$ is thus an equivalence class $[[\CT], z]$ of an isomorphism class $[\CT] \in \pi_0(\Triv(\gamma^*\CG))$ and a number $z \in \CN$.
Under the isomorphism $\pi_0(\HLBdl^\nabla(\bbS^1)) \cong \sfU(1)$, the equivalence relation reads as (compare Section~\ref{sec:spl view on LBuns})
\begin{equation}
	\big[ [\CT], z \big] = \big[ [\CT \otimes J],\, \hol(J)^{-1} z \big]\,,
\end{equation}
for $[J] \in \pi_0(\HLBdl^\nabla(\bbS^1))$.

Let $\Sigma$ be a compact oriented surface with a smooth map $\sigma \colon \Sigma \to M$.
Suppose the boundary of $\Sigma$ is partitioned into \emph{incoming} circles $\bbS^1_{in,a} \subset \partial\Sigma$, for $a = 1, \ldots, N_{in}$, and \emph{outgoing} circles $\bbS^1_{out,b} \subset \partial\Sigma$, for $b = 1, \ldots, N_{out}$.
We set
\begin{equation}
	\gamma_{in, a} \coloneqq \sigma_{|\bbS^1_{in,a}}
	\qandq
	\gamma_{out, b} \coloneqq \sigma_{|\bbS^1_{out, b}}\,.
\end{equation}
Suppose further that the incoming circles are endowed with the opposite orientation of that induced from $\Sigma$, and that the outgoing circles carry the orientation induced from $\Sigma$.
Consider a vector
\begin{equation}
	\xi = \bigotimes_{a = 1}^{N_{in}} \big[ [\CT_a],\, z_a \big]
	= z \cdot \bigotimes_{a = 1}^{N_{in}} \big[ [\CT_a],\, 1 \big]
	\ \in \bigotimes_{a = 1}^{N_{in}} \CT\CG_{|\gamma_{in, a}}\,,
\end{equation}
where $\CT_a \colon \CI_0 \to \gamma_{in, a}^*\CG$ is a trivialisation%
\footnote{Technically, in order to match the definition of $\CT\CG_{|\gamma}$, we have to choose diffeomorphisms $\bbS^1_{in,a} \cong \bbS^1$ and then check that the constructions are independent of that choice.
We will not go into these details here, but instead refer the reader to~\cite{BW:OCTFTs_and_Gerbes}.}.
Since $\sigma^*\CG$ is trivialisable over $\Sigma$, we may even choose a trivialisation $\CT \colon \CI_\rho \to \sigma^*\CG$ and assume that $\CT_a = \CT_{|\bbS^1_{in,a}}$, for $a = 1, \ldots, N_{in}$, where $\bbS^1_{in,a} \subset \Sigma$ is the $a$-th incoming boundary circle.
Define a new vector
\begin{equation}
\label{eq:Z[Sigma,sigma]}
	\scZ_\CG[\Sigma,\sigma;\CT](\xi)
	\coloneqq z \cdot \exp \bigg( - \int_\Sigma \rho \bigg) \cdot \bigotimes_{b = 1}^{N_{out}} \big[ [\CT_b],\, 1 \big]
	\ \in \bigotimes_{b = 1}^{N_{out}} \CT\CG_{|\gamma_{out, b}}\,,
\end{equation}
where we have set $\CT_b = \CT_{|\bbS^1_{out,b}}$.

\begin{proposition}
The vector $\scZ_\CG[\Sigma,\sigma;\CT](\xi)$ in~\eqref{eq:Z[Sigma,sigma]} is independent of the choice of trivialisation $\CT \colon \CI_\rho \to \sigma^*\CG$ over $\Sigma$.
\end{proposition}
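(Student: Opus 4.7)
The plan is to compare the vectors produced by two arbitrary trivialisations $\CT \colon \CI_\rho \to \sigma^*\CG$ and $\CT' \colon \CI_{\rho'} \to \sigma^*\CG$ over $\Sigma$. By Proposition~\ref{st:existence of trivs}, the pair produces a hermitean line bundle with connection $J \coloneqq \ul{\Hom}(\CT, \CT')$ on $\Sigma$ satisfying $\curv(J) = \rho' - \rho$, together with a canonical isomorphism $J \otimes \CT \arisom \CT'$. Restricting $J$ to each boundary circle yields line bundles $J_a \coloneqq J|_{\bbS^1_{in,a}}$ and $J_b \coloneqq J|_{\bbS^1_{out,b}}$ whose holonomies govern the change of basis in the fibres $\CT\CG|_\gamma$ via~\eqref{eq:ulHom in pi_0(Triv(G))}.

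The first step is to rewrite $\xi$ in the $\CT'$-basis on the incoming side. Using the equivalence relation from~\eqref{eq:fib of CTCG at gamma} and the restriction of $J \otimes \CT \cong \CT'$ to $\bbS^1_{in,a}$, each generator $[[\CT_a], 1]$ becomes $\hol(J_a)^{-1}\, [[\CT'_a], 1]$, so that $\xi = z \prod_a \hol(J_a)^{-1} \bigotimes_a [[\CT'_a], 1]$. Applying the definition~\eqref{eq:Z[Sigma,sigma]} for $\CT'$ produces a vector in the $\CT'$-basis on the outgoing side, and re-expressing each $[[\CT'_b], 1]$ as $\hol(J_b)\, [[\CT_b], 1]$ yields
\begin{equation}
	\scZ_\CG[\Sigma,\sigma;\CT'](\xi)
	= z \cdot \exp\bigg( - \int_\Sigma \rho' \bigg) \cdot \prod_{b=1}^{N_{out}} \hol(J_b) \cdot \prod_{a=1}^{N_{in}} \hol(J_a)^{-1} \cdot \bigotimes_{b=1}^{N_{out}} \big[ [\CT_b], 1 \big].
\end{equation}
Comparing with the value $\scZ_\CG[\Sigma,\sigma;\CT](\xi) = z \exp(-\int_\Sigma \rho) \bigotimes_b [[\CT_b], 1]$ reduces the claim to the identity
\begin{equation}
	\prod_{b=1}^{N_{out}} \hol(J_b) \cdot \prod_{a=1}^{N_{in}} \hol(J_a)^{-1}
	= \exp\bigg( \int_\Sigma (\rho' - \rho) \bigg).
\end{equation}

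This is the holonomy-as-boundary-integral formula applied to the line bundle $J \to \Sigma$. Because the outgoing circles carry the induced orientation from $\Sigma$ and the incoming circles the opposite one, the left-hand side is precisely the product of holonomies of $J$ around the components of $\partial \Sigma$ taken with their $\Sigma$-induced orientations; by Stokes' theorem this equals $\exp(\int_\Sigma \curv(J))$, and $\curv(J) = \rho' - \rho$ closes the argument. The only real obstacle is bookkeeping: one must align the orientation convention for the boundary decomposition used in~\eqref{eq:Z[Sigma,sigma]} with those underlying Stokes' theorem and the sign in the surface holonomy formula~\eqref{eq:surface hol}. Once these conventions are fixed, the computation above is essentially the entire proof.
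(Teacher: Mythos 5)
Your proof is correct and follows essentially the same route as the paper: both arguments change basis on the incoming and outgoing fibres using the equivalence relation in $\CT\CG_{|\gamma}$ and the line bundle $\ul{\Hom}(\CT,\CT')$, and both reduce the claim to the identity $\prod_b \hol(J_b)\prod_a \hol(J_a)^{-1} = \exp\big(\int_\Sigma(\rho'-\rho)\big)$, i.e.\ the holonomy of $\ul{\Hom}(\CT,\CT')$ around $\partial\Sigma$ computed via Stokes' theorem from $\curv(\ul{\Hom}(\CT,\CT')) = \rho'-\rho$ (Proposition~\ref{st:existence of trivs}(2)). The only cosmetic difference is that the paper keeps the incoming and outgoing holonomy factors written as holonomies of $\ul{\Hom}(\CT_a,\CT'_a)$ and $\ul{\Hom}(\CT'_b,\CT_b)$ and invokes duality to unify them, whereas you package everything into the single bundle $J$ from the start.
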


\begin{proof}
Let $\CT' \colon \CI_{\rho'} \to \sigma^*\CG$ be another trivialisation.
By~\eqref{eq:ulHom in pi_0(Triv(G))} and~\eqref{eq:fib of CTCG at gamma}, we have
\begin{equation}
	\xi	= z \cdot \bigotimes_{a = 1}^{N_{in}} \big[ [\CT_a],\, 1 \big]
	= z \cdot \prod_{a=1}^{N_{in}} \hol \big( \ul{\Hom}(\CT_a,\CT'_a); \bbS^1_{in,a} \big)^{-1} \cdot \bigotimes_{a = 1}^{N_{in}} \big[ [\CT'_a],\, 1 \big]
	= z' \cdot \bigotimes_{a = 1}^{N_{in}} \big[ [\CT'_a],\, 1 \big]\,.
\end{equation}
Applying the construction~\eqref{eq:Z[Sigma,sigma]} to this, using the trivialisation $\CT'$ in place of $\CT$, we obtain
\begin{align}
	&\scZ[\Sigma,\sigma; \CT'](\xi)
	= z' \cdot \exp \bigg( - \int_\Sigma \rho' \bigg) \cdot \bigotimes_{b = 1}^{N_{out}} \big[ [\CT'_b],\, 1 \big]
	\\
	&= z \cdot \exp \bigg( - \int_\Sigma \rho' \bigg) \cdot \prod_{a=1}^{N_{in}} \hol \big( \ul{\Hom}(\CT_a,\CT'_a); \bbS^1_{in,a} \big)^{-1} \cdot \prod_{b=1}^{N_{out}} \hol \big( \ul{\Hom}(\CT'_b,\CT_b); \bbS^1_{out,b} \big)^{-1} \cdot \bigotimes_{b = 1}^{N_{out}} \big[ [\CT_b],\, 1 \big]
	\\
	&= z \cdot \exp \bigg( - \int_\Sigma \rho' \bigg) \cdot \prod_{a=1}^{N_{in}} \hol \big( \ul{\Hom}(\CT_a,\CT'_a); \bbS^1_{in,a} \big)^{-1} \cdot \prod_{b=1}^{N_{out}} \hol \big( \ul{\Hom}(\CT_b,\CT'_b); \bbS^1_{out,b} \big) \cdot \bigotimes_{b = 1}^{N_{out}} \big[ [\CT_b],\, 1 \big]
	\\
	&= z \cdot \exp \bigg( - \int_\Sigma \rho' \bigg) \cdot \hol \big( \ul{\Hom}(\CT,\CT'); \partial \Sigma \big) \cdot \bigotimes_{b = 1}^{N_{out}} \big[ [\CT_b],\, 1 \big]
	\\*
	&= z \cdot \exp \bigg( - \int_\Sigma \rho \bigg) \cdot \bigotimes_{b = 1}^{N_{out}} \big[ [\CT_b],\, 1 \big]\,.
\end{align}
Here we have used that $\ul{\Hom}(\CT,\CT')$ is the dual line bundle to $\ul{\Hom}(\CT',\CT)$ (which one can see from its construction, for instance---see Section~\ref{sec:Grbs and twVBuns}), Proposition~\ref{st:existence of trivs}(2), and the particular choices of orientations on the incoming and outgoing boundary circles.
\end{proof}

\begin{corollary}
\label{st:Z_G as linear map}
Let $\CG \in \Grb^\nabla(M)$.
For any compact, oriented surface $\Sigma$ whose boundary is partitioned into incoming and outgoing boundary components as above, and which is endowed with smooth map $\sigma \colon \Sigma \to M$, we obtain a linear map
\begin{equation}
	\scZ_\CG[\Sigma,\sigma] \colon \bigotimes_{a = 1}^{N_{in}} \CT\CG_{\gamma_{in, a}}
	\longrightarrow \bigotimes_{b = 1}^{N_{out}} \CT\CG_{|\gamma_{out, b}}\,.
\end{equation}
If $\partial \Sigma = \emptyset$, this reproduces the surface holonomy of $\CG$ from Section~\ref{sec:Surface Holonomy} as a linear map $\CN \to \CN$.
\end{corollary}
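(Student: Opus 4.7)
The corollary is mostly a repackaging of the preceding proposition, so my plan is to promote the pointwise assignment in~\eqref{eq:Z[Sigma,sigma]} to a linear map and then specialise to the case $\partial\Sigma = \emptyset$.

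First I would fix any trivialisation $\CT \colon \CI_\rho \to \sigma^*\CG$ on $\Sigma$, which exists by Proposition~\ref{st:existence of trivs} since $\rmH^3(\Sigma;\ZN) = 0$. Its boundary restrictions $\CT|_{\bbS^1_{in,a}}$ and $\CT|_{\bbS^1_{out,b}}$ give preferred representatives of isomorphism classes of trivialisations on each boundary circle, and hence preferred basis vectors of the one-dimensional fibres~\eqref{eq:fib of CTCG at gamma}. Any element of $\bigotimes_{a=1}^{N_{in}} \CT\CG_{|\gamma_{in,a}}$ is then uniquely a scalar multiple $z \cdot \bigotimes_a [[\CT|_{\bbS^1_{in,a}}], 1]$ for some $z \in \CN$, and~\eqref{eq:Z[Sigma,sigma]} is manifestly $\CN$-linear in $z$. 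This defines a linear map $\scZ_\CG[\Sigma,\sigma;\CT]$ between the two one-dimensional tensor products.

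Next I would invoke the preceding proposition to conclude that this linear map is independent of $\CT$: for any other trivialisation $\CT' \colon \CI_{\rho'} \to \sigma^*\CG$, the preceding proposition yields equality of the two values on the basis vector $\bigotimes_a [[\CT|_{\bbS^1_{in,a}}], 1]$, and hence, by linearity, on all of $\bigotimes_a \CT\CG_{|\gamma_{in,a}}$. This produces the claimed $\CT$-independent linear map $\scZ_\CG[\Sigma,\sigma]$. For the final assertion, when $\partial\Sigma = \emptyset$ both tensor products are the empty tensor product, canonically identified with $\CN$, the input reduces to a scalar $z$, and formula~\eqref{eq:Z[Sigma,sigma]} collapses to $\scZ_\CG[\Sigma,\sigma](z) = z \cdot \exp(-\int_\Sigma \rho) = z \cdot \hol(\CG;\sigma)$ by Definition~\ref{def:surface hol}. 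The only subtle point is bookkeeping: one must verify that the preferred representative construction is compatible simultaneously on all boundary components with the opposite-orientation convention on incoming circles, but this compatibility is exactly what the proof of the preceding proposition establishes, so I do not foresee any further obstacle.
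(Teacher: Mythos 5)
Your proposal is correct and matches the paper's (implicit) argument: the corollary is stated without a separate proof precisely because it follows immediately from the preceding proposition, linearity of~\eqref{eq:Z[Sigma,sigma]} in the scalar $z$, and the identification of the empty tensor product with $\CN$ so that the formula collapses to $z\cdot\exp(-\int_\Sigma\rho) = z\cdot\hol(\CG;\sigma)$ when $\partial\Sigma=\emptyset$. Your bookkeeping remark about orientations and the choice of boundary parametrisations is exactly the point the paper defers to its footnote and to~\cite{BW:OCTFTs_and_Gerbes}.
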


One can show that the linear maps $\scZ_\CG[\Sigma,\sigma]$ have various useful properties:
for instance, they depend only on the \emph{thin} homotopy class%
\footnote{Two smooth maps of manifolds $f,g \colon N \to M$ are \emph{thinly} homotopic if there exists a homotopy $h \colon [0,1] \times N \to M$ between them whose differential $h_*$ has rank at most $\dim(N)$ everywhere on $[0,1] \times N$.}
of $\sigma$, but at the same time depend on $\sigma$ smoothly in a precise sense.
Most importantly, they are compatible with gluing of surfaces along boundary components, and thus assemble into what is called a \emph{smooth functorial field theory on $M$} (in the sense of~\cite{ST:SuSy_FTs_and_generalised_coho}).
We refer the reader to~\cite{BW:Transgression_of_D-branes,BW:OCTFTs_and_Gerbes} for the full constructions and details;
here, we shall restrict ourselves to stating the main result:

\begin{theorem}
\label{st:FFTs from Grbs}
\emph{\cite{BW:OCTFTs_and_Gerbes}}
Any bundle gerbe $\CG \in \Grb^\nabla(M)$ with connection on $M$ gives rise to a two-dimensional, oriented, smooth functorial field theory on $M$.
This field theory depends functorially on $\CG$, and it admits an extension to an open-closed field theory in the presence of D-branes (see Section~\ref{sec:Rmks on pt and D-branes}).
\end{theorem}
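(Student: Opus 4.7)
The plan is to assemble the data constructed above---the complex line $\CT\CG_{|\gamma}$ for each loop $\gamma$ and the linear map $\scZ_\CG[\Sigma,\sigma]$ for each bordism $(\Sigma,\sigma)$---into a symmetric monoidal functor out of a smooth bordism category of surfaces in $M$, and then verify functoriality in $\CG$ and the extension to D-branes.

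First, I would fix a convenient model for the source category: objects are closed oriented 1-manifolds $S$ equipped with a smooth map $\gamma \colon S \to M$ (together with collar data), and morphisms are oriented bordisms $\Sigma$ with a smooth extension $\sigma \colon \Sigma \to M$, up to thin homotopy rel.\ boundary. On objects, the functor sends $(S,\gamma)$ to the tensor product of the complex lines $\CT\CG_{|\gamma_i}$ over the connected components of $S$; on morphisms, it sends $(\Sigma,\sigma)$ to $\scZ_\CG[\Sigma,\sigma]$ from Corollary~\ref{st:Z_G as linear map}. Disjoint union of bordisms corresponds to the tensor product of linear maps, furnishing the symmetric monoidal structure.

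The main content is gluing. Given $\Sigma = \Sigma_1 \cup_{(S,\gamma)} \Sigma_2$ with maps $\sigma_i \colon \Sigma_i \to M$ agreeing on $S$, one needs
\begin{equation}
	\scZ_\CG[\Sigma, \sigma] = \scZ_\CG[\Sigma_2, \sigma_2] \circ \scZ_\CG[\Sigma_1, \sigma_1]\,.
\end{equation}
The strategy is to choose trivialisations $\CT_i \colon \CI_{\rho_i} \to \sigma_i^*\CG$ over each piece; their restrictions to the gluing circle $S$ differ by a line bundle on $S$ via~\eqref{eq:ulHom in pi_0(Triv(G))}. One then either modifies one trivialisation so that $\CT_1$ and $\CT_2$ glue to a trivialisation $\CT$ of $\sigma^*\CG$ on all of $\Sigma$, in which case $\int_\Sigma \rho$ splits as $\int_{\Sigma_1}\rho_1 + \int_{\Sigma_2}\rho_2$ and the identity above follows directly from~\eqref{eq:Z[Sigma,sigma]}, or one tracks the discrepancy as a holonomy around $S$ and absorbs it into the equivalence relation defining $\CT\CG_{|\gamma}$. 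Independence of $\scZ_\CG[\Sigma_i,\sigma_i]$ from the choice of $\CT_i$ (already established) guarantees both routes give the same answer.

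Smoothness is enforced by upgrading the whole construction to families parametrised by a smooth manifold $X$, replacing $\sigma \colon \Sigma \to M$ by $X \times \Sigma \to M$ and requiring the assignments to produce smooth line bundles and bundle maps over $X$. The key inputs are that $\pi_0(\Triv(\gamma^*\CG))$ varies smoothly as a $\sfU(1)$-torsor in $\gamma$ and that $\exp(-\int_\Sigma \rho)$ depends smoothly on $\sigma$; both follow from Theorem~\ref{st:Grb(M) is 2Cat}(4) and standard stability of fibre integration. Functoriality in $\CG$ is then essentially automatic: a 1-morphism $\CE \colon \CG_0 \to \CG_1$ pulls back to induce equivalences $\Triv(\gamma^*\CG_0) \simeq \Triv(\gamma^*\CG_1)$ of groupoids and hence isomorphisms of the corresponding transgression lines, and these intertwine the linear maps $\scZ_{\CG_i}$ by a direct calculation using the defining formula~\eqref{eq:Z[Sigma,sigma]}. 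Finally, the D-brane extension is obtained by allowing boundary arcs mapped into a submanifold $Q \subset M$ equipped with a trivialisation $\CT_Q \colon \CI_\omega \to \CG|_Q$; on such arcs, $\CT_Q$ provides a canonical choice of trivialisation that cancels the obstruction to closed-surface holonomy, so the recipe~\eqref{eq:Z[Sigma,sigma]} extends to bordisms with free boundary. The hard part is executing the gluing in the smooth-family setting: coherently choosing trivialisations along a cutting circle while varying $(\Sigma,\sigma)$ smoothly forces one either into a careful local-to-global argument on $\Triv(\sigma^*\CG)$ or into a sheaf-theoretic reformulation of the bordism category as in~\cite{ST:SuSy_FTs_and_generalised_coho, BW:OCTFTs_and_Gerbes}.
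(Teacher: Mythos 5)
Your proposal follows essentially the same route as the paper, which itself does not prove this theorem but only assembles the ingredients (the transgression lines $\CT\CG_{|\gamma}$, the maps $\scZ_\CG[\Sigma,\sigma]$ of Corollary~\ref{st:Z_G as linear map}, and their independence of the chosen trivialisation) and defers the verification of gluing, smoothness, and the D-brane extension to~\cite{BW:OCTFTs_and_Gerbes}. Your gluing argument via tracking the holonomy of $\ul{\Hom}(\CT_1|_S,\CT_2|_S)$ and absorbing it into the equivalence relation defining $\CT\CG_{|\gamma}$ is exactly parallel to the paper's well-definedness proof for $\scZ_\CG[\Sigma,\sigma]$, and you correctly identify the smooth-family formulation of the bordism category as the remaining technical work.
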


This extends earlier results in this direction in~\cite{Gawedzki:Topological_Actions, BTW--Gerbes_and_HQFTs, Picken:TQFTs_and_Grbs}.
Let us illustrate some aspects of the results in Theorem~\ref{st:FFTs from Grbs}.
First, consider the complex line $\CT\CG_{|\gamma}$ from~\eqref{eq:fib of CTCG at gamma} we assigned to any smooth loop $\gamma$ in $M$.
Varying the curve $\gamma$, we have the following statement:
let $LM$ be the space of smooth maps $\gamma \colon \bbS^1 \to M$.
This is no longer a manifold, but one can describe it very conveniently as a \emph{diffeological spaces}~\cite{Iglesias-Zemmour:Diffeology,Baez-Hoffnung:Convenient_Spaces} (see also~\cite{Waldorf:Transgression_I, Bunk:Coh_Desc} for background on diffeological vector bundles).
The main idea behind diffeological spaces is to study spaces $N$ not by locally defined diffeomorphisms to euclidean space, but instead to use all maps from euclidean spaces to $N$ which satisfy a certain smoothness condition.
Many infinite-dimensional spaces which appear in geometry, such as mapping spaces and diffeomorphism groups of manifolds, can be naturally described as diffeological spaces.

\begin{theorem}
\emph{\cite{Waldorf--Trangression_II}}
The complex lines $\{\CT\CG_{|\gamma}\}_{\gamma \in LM}$ assemble into a diffeological hermitean line bundle $\CT\CG \to LM$ over the free loop space $LM$ of $M$.
\end{theorem}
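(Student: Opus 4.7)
The plan is to define the diffeology on the set-theoretic union $\CT\CG \coloneqq \coprod_{\gamma \in LM} \CT\CG_{|\gamma}$, equipped with the obvious projection to $LM$, by declaring when a map $\phi \colon U \to \CT\CG$ from a euclidean open set $U$ is a plot. First, every candidate plot $\phi$ must cover a plot $\hat\gamma \colon U \times \bbS^1 \to M$ of $LM$ via the projection. For each $u_0 \in U$, I would choose an open contractible neighbourhood $V \ni u_0$ in $U$. Since $\rmH^3(V \times \bbS^1; \ZN) \cong \rmH^3(\bbS^1; \ZN) = 0$, Proposition~\ref{st:existence of trivs}(1) produces a trivialisation $\CT \colon \CI_\rho \to \hat\gamma^*_{|V \times \bbS^1} \CG$ over $V \times \bbS^1$. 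I declare $\phi$ to be a plot if, for every $u_0 \in U$, there exist such $V$ and $\CT$ together with an ordinary smooth function $z \colon V \to \CN$ such that $\phi(u) = \big[[\CT_{|\{u\} \times \bbS^1}], z(u)\big]$ in the notation of~\eqref{eq:fib of CTCG at gamma}.

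Next I would verify that this condition is independent of the chosen local trivialisation, which is what makes the diffeology well-defined. By Proposition~\ref{st:existence of trivs}(2), any two trivialisations $\CT, \CT' \colon \CI_\bullet \to \hat\gamma^*_{|V \times \bbS^1} \CG$ are related by the hermitian line bundle with connection $\ul{\Hom}(\CT, \CT')$ on $V \times \bbS^1$, and by~\eqref{eq:ulHom in pi_0(Triv(G))} the corresponding representatives satisfy $z'(u) = \hol \big( \ul{\Hom}(\CT, \CT')_{|\{u\} \times \bbS^1} \big)^{-1} z(u)$. The classical smooth dependence of line-bundle holonomy on smooth families of loops makes $u \mapsto \hol(\ul{\Hom}(\CT, \CT')_{|\{u\} \times \bbS^1})$ a smooth $\sfU(1)$-valued function on $V$, so smoothness of $z$ is independent of the trivialisation chosen. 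The diffeology axioms (constant maps are plots, smooth reparametrisations of plots are plots, and the condition is local on the source) are then immediate from the corresponding properties for ordinary smooth functions.

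Finally, I would establish local triviality and the hermitian structure. On a contractible plot $V \to LM$, a globally chosen trivialisation $\CT$ induces the map $V \times \CN \to \CT\CG_{|V}$, $(u,z) \mapsto [[\CT_{|\{u\} \times \bbS^1}], z]$, which is a bijection by construction of $\CT\CG_{|\gamma}$ and a diffeological isomorphism since plots on both sides correspond under the above recipe. Each fibre $\CT\CG_{|\gamma}$ carries the canonical hermitian form $\|[[\CT], z]\| \coloneqq |z|$ coming from its realisation as a complex line associated to the $\sfU(1)$-torsor $\pi_0(\Triv(\gamma^*\CG))$; this is independent of the representative because the transition factors $\hol(\ul{\Hom}(\CT,\CT')_{|\{u\} \times \bbS^1}) \in \sfU(1)$ preserve absolute value, and it is smooth because it pulls back to $|z|$ under the local trivialisations. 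The main technical obstacle, and the only genuine analytic input, is the smoothness statement for holonomy: for a smooth family $\{J_u\}_{u \in V}$ of hermitian line bundles with connection on $\bbS^1$ arising as the restrictions of a single such bundle $J$ on $V \times \bbS^1$, one must show $u \mapsto \hol(J_u)$ is smooth. Once this is in hand, all remaining content of the theorem reduces to bookkeeping.
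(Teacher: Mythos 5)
The paper does not actually prove this statement---it is quoted from~\cite{Waldorf--Trangression_II}---so your argument can only be measured against the standard construction, which it matches: you declare a map $\phi \colon U \to \CT\CG$ covering a plot $\hat\gamma$ of $LM$ to be a plot when it is locally represented by a trivialisation of $\hat\gamma^*\CG$ over $V \times \bbS^1$ together with a smooth coefficient $z \colon V \to \CN$, and you reduce well-definedness to the smooth dependence of $u \mapsto \hol\big(\ul{\Hom}(\CT,\CT')_{|\{u\}\times\bbS^1}\big)$ on the parameter. This is correct and is essentially the route taken in the cited reference. Two routine points deserve an explicit sentence in a full write-up: first, that Construction~\ref{cons:HVBun enrichment} is compatible with restriction to the slices $\{u\}\times\bbS^1$, so that the transition factor relating your two local representatives really is the holonomy of the restricted bundle $\ul{\Hom}(\CT,\CT')$ as asserted via~\eqref{eq:ulHom in pi_0(Triv(G))}; and second, that the fibrewise addition and scalar multiplication are smooth for your diffeology, which follows immediately from the local trivialisations but is part of what ``hermitean line bundle'' asserts. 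The analytic input you isolate at the end is indeed classical and easy here: since $V$ is contractible, $\rmH^2(V\times\bbS^1;\ZN)=0$, so $J \cong I_A$ for some $A \in \Omega^1(V\times\bbS^1;\iu\RN)$, and $\hol(J_u) = \exp\big({-}\int_{\bbS^1} A_{|\{u\}\times\bbS^1}\big)$ is visibly smooth in $u$.
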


\begin{definition}
The line bundle $\CT\CG$ on $LM$ is called the \emph{transgression line bundle} of $\CG \in \Grb^\nabla(M)$.
\end{definition}

The transgression line bundle $\CT\CG \to LM$ comes with a natural parallel transport, which can be seen as induced from the above field-theory construction:
a smooth path $\Gamma \colon [0,1] \to LM$ is equivalent to a smooth map $\Gamma^\dashv \colon [0,1] \times \bbS^1 \to M$.
By Corollary~\ref{st:Z_G as linear map} it gives rise to an isomorphism
\begin{equation}
	\PT^\CG_\Gamma \coloneqq \scZ_\CG \big[ [0,1] \times \bbS^1, \Gamma^\dashv \big]
	\colon \CT\CG_{|\Gamma(0)} \longrightarrow \CT\CG_{|\Gamma(1)}\,.
\end{equation}
One can show that this map is compatible with concatenation of paths%
\footnote{Technically, one needs to demand sitting instants normal to the boundary, or talk about cutting paths instead of concatenating.
Details for the first approach can be found in~\cite{BW:Transgression_of_D-branes,Waldorf--Trangression_II}.};
that is, for concatenatable paths $\Gamma_0, \Gamma_1 \colon [0,1] \to LM$, we have
\begin{equation}
	\PT^\CG_{\Gamma_1} \circ \PT^\CG_{\Gamma_0} = \PT^\CG_{\Gamma_1 * \Gamma_0}\,.
\end{equation}
The maps $\PT^\CG$ endow the line bundle $\CT\CG$ with a connection~\cite{Waldorf--Trangression_II}.
The parallel transport $\PT^\CG$ has several additional properties; it is compatible with \emph{fusion} of loops, depends only on the thin homotopy class of $\Gamma$, and for any two \emph{thin} paths $\Gamma_0, \Gamma_1 \colon [0,1] \to LM$ which agree at $0$ and $1$, we have $\PT^\CG_{\Gamma_0} = \PT^\CG_{\Gamma_1}$~\cite{Waldorf--Trangression_II, BW:Transgression_of_D-branes}.
Here, a path $\Gamma \colon [0,1] \to LM$ is thin if $\Gamma^\dashv_*$ has rank at most one everywhere on $[0,1] \times \bbS^1$.
Let $\HLBdl^\nabla_{fus}(LM)$ denote the symmetric monoidal groupoid of hermitean line bundles with connection on $LM$ which have the above additional properties.

\begin{theorem}
\emph{\cite{Waldorf--Trangression_II}}
There is an equivalence of symmetric monoidal groupoids
\begin{equation}
	\rmh_0 \big( \Grb^\nabla_{par\, iso}(M), \otimes \big)
	\simeq
	\big( \HLBdl^\nabla_{fus}(LM), \otimes \big)\,,
\end{equation}
where on the left-hand side $\Grb^\nabla_{par\, iso}(M)$ is the 2-groupoid of gerbes with connection on $M$ and only their parallel isomorphisms and 2-isomorphisms.
The $\rmh_0$ denotes the identification of 2-isomorphic isomorphisms, making the left-hand side into a groupoid.
\end{theorem}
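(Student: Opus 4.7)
The strategy is to promote the fibrewise assignment $\gamma \mapsto \CT\CG_{|\gamma}$ to a symmetric monoidal functor $\CT$, descend it to $\rmh_0$, and establish that it is essentially surjective and fully faithful by constructing a genuine inverse (\emph{regression}) functor. The hard part will be the regression construction.

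First I would extend $\CT$ to a functor on the 2-groupoid $\Grb^\nabla_{par\,iso}(M)$. Given a parallel isomorphism $\CE \colon \CG_0 \to \CG_1$, post-composition with $\gamma^*\CE$ induces a $\sfU(1)$-equivariant bijection $\pi_0(\Triv(\gamma^*\CG_0)) \to \pi_0(\Triv(\gamma^*\CG_1))$, hence a linear isomorphism $\CT\CE_{|\gamma}$ on the associated complex lines~\eqref{eq:fib of CTCG at gamma}. Smoothness in the diffeological sense is obtained by pulling $\CE$ back along families of loops $U \times \bbS^1 \to M$ parameterised by a Cartesian space $U$, reducing to Proposition~\ref{st:2isos as sections}; compatibility with $\PT^\CG$, fusion, and thin homotopy then follows by unwinding definitions. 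Parallel 2-isomorphisms act trivially on $\pi_0(\Triv(\gamma^*\CG))$, so $\CT$ descends to $\rmh_0$. Monoidality is witnessed by the isomorphism $\CT\CG_0 \otimes \CT\CG_1 \to \CT(\CG_0 \otimes \CG_1)$ arising from the tensor product of trivialisations in Definition~\ref{def:tensor in Grb(M)}, which is $\sfU(1)$-bilinear on torsors and hence passes to the associated lines.

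Next, to prove essential surjectivity and fullness I would construct a regression functor $\CR \colon \HLBdl^\nabla_{fus}(LM) \to \rmh_0(\Grb^\nabla_{par\,iso}(M))$. Given a fusion line bundle $(L,\nabla)$ on $LM$, one builds gerbe data over the path space $PM$ with endpoint evaluation $(\ev_0,\ev_1)\colon PM \to M \times M$: fusion over triples of concatenatable paths supplies the bundle gerbe multiplication $\mu$ of Definition~\ref{def:BGrb CG}, thin invariance allows a line bundle on $PM \times_M PM$ to descend and produce the connection 1-form, and the thin structure yields the curving 2-form on $M$. The composition $\CT \circ \CR$ is identified with the identity by matching trivialisations of $\gamma^*\CR(L)$ with parallel sections of $\gamma^*L$, while $\CR \circ \CT \cong 1$ by a cocycle-level comparison using the classification of Theorem~\ref{st:classification of Grbs}. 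Faithfulness then follows from Proposition~\ref{st:2isos as sections}: parallel 2-isomorphisms $\CE \to \CE'$ correspond to parallel unit-length sections of the flat line bundle $\ul{\Hom}(\CE,\CE')$ on $M$, and these are detected by holonomy along loops---precisely the information recorded by the difference between $\CT\CE$ and $\CT\CE'$ as maps of fusion line bundles.

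The main obstacle is the regression functor $\CR$. Reconstructing finite-dimensional differential data on $M$ from a line bundle on the infinite-dimensional $LM$ demands a careful interplay of fusion (which supplies pair-of-pants gluing and hence the cocycle $\mu$), thin invariance (which collapses the tangential directions of $LM$ so that local forms on $M$ actually emerge), and compatibility with parallel transport. Verifying that $\CR$ produces a genuine smooth gerbe with connection---not merely a class in $\hat{\rmH}^3(M;\ZN)$---and that it is inverse to $\CT$ up to coherent natural isomorphism is the technical heart of the argument in~\cite{Waldorf--Trangression_II}, and is where I would expect to spend most of the work.
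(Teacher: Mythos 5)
The paper does not prove this theorem: it is quoted verbatim from the cited reference, so there is no in-paper argument to compare against. Your sketch follows the strategy of that reference -- transgression of objects and parallel isomorphisms, faithfulness via the holonomy of the flat bundle $\ul{\Hom}(\CE,\CE')$, and a regression functor built from fusion and thin-homotopy invariance -- and the ingredients you list are the right ones. One correction of detail: the regression $\CR$ is not built over the free path space with both endpoint evaluations $(\ev_0,\ev_1)\colon PM \to M \times M$, but over the \emph{based} path space $P_xM$ of paths starting at a chosen basepoint $x$ (one per connected component), with the single evaluation $\ev_1 \colon P_xM \to M$ as the surjective submersion; the line bundle of the regressed gerbe is the pullback of the fusion bundle along the map $P_xM \times_M P_xM \to LM$ that concatenates one path with the reversal of the other, and the fusion product supplies $\mu$. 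With that adjustment your outline is an accurate account of where the work lies, namely in showing that $\CR$ lands in genuine gerbes with connection and is quasi-inverse to $\CT$.
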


\begin{remark}
It is possible to obtain from the transgression of gerbes a formalism of dimensional reduction, which turns a gerbe with connection over an oriented circle bundle $P \to M$ into a principal $\sfU(1)$-bundle $P' \to M$~\cite{Bunk--Thesis, BS:Fluxes_Gerbes_2HSpaces} (see~\cite{BW:OCTFTs_and_Gerbes} for the equivariance of $\CT\CG$ with respect to diffeomorphisms of $\bbS^1$).
We expect that this is part of an explicit construction of topological T-duality in the presence of generic $H$-flux (see e.g.~\cite{BEM:T-duality, NW:HGeo_for_NonGeo_T-duals, BN:T-Duality, Alfonsi--Global_DFT_and_KK, Alfonsi--Extended-higher_Review}).
\qen
\end{remark}

\subsection{Remarks on parallel transport and D-branes}
\label{sec:Rmks on pt and D-branes}

In this section we briefly survey some further aspects of the surface holonomy and transgression constructions for gerbes with connection.
The phrase \emph{surface holonomy} of a gerbe~\eqref{eq:surface hol} suggest that this is the \emph{holonomy} of a certain parallel transport.

In some sense, this is indeed the case:
if $\Sigma = \bbT^2 = (\bbS^1)^2$ is a 2-torus, then a smooth map $\sigma \colon \bbT^2 \to M$ is equivalent to a loop $\sigma^\vdash \colon \bbS^1 \to LM$, and we have
\begin{equation}
	\hol(\CG; \sigma) = \hol(\CT\CG; \sigma^\vdash)\,.
\end{equation}
Here, the holonomy on the left-hand side is the surface holonomy of the gerbe $\CG \in \Grb^\nabla(M)$, whereas on the right-hand side we have the (ordinary) holonomy of the transgression line bundle $\CT\CG \to LM$.

However, a full parallel transport on a gerbe $\CG \in \Grb^\nabla(M)$ should assign an isomorphism
\begin{equation}
	\PT^\CG_{1,\gamma} \colon \CG_{|\gamma(0)} \to \CG_{|\gamma(1)}
\end{equation}
to every smooth path $\gamma$ in $M$, such that $\PT^\CG_1$ depends smoothly on $\gamma$.
Furthermore, for any smooth homotopy $h \colon \gamma \Rightarrow \gamma'$ of paths, we should specify how the parallel transport changes; that is, any such homotopy should induce a 2-isomorphism
\begin{equation}
	\PT^\CG_{2,h} \colon \PT^\CG_{1,\gamma} \to \PT^\CG_{1,\gamma'}\,.
\end{equation}
These constituents of the parallel transport need to satisfy various further conditions regarding concatenations and thin homotopies.
This notion of parallel transport for gerbes with connection has been developed in~\cite{BMS:Smooth_2Grp_Ext_and_GrbSym}, extending the ideas in~\cite{BMS:NA_magnetic_translations}.

In particular, let $\gamma \colon \bbS^1 \to M$ be a smooth loop in $M$.
It induces an automorphism $\hol(\CG;\gamma) = \PT^\CG_{1,\gamma}$ of $\CG_{|\gamma(0)}$, which is a gerbe on the one-point manifold.
The category of automorphisms of $\CG_{|\gamma(0)}$ is canonically equivalent to the groupoid of one-dimensional hermitean vector spaces.
On can show that there is a canonical isomorphism~\cite[Prop.~4.19]{BMS:Smooth_2Grp_Ext_and_GrbSym}
\begin{equation}
	\hol(\CG;\gamma) \cong \CT\CG_{|\gamma}\,,
\end{equation}
of such vector spaces, which depends smoothly on $\gamma$ and functorially on $\CG$.
Thus, we can interpret the transgression line bundle of $\CG$ simply as the holonomy of the parallel transport $\PT^\CG$ of $\CG$.

Another extension of the transgression formalism and surface holonomy facilitates the inclusion of \emph{D-branes}.
The idea that D-branes and Chan-Paton bundles in string theory with non-trivial $B$-field are related to gerbes and their morphisms goes back to~\cite{Kapustin:D-branes_in_nontriv_B-fields}.
Further work on this geometric perspective on D-branes has been carried out in~\cite{CJM--Holonomy_on_D-branes, Gawedzki-Reis:WZW-branes_and_gerbes, Gawedzki:branes_in_WZW-models_and_gerbes, FSW:Bi-branes, FNSW:BGrbs_and_SurfHol}, for instance.

\begin{definition}
Let $\CG \in \Grb^\nabla(M)$.
A \emph{D-brane} for $\CG$ is a pair $(Q,\CE)$ of a submanifold $Q \subset M$ and a morphism $\CE \colon \CI_0 \to \CG_{|Q}$.
\end{definition}

Let $(Q_i,\CE_i)_{i \in \Lambda}$ be a collection of D-branes for $\CG$.
Using the 2-categorical theory of gerbes from Section~\ref{sec:Gerbes}, one can show that the transgression formalism extends from loops to open paths in the following way:
for each $i, j \in \Lambda$, let $P_{ij}M$ be diffeological space $P_{ij}M$ of smooth paths in $M$ with sitting instants which start on $Q_i$ and end on $Q_j$.
From $\CG$ and $(Q,\CE)$ one can construct a hermitean vector bundle with connection $\scR_{ij} \to P_{ij}M$~\cite{BW:Transgression_of_D-branes}.
These vector bundles come with various structure morphisms related to operations on the level of paths:
for instance, there is a canonical linear map
\begin{equation}
	\scR_{jk|\gamma_1} \otimes \scR_{ij|\gamma_0} \longrightarrow \scR_{ik|\gamma_1 * \gamma_0}
\end{equation}
for each $\gamma_0 \in P_{ij}M$ and $\gamma_1 \in P_{jk}M$ such that their concatenation $\gamma_1 * \gamma_0$ is defined.
These linear maps assemble into a smooth morphism of diffeological vector bundles.

In fact, for any fixed collection of submanifolds $\{Q_i \subset M\}_{i \in \Lambda}$, the vector bundles $\scR_{ij}$ and their structure morphisms depend functorially on the D-branes $\CE_i$ supported on $Q_i$, and one can even reconstruct the gerbe $\CG$ and the D-branes $\CE_i$---up to canonical isomorphism---from just knowing the transgression line bundle $\CT\CG \to LM$, the bundles $\scR_{ij} \to P_{ij}M$, and their structure morphisms.
We refer the reader to~\cite{BW:Transgression_of_D-branes} for the full statement and proof.
From a physical perspective, this makes precise how closed strings in $M$ and open strings stretched between D-branes in $M$ can detect the $B$-field and the twisted Chan-Paton bundles on the D-brane world volumes.

\section{Higher geometric prequantisation}
\label{sec:Higher Geo Quan}

Let $(M,\omega)$ be a symplectic manifold.
Geometric quantisation of $(M,\omega)$ relies, first of all, on a realisation of $\iu\omega$ as the curvature of a connection on a hermitean line bundle $L$ on $M$.
The (compactly supported) square-integrable sections of $L$ then form the prequantum Hilbert space of the system.
Kostant-Souriau prequantisation sends functions $f \in C^\infty(M)$ to operators $O_f$ on this Hilbert space, acting as $O_f = \iu \hbar \nabla^L_{X_f} (-) + f \cdot (-)$.
Here, $X_f$ is the Hamiltonian vector field of $f$.
This, however, does not represent the commutative algebra $C^\infty(M)$ on the prequantum Hilbert space, but rather $C^\infty(M)$ endowed with the (rescaled) Poisson bracket $\iu \hbar \{-,-\}$ induced by $\omega$.

In many geometric situations, symplectic forms are absent (for instance, on any odd-dimensional manifold).
However, related features might still be present.
For example, while the 2-sphere is symplectic, the 3-sphere is not, but instead it carries a closed 3-form which is non-degenerate in a certain sense (see below).
The curvature of $\CG \in \Grb^\nabla(M)$ is a closed 3-form $\curv(\CG)$ on $M$ with integer cycles, and an analogue of the first step in geometric prequantisation of 3-forms should be to realise a 3-form as the curvature of a gerbe with connection, instead of a line bundle.
However, there are two different concepts of what a \emph{higher symplectic form} should be; we will survey both of these and show how gerbes fit into both frameworks.

\subsection{Geometric prequantisation of 3-plectic forms}
\label{sec:3-plectic prequan}

In this section, we recall parts of the theory of \emph{$n$-plectic forms}, focussing on the case of $n = 2$.
For background and details we refer the reader to~\cite{CIL:Geo_of_multisymplectic_mfds, Rogers:Thesis, Rogers:2-plectic_geo_Corant_prequan, FRS:L_infty-algs_of_local_obs, FRS:Higher_U(1)-gerbe_connections}.

\begin{definition}
An \emph{$n$-plectic form} on a manifold $M$ is a closed $(n{+}1)$-form $\omega$ which is non-degenerate, meaning that the map $\iota_{(-)} \omega \colon TM \to \Lambda^nT^*M$, $X \mapsto \omega(X,-,\cdots,-)$ is injective.
\end{definition}

\begin{definition}
Let $(M,\omega)$ be a $2$-plectic manifold.
A \emph{prequantum bundle gerbe} for $(M,\omega)$ is a gerbe $\CG \in \Grb^\nabla(M)$ with connection on $M$ such that $\curv(\CG) = \iu \omega$.
We call the choice of such a gerbe with connection a \emph{prequantisation} of $(M,\omega)$.
\end{definition}

\begin{example}
\label{eg:G as 2-pl mfd}
Let $G$ be a compact, simple, simply connected Lie group with Lie algebra $\frg$.
The Killing form $\<-,-\>$ and commutator on $\frg$ induce a closed 3-form $\omega_3 = \frac{1}{6} \<-, [-,-]\>$ on $G$.
This form is 2-plectic and admits a prequantisation, given by the so-called \emph{basic} gerbe~\cite{Meinrenken:Basic_Gerbe}, or the \emph{tautological gerbe}~\cite{Murray:Bundle_gerbs}.
\qen
\end{example}

It follows from Theorem~\ref{st:classification of Grbs} and Proposition~\ref{st:diffcoho hexagon} that a 2-plectic manifold $(M,\omega)$ admits a prequantisation if and only if $\iu\, \omega \in \Omega^3_{cl,\ZN}(M; \iu\RN)$.
The construction of the Poisson algebra of functions in the symplectic case relies on the notion of Hamiltonian vector fields:
given $f \in C^\infty(M)$, a Hamiltonian vector field for $f$ is a vector field $X_f \in \Gamma(M; TM)$ such that $\iota_{X_f} \omega = \dd f$.
In the $n$-plectic case, Hamiltonian vector fields cannot be associated to functions, but to (certain) $(n{-}1)$-forms:

\begin{definition}
Let $(M,\omega)$ be an $n$-plectic manifold.
A \emph{Hamiltonian $n$-form} is an $(n{-}1)$-form $\eta \in \Omega^n(M)$ such that there exists a vector field $X_\eta \in \Gamma(M; TM)$ with $\iota_{X_\eta} \omega = \dd \eta$.
We denote the vector space of Hamiltonian $(n{-}1)$-forms on $M$ by $\Omega^{n-1}_\Ham(M)$.
\end{definition}

Note that for $n>2$ the map $\iota_{(-)}\omega \colon TM \to \Lambda^n T^*M$ is generally not surjective, so that $\Omega^{n-1}_\Ham(M)$ is, in general, a proper subspace of $\Omega^{n-1}(M)$.
However, since the map is injective, it follows that, for each $\eta \in \Omega^{n-1}_\Ham(M)$, the vector field $X_\eta$ with $\iota_{X_\eta} \omega = \dd \eta$ is unique.
We hence call $X_\eta$ the \emph{Hamiltonian vector field of $\eta$}.
The $n$-plectic version of the Poisson algebra of smooth functions on $M$ is given as follows:
we first recall the definition of $L_\infty$-algebras (also called strongly homotopy Lie algebras)~\cite{LS:Intro_to_SH_LieAlgs, LM:Strongly_Ho_LieAlgs}, see also~\cite[Def.~3.7]{Rogers:Thesis}.

\begin{definition}
An $L_\infty$-algebra is a $\ZN$-graded vector space $L$ with a collection $\{ l_k \colon L^{\otimes k} \to L\, | \, k \in \NN \}$ of skew-symmetric linear maps of degree $|l_k| = k-2$, satisfying the identity
\begin{equation}
\label{eq:L_oo identities}
	\sum_{i + j = m+1} \sum_{\sigma \in \UnSh(i, m-i)} (-1)^\sigma \epsilon(\sigma)\, (-1)^{i(j-1)}
	l_j \big( l_i(v_{\sigma(1)}, \ldots, v_{\sigma(i)}), v_{\sigma(i+1)}, \ldots, v_{\sigma(m)} \big)
	= 0
\end{equation}
for every $m \in \NN$.
Here, $\UnSh(i,j)$ is the set of $(i,j)$-unshuffles, where $\epsilon(\sigma)$ is the Koszul sign arising from applying the permutation $\sigma \in \UnSh(i,j)$ to the vectors $v_1, \ldots, v_{i+j}$, and where $(-1)^\sigma$ is the degree of the permutation $\sigma$.
A \emph{Lie $n$-algebra} is an $L_\infty$-algebra whose underlying graded vector space is concentrated in degrees $0, \ldots, n-1$ (in that case we obtain $l_k = 0$ for all $k > n+1$).
\end{definition}

One can check that $l_1 \eqqcolon \dd$ is a differential, turning $L$ into a chain complex, and that it is a graded derivation with respect to the bracket $l_2 \eqqcolon [-,-]$.
This bracket, however, does not satisfy the Jacobi identity; instead the Jacobi identity is violated up to a coherent set of homotopies.
Morphisms of $L_\infty$-algebras are more intricate to define; instead of doing this directly on the level of $L_\infty$-algebras, one usually passes to the coalgebra description of $L_\infty$-algebras:
if $L$ is a graded vector space, an $L_\infty$-algebra structure is equivalent to a choice of a codifferential on the (non-unital) coalgebra $\bigvee^\bullet L[1]$ of symmetric tensor powers, and morphisms of $L_\infty$-algebras are most elegantly described as morphisms of the associated codifferential coalgebras; see~\cite[Appendix~A]{JRSW:L_oo_and_BV} for a fully detailed account.

\begin{example}
\label{eg:Lie-2 algebra}
A Lie 2-algebra consists of a 2-term chain complex $L_1 \to L_0$, a skew-symmetric bracket $l_2 = [-,-] \colon L \otimes L \to L$ and the skew-symmetric Jacobiator $l_3 = J(-,-,-) \colon L^{\otimes 3} \to L$, which is precisely a chain homotopy of maps $L^{\otimes 3} \to L$ from $x \otimes y \otimes z \mapsto [x, [y,z]]$ to the map $x \otimes y \otimes z \mapsto [[x,z], z] + [y, [x,z]]$, for $x,y,z \in L_0$.
Finally, there is a compatibility relation between $[-,-]$ and $J$ (c.f~\cite[Eq.~3.10]{Rogers:Thesis}).

A morphism of Lie 2-algebras $L \to L'$ is a morphism of complexes $\phi \colon L \to L'$ and a chain homotopy $\Phi$ of maps $L \otimes L \to L'$ from $x \otimes y \mapsto \phi([x,y])$ to $x \otimes y \mapsto [\phi(x), \phi(y)]$, satisfying a compatibility relation (see~\cite[Eq.~3.11]{Rogers:Thesis}).
Such a morphism is called a \emph{quasi-isomorphism} if $\phi$ induces isomorphisms between the homology groups of the complexes $L$ and $L'$.
For full details, see~\cite{BC:Lie_2-algs} or~\cite[Sec.~3.2.1]{Rogers:Thesis}, for instance.
\qen
\end{example}

\begin{theorem}
\emph{\cite[Thm.~3.14]{Rogers:Thesis}}
Let $(M,\omega)$ be an $n$-plectic manifold.
There exists a Lie $n$-algebra $L_\infty(M,\omega)$ with
\begin{myitemize}
\item underlying graded vector space given by $L_0 = \Omega^{n-1}_\Ham(M)$, $L_i = \Omega^{n-1-i}(M)$ for $i = 1, \ldots n-1$, and $L_i = \{0\}$ otherwise,

\item differential $l_1 = \dd$ given by the de Rham differential on $L_i$ with $i >0$, and

\item higher brackets given by
\begin{equation}
	l_k(\alpha_1, \ldots, \alpha_k) =
	\begin{cases}
		0\,, & |\alpha_1 \otimes \cdots \otimes \alpha_k| > 0\,,
		\\
		(-1)^{\frac{k}{2}+1} \iota_{X_{\alpha_1} \wedge \cdots \wedge X_{\alpha_k}} \omega\,, & |\alpha_1 \otimes \cdots \otimes \alpha_k| = 0,\, k \text{ even}\,,
		\\
		(-1)^{\frac{k-1}{2}} \iota_{X_{\alpha_1} \wedge \cdots \wedge X_{\alpha_k}} \omega\,, & |\alpha_1 \otimes \cdots \otimes \alpha_k| = 0,\, k \text{ odd}\,.
	\end{cases}
\end{equation}
\end{myitemize}
\end{theorem}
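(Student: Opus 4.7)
The plan is to verify that the tuple $(L, \{l_k\})$ described in the statement satisfies the $L_\infty$ axioms by direct Cartan-calculus computations, proceeding in three stages.

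First, I would check that the underlying data is well-formed. The differential $l_1$ has degree $-1$: for $i \geq 1$ it is the exterior derivative $\dd \colon L_i = \Omega^{n-1-i}(M) \to L_{i-1}$, and it is zero on $L_0$ as forced by the grading. The nontrivial check is that $\dd$ maps $L_1 = \Omega^{n-2}(M)$ into $L_0 = \Omega^{n-1}_\Ham(M)$, which is immediate since any exact form $\dd\beta$ admits $X_{\dd\beta} = 0$ as its (unique, by non-degeneracy of $\omega$) Hamiltonian vector field; likewise $l_1^2 = 0$ reduces to $\dd^2 = 0$. For $l_k$ with $k \geq 2$, the formula $\iota_{X_{\alpha_1} \wedge \cdots \wedge X_{\alpha_k}}\omega$ on all-$L_0$ inputs produces an $(n+1-k)$-form, sitting in $L_{k-2}$ and matching the required degree $|l_k| = k-2$; for $k > n+1$ it vanishes for dimensional reasons, as needed for a Lie $n$-algebra. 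Graded skew-symmetry follows from the skew-symmetry of interior contraction on multivector fields, since the nontrivial inputs sit in degree zero.

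Second, I would show that $l_2$ lands in $L_0$ by producing a Hamiltonian vector field for its output. Cartan's magic formula $\CL_X = \dd \iota_X + \iota_X \dd$ and $\dd\omega = 0$ give $\CL_{X_\alpha}\omega = \dd\, \iota_{X_\alpha}\omega = \dd^2\alpha = 0$, so every Hamiltonian vector field preserves $\omega$. Combined with $[\CL_X, \iota_Y] = \iota_{[X,Y]}$, this yields $\iota_{[X_\alpha, X_\beta]}\omega = -\dd\, \iota_{X_\beta} \iota_{X_\alpha}\omega$, so $\pm [X_\alpha, X_\beta]$ is a Hamiltonian vector field for $l_2(\alpha, \beta)$. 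In particular, the assignment $\alpha \mapsto X_\alpha$ intertwines $l_2$ with the Lie bracket of vector fields up to sign.

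Third and principally, I would verify the homotopy Jacobi relations~\eqref{eq:L_oo identities} for each $m \leq n+2$. The case $m=1$ is $\dd^2 = 0$. For $m=2$ with mixed-degree inputs, non-degeneracy of $\omega$ forces $X_{\dd\beta} = 0$, so any $l_2$-term with an exact argument vanishes, and the remaining all-$L_0$ instance reduces to the preceding step. The case $m=3$ captures the controlled failure of Jacobi for $l_2$: expanding $l_2(l_2(\alpha,\beta),\gamma) + \text{cyclic}$ via $X_{l_2(\alpha,\beta)} = \pm[X_\alpha, X_\beta]$ and matching it to $\dd\, l_3(\alpha,\beta,\gamma) = \pm \dd\, \iota_{X_\gamma} \iota_{X_\beta} \iota_{X_\alpha}\omega$ using iterated applications of $[\CL_X, \iota_Y] = \iota_{[X,Y]}$, $\CL_{X_\alpha}\omega = 0$, and $\dd\omega = 0$. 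For general $m$, the same mechanism collapses nested Lie derivatives and interior products into contractions of $\omega$ with wedges of Hamiltonian vector fields. The main obstacle throughout is bookkeeping: matching the combinatorics of $(i, m-i)$-unshuffles against the Koszul signs and the case split $(-1)^{k/2+1}$ versus $(-1)^{(k-1)/2}$ in the bracket formulas. I would proceed inductively on $m$, anticipating that the sign analysis is the most delicate step, as carried out in~\cite{Rogers:Thesis}.
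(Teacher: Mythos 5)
The paper does not prove this statement at all --- it is quoted, with attribution, from Rogers' thesis, so there is no in-paper argument to compare against. Judged on its own, your outline is the standard direct verification (and is essentially the route Rogers himself takes): check that the degrees are well-formed, show that $l_2$ of Hamiltonian forms is again Hamiltonian via $\pounds_{X_\alpha}\omega = 0$ and $[\pounds_X,\iota_Y]=\iota_{[X,Y]}$, and then verify the identities~\eqref{eq:L_oo identities} by Cartan calculus. The individual claims you make are all correct, including the two points that do the real work in the low-degree cases: that $X_{\dd\beta}=0$ by non-degeneracy, which kills every bracket with an exact argument and hence all mixed-degree instances of the identities, and that $\iota_{[X_\alpha,X_\beta]}\omega=-\dd\,\iota_{X_\beta}\iota_{X_\alpha}\omega$, which puts $l_2$ into $L_0$.

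The one place where your proposal is a plan rather than a proof is the general-$m$ step. The entire content of the verification is concentrated in a single combinatorial lemma --- that $\dd\,\iota_{X_{\alpha_1}\wedge\cdots\wedge X_{\alpha_m}}\omega$ equals a signed sum over pairs $i<j$ of terms $\iota_{[X_{\alpha_i},X_{\alpha_j}]\wedge X_{\alpha_1}\wedge\cdots\widehat{X_{\alpha_i}}\cdots\widehat{X_{\alpha_j}}\cdots\wedge X_{\alpha_m}}\omega$ --- proved by induction on $m$ from exactly the two Cartan identities you list. Once that lemma is in hand with its signs, the relations~\eqref{eq:L_oo identities} for all-degree-zero inputs follow because only the compositions $l_1\circ l_m$ and $l_{m-1}\circ l_2$ survive (every other $l_j\circ l_i$ vanishes for degree reasons, since $l_i$ with $i\geq 3$ outputs a positive-degree element and $l_j$ with $j\geq 2$ kills positive-degree inputs), and the unshuffle sum then reduces precisely to the pair sum in the lemma. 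You name this mechanism but stop short of isolating and proving the lemma; since the sign bookkeeping you defer to the reference \emph{is} the theorem, a self-contained write-up would need to state and establish that identity explicitly.
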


\begin{definition}
For an $n$-plectic manifold $(M,\omega)$, we call $L_\infty(M,\omega)$ the \emph{Poisson Lie $n$-algebra} assocaited to $(M,\omega)$.
\end{definition}

In the case where $(M,\omega)$ is a $2$-plectic manifold which admits a prequantisation $\CG \in \Grb^\nabla(M)$, one can associate to it the Lie 2-algebra of infinitesimal symmetries of the gerbe with connection $\CG$.
It has recently been proven by Krepski and Vaughan~\cite{KV:Multiplicative_VFs} that this Lie 2-algebra is equivalent to the Poisson Lie 2-algebra of $(M,\omega)$, thus giving $L_\infty(M,\omega)$ a geometric description in terms of vector fields on the prequantum gerbe.
Similar results, though in a less geometric and more homotopy-theoretic flavour, have been obtained in~\cite{FRS:L_infty-algs_of_local_obs, FRS:Higher_U(1)-gerbe_connections}.

The explicit description of infinitesimal symmetries of gerbes in terms of local data goes back to~\cite{Collier:Sym_of_gerbes, FRS:L_infty-algs_of_local_obs, FRS:Higher_U(1)-gerbe_connections} and has recently been recast in global terms and the language of bundle gerbes in~\cite{KV:Multiplicative_VFs}.
This uses the theory of multiplicative vector fields on Lie groupoids, introduced in~\cite{MX:Lifting_and_multi_VFs}.
In particular, given a bundle gerbe $\CG = (\pi \colon Y \to M, L, \mu)$, we will from now on trade the line bundle $L$ for its underlying $\sfU(1)$-bundle, which we denote by $P$ (note that this neither loses nor adds information).
The structure of the bundle gerbe gives rise to smooth maps $s, t \colon P \to Y$ and $s_0 \colon Y \to P$, and together with $\mu \colon d_0^*P \otimes d_2^*P \to d_1^*P$, we obtain a Lie groupoid $(P \rightrightarrows Y)$ from these data.
We shall not describe multiplicative vector fields on Lie groupoids in full generality here, but restrict ourselves to the specific case of multiplicative vector fields on gerbes.

\begin{definition}
\cite[Prop.~3.9, Cor.~3.18]{KV:Multiplicative_VFs}
Let $\CG = (\pi \colon Y \to M, P, \mu)$ be a gerbe on $M$.
A \emph{multiplicative vector field on $\CG$} is a pair $\xi = (\xi_0, \xi_1)$, where $\xi_0 \in \Gamma(Y; TY)$ and $\xi_1 \in \Gamma(P; TP)$ satisfying that $\xi_1$ is $\sfU(1)$-invariant, that $d_{i*}\xi_1 = \xi_0$ for $i = 0,1$, and that
\begin{equation}
	\mu_{*|(y_0,y_1,y_2)}(\xi_{1|(y_1,y_2)} \otimes \xi_{1|(y_0,y_1)}) = \xi_{1|(y_0,y_2)}
\end{equation}
for all $(y_0,y_1,y_2) \in Y^{[3]} = \cC Y_2$.
\end{definition}

Note that $\xi = (\xi_0,\xi_1)$ is denoted $(\widetilde{\xi}, \check{\xi})$ in~\cite{KV:Multiplicative_VFs}.
We now consider connection-preserving multiplicative vector fields on $\CG \in \Grb^\nabla(M)$.
The connection on $\CG$ consists of a connection 1-form $A \in \Omega^1(P; \iu\RN)$ and a curving $B \in \Omega^2(Y; \iu\RN)$.
A multiplicative vector field $\xi$ on $\CG$ \emph{is connection preserving} if there exists $\alpha \in \Omega^1(Y; \iu\RN)$ such that
\begin{equation}
	(\pounds_{\xi_1}A, \pounds_{\xi_0}B) = (\dd \alpha, p^*\delta \alpha) \eqqcolon \rmD \alpha\,,
\end{equation}
where $\pounds$ denotes the Lie derivative.
This can be seen as the requirement that $(\pounds_{\xi_1}A, \pounds_{\xi_0}B)$ be exact in the complex obtained by applying Construction~\ref{cons:totalisation} to the two-term complex of sheaves $\Omega^1 \to \Omega^2$ and the simplicial manifold $\cC Y$.
Krepski and Vaughan then define an appropriate notion of morphisms between such connection-preserving vector fields, following~\cite{BEL:Lie_2-algs_of_VFs}; these are obtained as certain sections of the Lie algebroid associated to the Lie groupoid $(P \rightrightarrows Y)$; for details we refer to~\cite{KV:Multiplicative_VFs}.
There also exists a nice treatment of multiplicative vector fields and their relation to 2-plectic quantisation in~\cite{SW--PreQuan_for_2plectic_Mfds}, where the authors work specifically with lifting gerbes for projective unitary bundles on $M$.

\begin{proposition}
\emph{\cite[Cor.~3.18, Prop.~4.8]{KV:Multiplicative_VFs}}
Let $\CG \in \Grb^\nabla(M)$.
There exists a Lie 2-algebra $\bbX^\nabla(\CG)$ whose level-zero part is the vector space of connection-preserving vector fields on $\CG$, with bracket
\begin{equation}
	\big[ (\xi_0, \xi_1, \alpha),\, (\zeta_0, \zeta_1, \beta) \big]
	= \big( [\xi_0, \zeta_0],\, [\xi_1, \zeta_1],\, \pounds_{\xi_0} \beta - \pounds_{\zeta_0} \alpha \big)\,.
\end{equation}
\end{proposition}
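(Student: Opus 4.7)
The plan is to upgrade the strict Lie algebra of connection-preserving multiplicative vector fields---whose existence and bracket on the $(\xi_0,\xi_1)$-components we may take from \cite[Cor.~3.18]{KV:Multiplicative_VFs}---to a Lie 2-algebra by treating the witness $\alpha \in \Omega^1(Y; \iu\RN)$ as coherence data rather than a mere property, so that its gauge ambiguity supplies the level-one part of an $L_\infty$-structure.

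First I would pin down the underlying 2-term complex. If $(\xi_0, \xi_1, \alpha)$ and $(\xi_0, \xi_1, \alpha')$ have the same underlying multiplicative vector field, then $\alpha - \alpha'$ lies in $\ker(\rmD)$, so $\dd(\alpha - \alpha') = 0$ and $p^*\delta(\alpha - \alpha') = 0$; since $p \colon P \to \cC Y_1$ is a surjective submersion and by Lemma~\ref{st:Cech coho on ssub} applied to $\Omega^1(\cC Y \to M; \iu\RN)$, we obtain a unique closed $\gamma \in \Omega^1(M; \iu\RN)$ with $\alpha - \alpha' = \pi^*\gamma$. This suggests taking $L_1 = C^\infty(M; \iu\RN)$ with differential $l_1(f) = (0, 0, \pi^*\dd f) \in L_0$, so that the image of $l_1$ encodes the exact gauge shifts of $\alpha$ and the kernel of $l_1$ recovers the locally constant $\iu\RN$-valued functions on $M$.

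Next I would verify that the given bracket on $L_0$ is well-defined and extend it. Closure under the connection-preservation condition is a Cartan-calculus check: using $[\pounds_X, \pounds_Y] = \pounds_{[X,Y]}$, $[\pounds_X, \dd] = 0$, and the commutativity of $\pounds$ with $\delta$ and with $p^*$, one confirms that $\pounds_{\xi_0}\beta - \pounds_{\zeta_0}\alpha$ witnesses connection-preservation for $[(\xi_0,\xi_1),(\zeta_0,\zeta_1)]$. The mixed bracket is determined by descent: since $\xi_0$ is $\pi$-projectable to a well-defined vector field $\xi_0^M$ on $M$ (a consequence of multiplicativity together with the fact that the orbit manifold of $(P \rightrightarrows Y)$ is $M$), one sets $l_2((\xi_0,\xi_1,\alpha), f) = \pounds_{\xi_0^M} f$.

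Finally I would check the $L_\infty$-identities. Expanding the cyclic double-commutator of three level-zero elements in the $\alpha$-slot and applying $\pounds_{[X_0,Y_0]} = [\pounds_{X_0}, \pounds_{Y_0}]$, every term cancels against another in the cyclic sum, so the Jacobi identity holds \emph{strictly} on $L_0$; consequently $l_3$ may be chosen to vanish and $\bbX^\nabla(\CG)$ is actually a strict Lie 2-algebra (equivalently, a crossed module of Lie algebras). The main obstacle will be the derivation compatibility $l_2(X, l_1 f) = l_1\, l_2(X, f)$, which amounts to
\begin{equation}
\pounds_{\xi_0^M}(\pi^*\dd f) = \pi^*\dd(\pounds_{\xi_0^M} f);
\end{equation}
this reduces to $[\pounds_{\xi_0^M}, \dd] = 0$ on $M$ together with naturality of $\pi^*$, after which the remaining $L_\infty$-coherences are routine Cartan-calculus computations on $\cC Y$ exploiting the injectivity of $\pi^*$.
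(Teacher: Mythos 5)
Your Cartan-calculus verifications are sound as far as they go: the bracket does close on connection-preserving triples, the identity $\pounds_{[X,Y]} = [\pounds_X,\pounds_Y]$ makes the Jacobi identity hold \emph{strictly} in all three slots, and the $\pi$-projectability of $\xi_0$ does follow from $d_{i*}\xi_1 = \xi_0$ for $i=0,1$ together with $\pi \circ d_0 = \pi \circ d_1$. So you have produced \emph{a} strict Lie $2$-algebra with the stated level-zero part and bracket, and your identification of $\ker(\rmD)$ on the witness forms via Lemma~\ref{st:Cech coho on ssub} is correct.

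The gap is the level-one part. The proposition refers to a specific object $\bbX^\nabla(\CG)$ constructed in \cite{KV:Multiplicative_VFs} following \cite{BEL:Lie_2-algs_of_VFs}, and the surrounding text states explicitly that the degree-one piece consists of (connection-compatible) sections of the Lie algebroid of the Lie groupoid $(P \rightrightarrows Y)$ --- a vector bundle over $Y$ of rank $\dim Y - \dim M + 1$ --- with $l_1$ sending such a section to the inner multiplicative vector field it generates via the anchor and the invariant extensions. Your choice $L_1 = C^\infty(M;\iu\RN)$ with $l_1(f) = (0,0,\pi^*\dd f)$ only records part of the gauge ambiguity of the witness $\alpha$ over the \emph{zero} vector field; its image misses all inner symmetries. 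Consequently $H_0$ of your complex retains, for instance, the classes of vertical connection-preserving multiplicative vector fields arising from algebroid sections, which in $\bbX^\nabla(\CG)$ are precisely what $l_1$ kills. Your Lie $2$-algebra is therefore not quasi-isomorphic to $\bbX^\nabla(\CG)$ in general, and the invertible butterfly to the Poisson Lie $2$-algebra $L_\infty(M,\omega)$ stated immediately afterwards would fail for it. The existence claim read in isolation is satisfied by your construction, but to prove the proposition as intended you must take $L_1$ to be the sections of the Lie algebroid of $(P \rightrightarrows Y)$ (together with the appropriate connection data) and define $l_1$ and the action of $L_0$ on $L_1$ as in the cited construction; the strict-Jacobi computation you carried out remains a useful ingredient of that argument.
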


The geometric interpretation of the Poisson Lie 2-algebra associated to the 2-plectic manifold $(M,\omega)$ is the following result.
It uses the notion of \emph{butterflies} between Lie 2-algebras.
These provide a weak notion of morphisms of Lie 2-algebras which describes the localisation of the 2-category of Lie 2-algebras at the quasi-isomorphisms; essentially, the existence of an invertible butterfly $L \to L'$ means that there is a finite chain $L \leftarrow J_0 \rightarrow J_1 \leftarrow J_2 \rightarrow \cdots \leftarrow J_n \rightarrow L'$ of quasi-isomorphisms of Lie 2-algebras.
For details, see~\cite{Noohi:Integrating_mps_of_Lie-2-algs}, where this theory was developed.

\begin{theorem}
\emph{\cite[Thm.~5.1]{KV:Multiplicative_VFs}}
Let $(M,\omega)$ be a 2-plectic manifold with prequantisation $\CG \in \Grb^\nabla(M)$.
Then, there is an invertible butterfly of Lie 2-algebras $L_\infty(M,\omega) \to \bbX^\nabla(\CG)$.
\end{theorem}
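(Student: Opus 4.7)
The plan is to produce the invertible butterfly by constructing an intermediate Lie algebra $E$ together with a butterfly diagram in the sense of Noohi~\cite{Noohi:Integrating_mps_of_Lie-2-algs}. The central geometric input is the pullback identity $\pi^*(\iu\omega) = \dd B$ on $Y$ coming from $\curv(\CG) = \iu\omega$; this is what allows the global data of a Hamiltonian 1-form on $M$ to be translated into vector-field data on $Y$ and $P$ compatible with the gerbe connection $(A, B)$ and the associator $\mu$.

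First I would build candidate maps on the underlying chain complexes. In degree $1$, a function $f \in C^\infty(M)$ is sent to a vertical infinitesimal gauge symmetry of $\CG$: the multiplicative vector field with $\xi_0 = 0$ and $\xi_1$ the $\sfU(1)$-fundamental vector field on $P$ with infinitesimal generator $\pi^*f$; the required correction form is simply $\alpha = \pi^*f$, and the Krepski--Vaughan connection-preservation relations hold automatically from $\iota_{\xi_1}A = \pi^*f$. In degree $0$, a Hamiltonian 1-form $\eta$ with vector field $X_\eta$ is lifted by first choosing an Ehresmann connection on $\pi \colon Y \to M$ to produce a horizontal lift $\widetilde{X}$ of $X_\eta$, then horizontally lifting $\widetilde{X}$ along the $\sfU(1)$-bundle $P$ using the connection $A$; the defining identity $\iota_{X_\eta}\omega = \dd\eta$, pulled back via $\pi^*(\iu\omega) = \dd B$, then supplies a form $\alpha \in \Omega^1(Y; \iu\RN)$ rendering the resulting triple connection-preserving, while compatibility with the associator $\mu$ over $Y^{[3]}$ forces the triple to descend to a genuine multiplicative vector field on $\CG$.

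Because these lifts depend on non-canonical choices, the assignments do not glue into a strict morphism of Lie 2-algebras, which is why the butterfly framework is needed. I would take $E$ to consist of tuples $(\eta, \widetilde{X}, \xi_1, \alpha)$ packaging a Hamiltonian 1-form together with a full choice of compatible lifts on $Y$ and $P$; there are natural projections $E \to \Omega^1_\Ham(M)$ (forgetting the lifts) and $E \to \bbX^\nabla(\CG)$ (assembling the lifts into a multiplicative vector field using $\mu$), which, together with suitable maps from $C^\infty(M)$ and from the degree-$1$ part of $\bbX^\nabla(\CG)$ into $E$, assemble into the butterfly diagram. Quasi-isomorphism---hence invertibility of the butterfly---reduces to showing that the spaces of possible lifts are acyclic in the relevant \v{C}ech double complex, which follows from Lemma~\ref{st:Cech coho on ssub} applied to the augmented simplicial manifold $\cC Y \to M$.

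The main obstacle will be matching the higher-bracket structure. On the Poisson side the Lie 2-algebra is $\omega$-twisted: the binary bracket is $\pm\iota_{X_{\eta_1}\wedge X_{\eta_2}}\omega$ and the Jacobiator is the function $\pm\iota_{X_{\eta_1}\wedge X_{\eta_2}\wedge X_{\eta_3}}\omega$, whereas on the gerbe side the bracket on triples $(\xi_0,\xi_1,\alpha)$ is the strict commutator of multiplicative vector fields with naive Jacobiator vanishing. These discrepancies must be absorbed precisely by the choices of lifts and by $\mu$: the failure of horizontal lifts to commute is governed by $\dd B = \pi^*(\iu\omega)$, and the failure of $\mu$-compatibility under threefold bracketing reproduces the Jacobiator. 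Carrying out this identification with correct signs and the right $\mu$-cocycle bookkeeping, so that the butterfly is simultaneously well-defined and induces isomorphisms on cohomology, is the technical heart of the theorem.
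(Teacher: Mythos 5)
The statement you were asked to prove is not proved in this paper at all: it is quoted verbatim from \cite[Thm.~5.1]{KV:Multiplicative_VFs}, and the survey gives only the surrounding definitions (multiplicative vector fields, the Lie 2-algebra $\bbX^\nabla(\CG)$, butterflies), so there is no in-paper argument to compare yours against. That said, your outline does match the shape of the argument in the cited source: the centre of the butterfly is a Lie algebra of connection-preserving multiplicative vector fields decorated with a compatible Hamiltonian $1$-form, and the two wings forget one half of the data or the other. So the strategy is the right one.

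As a proof, however, several of your key steps are asserted rather than established, and two are not quite right. First, the existence of a $\sfU(1)$-invariant, $\mu$-compatible lift $\xi_1$ on $P$ covering $\xi_0$ is exactly where the prequantisation hypothesis does its work: the $A$-horizontal lift of $(\xi_0,\xi_0)$ fails to be multiplicative by a term controlled by $\curv(A)=-\delta B$, and it must be corrected by a vertical term built from $\eta$ and $\iota_{\xi_0}B$ using $\iota_{X_\eta}\omega=\dd\eta$ together with $\pi^*(\iu\omega)=\dd B$; your sketch treats this as automatic. Second, invertibility of the butterfly is not the acyclicity statement of Lemma~\ref{st:Cech coho on ssub}: it is short-exactness of the wing $C^\infty(M)\to E\to \bbX^\nabla_0(\CG)$. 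Surjectivity there is the (true, but different) claim that \emph{every} connection-preserving multiplicative vector field covers a Hamiltonian pair $(X,\eta)$ on $M$, which one extracts from the curving part $\pounds_{\xi_0}B=\dd\alpha$ of the relation $(\pounds_{\xi_1}A,\pounds_{\xi_0}B)=\rmD\alpha$ via Cartan's formula and descent of $\alpha-\iota_{\xi_0}B$ along $\cC Y\to M$; exactness in the middle (kernel of $E\to\bbX^\nabla_0(\CG)$ equals the image of $C^\infty(M)$) must then be checked separately. Third, the degree-one part of $\bbX^\nabla(\CG)$ consists of certain sections of the Lie algebroid of $(P\rightrightarrows Y)$, not of vertical vector fields on $P$: your ``degree $1$'' assignment actually lands in degree zero, and your correction form $\alpha=\pi^*f$ is a function rather than the required $1$-form (you mean $\pi^*\dd f$). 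Finally, you correctly flag the matching of $l_2$ and the Jacobiator $l_3$ against the commutator bracket and the failure of $\mu$-compatibility as the technical heart of the theorem, but that is precisely the part left uncarried out, so the proposal remains a plausible programme rather than a proof.
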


\begin{remark}
There is also a relation between certain infinitesimal symmetries of gerbes with connection and the Lie 2-algebra of sections of the Courant algebroid associated to $(M,\omega)$.
This was first described in works of Rogers~\cite{Rogers:Thesis} and worked out later in more detail in~\cite{Collier:Sym_of_gerbes, FRS:L_infty-algs_of_local_obs, FRS:Higher_U(1)-gerbe_connections}.
Such a relation is expected from the link between gerbes and Courant algebroids in generalised geometry observed already in~\cite{Hitchin:Special_Lagr_Lectures, Hitchin:Generalised_CY, Gualtieri:Thesis}, for instance.
\qen
\end{remark}

\begin{remark}
In this section, we have focussed purely on the \emph{observables} in geometric prequantisation of 2-plectic manifolds.
It is a different---but related---problem to describe the \emph{states} in this theory.
Since the line bundle of geometric prequantisation is replaced by a gerbe with connection $\CG$ on $M$, one should expect the states to consist of sections of $\CG$.
This idea was first investigated in~\cite{Rogers:Thesis}, and a categorified Hilbert space of sections was constructed in~\cite{BSS--HGeoQuan, Bunk--Thesis}.
However, these sections---and even more so how the observables act on them---are still not understood well enough, and there is broad scope for further research.
Let us point out the recent paper~\cite{Safronov:Shifted_GeoQuan}, which also makes progress in this direction.
\qen
\end{remark}

\subsection{Shifted symplectic forms}
\label{sec:shifted symplectic}

Finally, we illustrate another approach to higher-degree generalisations of symplectic manifolds, going by the name of \emph{shifted symplectic structures}.
Their introduction in~\cite{PTVV:Shifted_symplectic_structures} has lead to significant advances in (derived) algebraic geometry.
In differential geometry, (1-)shifted symplectic forms have so far mostly appeared in the study of quasi-symplectic groupoids~\cite{BCWZ--Integration_of_twisted_Dirac, Xu:Momentum_and_Morita, LGX--Pre-quasi-sym_quant_via_Grbs}, but see~\cite{Pridham:ShPois_in_DerDiffGeo} for a perspective from derived differential geometry.

Here, we consider shifted symplectic forms on simplicial manifolds~\cite{Getzler:Slides_on_Stacks}.
Let $X = \{X_k, d_i, s_i\}$ be a simplicial manifold (cf.~Section~\ref{sec:spl view on LBuns}).
If a 2-form $\omega_2$ on $X_k$ is not closed, its failure to be so could be an exact term with respect to the \v{C}ech differential, i.e.~there could be a 3-form $\omega_3$ on $X_{k-1}$ such that $\dd \omega_2 = \delta \omega_3$.
The 3-form $\omega_3$ could now again fail to be closed up to a \v{C}ech-exact term, and so on.
The central idea for shifted symplectic forms is to replace closed 2-forms by 2-forms closed up to a coherent chain of such higher-degree forms.
Making this rigorous relies simply on Construction~\ref{cons:totalisation}.
In the presentation of this material, we heavily draw from~\cite{Getzler:Slides_on_Stacks}.

For $k \in \NN_0$, consider the cochain complex of sheaves of abelian groups
\begin{equation}
	\tau_{\geq k} \Omega^\bullet[k] = \big(
	\begin{tikzcd}
		0 \ar[r] & \Omega^k \ar[r, "\dd"] & \Omega^{k+1} \ar[r, "\dd"] & \Omega^{k+2} \ar[r, "\dd"] & \cdots
	\end{tikzcd}
	\big)\,.
\end{equation}
Note that $\Omega^k$ sits in degree zero in this complex.
We remark that $\tau_{\geq k} \Omega^\bullet[k]$ is an injective resolution of the sheaf $\Omega^k_\cl$ of \emph{closed} $k$-forms.

\begin{definition}
\cite{PTVV:Shifted_symplectic_structures, Getzler:Slides_on_Stacks}
The \emph{complex of closed $k$-forms} on a simplicial manifold $X$ is
\begin{equation}
	\CA^k_\cl(X) = \Tot \big( \tau_{\geq k} \Omega^\bullet[k](X) \big)\,.
\end{equation}
A \emph{closed $k$-form of degree $p$ on $X$} is a degree-$p$ cocycle $\omega \in Z^p(\CA^k_\cl(X))$.
\end{definition}

Explicitly, we obtain from Construction~\ref{cons:totalisation} that a closed $k$-form of degree $p$ on $X$ is a $p$-tuple $\omega = (\omega_{p+k}, \omega_{p+k-1}, \ldots, \omega_{k+1}, \omega_k)$ with $\omega_{k+i} \in \Omega^{k+i}(X_{p-i})$, satisfying $\rmD \omega = 0$, i.e.
\begin{align}
	\dd \omega_{p+k} &= 0\,,
	\\
	\dd \omega_{k+i+1} + (-1)^i \delta \omega_{k+i} &= 0\,, \qquad \text{for } i = 0, \ldots, p-1\,,
	\\
	\delta \omega_k &= 0\,.
\end{align}

\begin{example}
\label{eg:closed (2,2)-form on BG}
Let $G$ be a compact, simple, simply connected Lie group.
Recall the simplicial manifold $\rmB G$ from Example~\ref{eg:BG as spl Mfd}.
Further, recall the closed 3-form $\omega_3 \in \Omega^3(G)$ from Example~\ref{eg:G as 2-pl mfd}.
Let $\mu_G$ be the (left-invariant) Maurer-Cartan form on $G$, let $\overline{\mu}_G$ be the right-invariant Maurer-Cartan form on $G$, and define the 2-form
\begin{equation}
\label{eq:omega_2 on BG}
	\omega_2 = \frac{1}{2} \<d_2^*\mu_G, d_0^*\overline{\mu}_G\>
\end{equation}
on $G^2 = \rmB G_2$.
Here, $d_2$ and $d_0$ are the face maps of the simplicial manifold $\rmB G$.
Then,
\begin{equation}
	\omega = (0, \omega_3, \omega_2) \in Z^2 \big( \CA^2_\cl(\rmB G) \big)
\end{equation}
is a closed 2-form of degree two on $\rmB G$ (see, for instance,~\cite{Waldorf:Multiplicative_Gerbes}).
\qen
\end{example}

For a simplicial manifold $X$, we can further define a tangent bundle (or tangent complex) in the following sense:
the tangent bundles $\{TX_k \to X_k\}_{k \in \NN_0}$  each pull back to $X_0$ along the compositions $s_0 \circ \cdots \circ s_0 \colon X_0 \to X_k$.
As a consequence of the simplicial identities~\eqref{eq:spl identities}, the differentials of the face and degeneracy maps of $X$ induce on the collection of these pullbacks the structure of a simplicial vector bundle $\bbT^sX$ on $X_0$; that is, $\bbT^sX$ is a collection $\{\bbT^s_k X \to X_0\}_{k \in \NN_0}$ of vector bundles on $X_0$, endowed with morphisms of vector bundles $\partial_i \colon \bbT^s_k X \to \bbT^s_{k-1}X$ and $\sigma_i \colon \bbT^s_k X \to \bbT^s_{k+1} X$ which satisfy the simplicial identities~\eqref{eq:spl identities} (i.e.~$\bbT^sX$ is a simplicial object in the category of vector bundles on $X_0$).
We can now apply a dual version of Construction~\ref{cons:AltFace coch complex for csp VSp} to $\bbT^s X$ (dual in the sense that cosimplicial objects are replaced by simplicial ones, and cochain complexes by chain complexes) to obtain a chain complex
\begin{equation}
	\bbT X = \big(
	\begin{tikzcd}
		0 & \bbT^s_0 X \ar[l]
		& \bbT^s_1 X \ar[l, "\Delta"']
		& \bbT^s_2 X \ar[l, "\Delta"']
		& \cdots \ar[l, "\Delta"']
	\end{tikzcd}
	\big)\,,
\end{equation}
where $\Delta \colon (\bbT X)_k \to (\bbT X)_{k-1}$ is given by $\Delta = \sum_{i = 0}^k (-1)^i \partial_i$.
We remark that the construction of $\bbT X$ given in~\cite{Getzler:Slides_on_Stacks} is not isomorphic to our construction here, but it is canonically \emph{quasi-}isomorphic to our definition (this is due to the quasi-isomorphism between the normalised chain complex and the Moore complex associated to a simplicial abelian group~\cite[Thm.~III 2.4]{GJ:Spl_HoThy}).

\begin{definition}
\label{def:tangent complex}
Let $X$ be a simplicial manifold.
The chain complex $(\bbT X, \Delta)$ of vector bundles on $X_0$ is called the \emph{tangent complex of $X$}.
\end{definition}

Let $\omega = (\omega_{2+p}, \ldots, \omega_2)$ be a closed 2-form of degree $p$ on a simplicial manifold $X$.
Consider two elements $\xi, \xi'$ in the (fibre of the) tangent complex $\bbT_{|x} X$ of $X$ at $x \in X_0$ whose degrees $|\xi| \eqqcolon a$ and $|\xi| \eqqcolon b$ satisfy $a + b = p$.
We define the pairing
\begin{equation}
\label{eq:pairing from derived 2-form}
	\omega(\xi, \xi') \coloneqq \sum_{\varrho \in \Sh(a,b)} (-1)^\varrho\, \omega_2 \big( (\sigma_{\varrho(n-1)} \circ \cdots \circ \sigma_{\varrho(a)})_* \xi,\, (\sigma_{\varrho(a-1)} \circ \cdots \circ \sigma_{\varrho(0)})_* \xi' \big)\,,
\end{equation}
where $\Sh(a,b)$ is the set of $(a,b)$-shuffles.
One can now show that this pairing is (graded) antisymmetric, and that $\Delta$ is (graded) self-adjoint with respect to $\omega$.
In particular, the pairing~\eqref{eq:pairing from derived 2-form} induces a pairing of degree two on the homology $\rmH_\bullet(\bbT X_{|x}, \Delta)$ for each $x \in X_0$~\cite{Getzler:Slides_on_Stacks}.

\begin{remark}
The explicit form for the pairing~\eqref{eq:pairing from derived 2-form} and its properties arise form the Eilenberg-Zilber map, which induces a quasi-isomorphism $\bbT X \otimes \bbT X \to \bbT X \tilde{\otimes} \bbT X$ between the usual tensor product $\bbT X \otimes \bbT X$ of chain complexes and the level-wise tensor product, whose level-$k$ vector space is $(\bbT X \tilde{\otimes} \bbT X)_k = \bbT_k X \otimes \bbT_k X$; for background on the Eilenberg-Zilber map and its properties, see, for instance,~\cite[Sec.~29]{May:Simplicial_Objects}.
\qen
\end{remark}

\begin{definition}
Let $X$ be a symplectic manifold.
A \emph{$p$-shifted symplectic form on $X$} is a closed 2-form $\omega$ of degree $p$ on $X$ for which the pairing~\eqref{eq:pairing from derived 2-form} induces a non-degenerate pairing on the homology of $\bbT X$ (at every point $x \in X_0$).
A \emph{$p$-shifted symplectic simplicial manifold} is a pair $(X,\omega)$ of a simplicial manifold $X$ and a $p$-shifted symplectic form $\omega$ on $X$.
If $Y \to M$ is a surjective submersion, we say that a $p$-shifted symplectic form $\omega$ on the \v{C}ech nerve $\cC Y$ is a \emph{$p$-shifted symplectic form on $M$}.
\end{definition}

\begin{example}
\label{eg:BG as 2-shifted symplectic}
Consider the simplicial manifold $\rmB G$ and its closed 2-form $\omega = (0,\omega_3, \omega_2)$ of degree two from Example~\ref{eg:closed (2,2)-form on BG}.
The manifold $(\rmB G)_0 = *$ consists of a single point.
Thus, the tangent complex $\bbT \rmB G$ is a chain complex of vector bundles on the point; that is, it is simply a chain complex of real vector spaces.
We find that $(\bbT \rmB G)_k = \frg^{k-1}$, where $\frg$ is the Lie algebra of $G$.
It remains to understand the differential on $\bbT \rmB G$.
Since the differential of the multiplication $G^2 \to G$ at the neutral element $e \in G$ is simple the addition in $\frg$, one obtains the following explicit expressions:
\begin{alignat}{3}
	&\Delta \colon \frg^2 \to \frg\,,
	&& \quad (\xi_1, \xi_2) &&\longmapsto \xi_1 - (\xi_1 + \xi_2) + \xi_2 = 0\,,
	\\
	&\Delta \colon \frg^3 \to \frg^2\,,
	&& \quad (\xi_1, \xi_2, \xi_3) &&\longmapsto (\xi_2,\xi_3) - (\xi_1 + \xi_2, \xi_3) + (\xi_1, \xi_2 + \xi_3) - (\xi_1, \xi_2)
	= (-\xi_1, \xi_3)\,,
	\\
	&\Delta \colon \frg^4 \to \frg^3\,,
	&& \quad (\xi_1, \xi_2, \xi_3, \xi_4) &&\longmapsto
	(0, \xi_2 + \xi_3, 0)
\end{alignat}
and so on.
Let $\frg[1]$ denote the chain complex with $\frg$ in degree one and all other degrees trivial.
The morphism $\frg[1] \to \bbT \rmB G$, $\xi \mapsto \xi$ is a quasi-isomorphism, inducing
\begin{equation}
	\rmH_\bullet(\frg[1], 0) \arisom \rmH_\bullet(\bbT \rmB G, \Delta)\,.
\end{equation}
Finally, consider the pairing induced by $\omega$.
Since we are interested in the on homology, it suffices to work with $\frg[1]$ instead of $\bbT \rmB G$.
The only non-trivial case where we have to check its non-degeneracy is for two tangent vectors of degree one, i.e.~$\xi, \xi' \in \frg$.
There, we obtain
\begin{align}
	\omega(\xi, \xi') &= \omega_{2|(e,e)}(\sigma_{1*}\xi, \sigma_{0*} \xi') - \omega_{2|(e,e)}(\sigma_{0*}\xi, \sigma_{1*} \xi')
	\\*
	&= \omega_{2|(e,e)} \big( (\xi,0), (0,\xi') \big) - \omega_{2|(e,e)} \big( (0,\xi), (\xi', 0) \big)
	\\*
	&= \<\xi, \xi'\> - \<0,0\>\,,
\end{align}
where in the last step we have used the explicit form~\eqref{eq:omega_2 on BG} of $\omega_2$.
Since the Killing form $\<-,-\>$ on $\frg$ is non-degenerate, it follows that $\omega$ is a 2-shifted symplectic form on $\rmB G$.
(This example is by no means new; it can be found in~\cite{PTVV:Shifted_symplectic_structures, Safronov--Quasi-Ham_reduction_via_classical_CSThy, Getzler:Slides_on_Stacks}, for instance.)
\qen
\end{example}

\begin{remark}
Observe the crucial difference from the 2-plectic point of view:
in the 2-shifted symplectic case, the 2-form $\omega_2$ is responsible for the non-degeneracy, whereas in the 2-plectic case it is the 3-form $\omega_3$ on $G$.
The role of $\omega_3$ in the 2-shifted symplectic setting is completely different:
it is purely to establish the (derived) closedness of $\omega_2$.
\qen
\end{remark}

The reason we have included the shifted symplectic perspective is that gerbes provide a promising tool for geometric quantisation in this context as well.
This extends~\cite{LGX--Pre-quasi-sym_quant_via_Grbs} and follows Safronov's recent article~\cite{Safronov:Shifted_GeoQuan}, which also proposes (higher) gerbes as a replacement of line bundles in shifted geometric quantisation.
We propose the following definition, adapted from~\cite{Safronov:Shifted_GeoQuan}:

\begin{definition}
Let $(X,\omega = (\omega_3, \omega_2))$ be a 1-shifted symplectic manifold.
A \emph{1-shifted prequantisation} of $(X,\omega)$ is a triple $(\CG, \CE, \psi)$ of a gerbe $\CG \in \Grb^\nabla(X_0)$ with $\curv(\CG) = \omega_3$, an isomorphism $\CE \colon d_1^*\CG \to d_0^*\CG$ over $X_1$ with $\curv(\CE) = \omega_2$ and a parallel 2-isomorphism $\psi \colon d_0^*\CE \circ d_2^* \CE \to d_1^*\CE$ over $X_2$, which satisfies an associativity condition over $X_3$.
\end{definition}

This provides prequantisations for quasi-symplectic groupoids even when $\omega_3$ is not exact, thus circumventing the exactness constraint in~\cite{LGX--Pre-quasi-sym_quant_via_Grbs}.
If $X = M \dslash G$ for some action of a Lie group $G$ on a manifold $M$ (cf.~Example~\ref{eg:M//G as spl Mfd}), the data $(\CG, \CE,\psi)$ are precisely an equivariant gerbe with connection as defined in~\cite{BMS:Smooth_2Grp_Ext_and_GrbSym}, whose curvatures coincide with $\omega$.

For $p$-shifted symplectic simplicial manifolds with $p > 1$, we would, in general, have to pass to higher gerbes in order to prequantise these simplicial manifolds.
However, in the case of the 2-shifted symplectic simplicial manifold $(\rmB G, \omega)$ from Example~\ref{eg:BG as 2-shifted symplectic} we are lucky:
since $\rmB G_0 = *$, any higher gerbe on $\rmB G_0$ is necessarily trivial (see Definition~\ref{def:n-gerbe} and Proposition~\ref{st:diffcoho hexagon}), and we can define:

\begin{definition}
A \emph{2-shifted prequantisation of $(\rmB G, \omega)$} is a triple $(\CG, \CE, \psi)$ of a gerbe $\CG \in \Grb^\nabla(\rmB G_1)$ with $\curv(\CG) = \omega_3$, an isomorphism $\CE \colon d_2^*\CG \otimes d_0^*\CG \to d_1^*\CG$ over $\rmB G_2$ with $\curv(\CE) = \omega_2$, and a parallel 2-isomorphism $\psi \colon d_1^*\CE \circ d_3^*\CE \to d_2^*\CE \circ d_0^*\CE$ over $\rmB G_3$, satisfying a further coherence condition over $\rmB G_4$.
\end{definition}

We can identify such 2-shifted prequantisations of $(\rmB G, \omega)$ as certain known structures for gerbes, which have not yet been connected with the theory of shifted geometric quantisation:

\begin{theorem}
Let $G$ be a compact, simple, simply connected Lie group with 2-shifted symplectic form $\omega$ as in Example~\ref{eg:BG as 2-shifted symplectic}.
Then, a 2-shifted prequantisation of $(\rmB G, \omega)$ is precisely the same as a multiplicative bundle gerbe as defined and shown to exist in~\cite{Waldorf:Multiplicative_Gerbes}.
In particular, $(\rmB G, \omega)$ admits a 2-shifted prequantisation by~\cite[Ex.~1.5]{Waldorf:Multiplicative_Gerbes}.
\end{theorem}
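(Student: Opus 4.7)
The plan is to show that, once the face maps of $\rmB G$ are written out explicitly, the definition of a 2-shifted prequantisation of $(\rmB G,\omega)$ and the definition of a multiplicative bundle gerbe with connection coincide term-by-term; the existence statement then follows immediately from \cite{Waldorf:Multiplicative_Gerbes}.

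First, I would unpack the data. Recall from Example~\ref{eg:M//G as spl Mfd}--\ref{eg:BG as spl Mfd} that $d_0,d_1,d_2 \colon G^2 \to G$ are given by $d_0(g_1,g_2)=g_2$, $d_1(g_1,g_2)=g_1 g_2=m(g_1,g_2)$, $d_2(g_1,g_2)=g_1$. Hence $\CG \in \Grb^\nabla(\rmB G_1) = \Grb^\nabla(G)$ and the isomorphism $\CE \colon d_2^*\CG \otimes d_0^*\CG \to d_1^*\CG$ is exactly an isomorphism $\pr_1^*\CG \otimes \pr_2^*\CG \arisom m^*\CG$ in $\Grb^\nabla(G^2)$, which is the defining piece of a multiplicative structure. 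Similarly, writing out the four face maps $d_i \colon G^3 \to G^2$, one checks that $d_1^*\CE \circ d_3^*\CE$ and $d_2^*\CE \circ d_0^*\CE$ are precisely the two ways of composing $\CE$ corresponding to the two bracketings of the triple product $g_1 g_2 g_3$; so the parallel 2-isomorphism $\psi$ is the associator 2-cell of Waldorf's multiplicative structure, and the coherence over $\rmB G_4 = G^4$ coming from the simplicial identities reproduces Waldorf's pentagon axiom on $G^4$.

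Second, I would check that the connection-theoretic conditions match. The requirement $\curv(\CG)=\omega_3=\frac{1}{6}\<-,[-,-]\>$ is the usual curvature constraint for the basic/multiplicative gerbe on $G$. For the 2-form part, recall from Section~\ref{sec:operations on BGrbs} that since $\CE$ is an isomorphism its underlying bundle $E$ has rank one, so $\curv(\CE) \in \Omega^2(G^2;\iu\RN)$ is an ordinary 2-form; by construction it equals $\curv(E) + (B_{d_1^*\CG} - B_{d_2^*\CG} - B_{d_0^*\CG})$, i.e.\ the ``coboundary defect'' of the curving along $m,\pr_1,\pr_2$. Demanding that this equal $\omega_2 = \frac{1}{2}\<d_2^*\mu_G,d_0^*\overline{\mu}_G\>$ is exactly Waldorf's normalisation condition selecting the multiplicative \emph{connection} (rather than merely a multiplicative gerbe). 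The closedness relations $\rmD \omega = 0$ for the closed 2-form of degree two, namely $\dd\omega_2 = \delta\omega_3$ on $G^2$ and $\delta\omega_2 = 0$ on $G^3$, are then automatic consequences of the existence of $\CE$ and $\psi$ together with the classification in Proposition~\ref{st:par up to connection shift}.

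Finally, having established the dictionary, the second sentence of the theorem is just the observation that \cite[Ex.~1.5]{Waldorf:Multiplicative_Gerbes} provides such a multiplicative gerbe with connection on $G$, whose curvature and curving data realise exactly $(\omega_3,\omega_2)$. The main obstacle I expect is purely bookkeeping: verifying that signs, factors of $\tfrac12$, and orientation conventions in~\eqref{eq:omega_2 on BG} and in the definition of $\curv(\CE)$ match Waldorf's conventions, and checking that the equivalence respects parallel 2-isomorphisms on both sides (so that the correspondence descends to an equivalence of groupoids, not merely a bijection of objects). No new geometric input is required beyond the 2-categorical machinery of Section~\ref{sec:Gerbes}.
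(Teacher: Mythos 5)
Your unpacking is correct and is precisely the verification the paper leaves implicit: the theorem is asserted as a definitional identification with no written proof, and your computation of the face maps of $\rmB G$, the identification of $\psi$ with the associator and of the $G^4$-coherence with the pentagon axiom, and the matching of $\curv(\CG)=\omega_3$ and $\curv(\CE)=\omega_2$ with Waldorf's multiplicative-connection conditions is exactly the intended dictionary. One minor logical point: the relations $\dd\omega_2 = \delta\omega_3$ and $\delta\omega_2 = 0$ are properties of $\omega$ already established in Example~\ref{eg:closed (2,2)-form on BG} (hypotheses of the theorem), not consequences to be derived from the existence of $\CE$ and $\psi$ --- though it is a useful consistency check that the curvature identity $\dd\,\curv(\CE) = \curv(d_1^*\CG) - \curv(d_2^*\CG \otimes d_0^*\CG)$ reproduces them.
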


\begin{appendix}

\def\theequation{\thesection.\arabic{equation}}    
\def\theproposition{\thesection.\arabic{proposition}}    
\def\thedefinition{\thesection.\arabic{definition}}    
\def\thelemma{\thesection.\arabic{lemma}}    
\def\theremark{\thesection.\arabic{remark}}

\section{A glance at 2-categories}
\label{app:2Cats}

We give a very brief overview of basic notions of \emph{2-categories}, or \emph{bicategories}.
(We warn the reader that we use these terms interchangeably here, which is not standard; bicategories are often understood to be the more general concept, where 2-categories are \emph{strict} bicategories.)
We attempt in no way to be complete here; we refer readers interested in full definitions and more background to~\cite{Leinster:Basic_BiCats} for a concise introduction, and to~\cite{SP--Thesis} for a detailed and comprehensive account of 2-categories, including symmetric monoidal 2-categories.

\begin{definition}
A \emph{2-category $\scC$} consists of
\begin{myitemize}
\item a collection of objects, for which we write $x \in \scC$,

\item for each pair of objects $x,y \in \scC$ a \emph{morphism category} $\scC(x,y) = \Hom_\scC(x,y)$, whose objects are called \emph{(1-)morphisms} $f \colon x \to y$, and whose morphisms $\psi \colon f \to g$ are called \emph{2-morphisms} (the composition in $\Hom_\scC(x,y)$ is called \emph{vertical composition}, and we denote it by $(-) \bullet (-)$),

\item for each $x,z,y \in \scC$ a \emph{composition functor} $(-) \circ (-) \colon \Hom_\scC(y,z) \times \Hom_\scC(x,y) \to \Hom_\scC(x,z)$, 

\item for each $x \in \scC$ a specified \emph{identity morphism} $1_x \in \Hom_\scC(x,x)$, and

\item natural isomorphisms
\begin{align}
	\alpha_{f,g,h} \colon (h \circ g) \circ f
	&\arisom h \circ (g \circ f)\,,
	\\
	\lambda_g \colon 1_y \circ g &\arisom g\,,
	\\
	\rho_g \colon g \circ 1_x &\arisom g\,,
\end{align}
for all morphisms $h \colon y \to z$, $g \colon x \to y$, and $f \colon w \to x$ in $\scC$.
The natural isomorphisms $\alpha$, $\rho$, and $\lambda$ are called the \emph{associator} and \emph{left} and \emph{right unitor}, respectively.
\end{myitemize}
These data have to satisfy the \emph{pentagon} and \emph{triangle axioms}, which are, respectively, the commutativity of the following diagrams:
\begin{equation}
\begin{tikzcd}[column sep=-0.5cm, row sep=1cm]
	& &
	\big( (k \circ h) \circ g \big) \circ f \ar[dll, "\alpha_{g,h,k} \circ 1_f"'] \ar[drr, "\alpha_{f,g,k \circ h}"]
	& &
	\\
	\big( k \circ (h \circ g) \big) \circ f \ar[dr, "\alpha_{f, h \circ g, k}"']
	& & & &
	(k \circ h) \circ (g \circ f) \ar[dl, "\alpha_{f \circ g, h, k}"]
	\\
	& k \circ \big( (h \circ g) \circ f \big) \ar[rr, "1_k \circ \alpha_{f, g, h}"']
	& &
	k \circ \big( h \circ (g \circ f) \big)
\end{tikzcd}
\end{equation}
\begin{equation}
\begin{tikzcd}
	(g \circ 1_x) \circ f \ar[rr, "\alpha_{f, 1_x, g}"] \ar[dr, "\rho_g \circ 1_f"']
	& & g \circ (1_x \circ f) \ar[dl, "1_g \circ \lambda_f"]
	\\
	& g \circ f &
\end{tikzcd}
\end{equation}
\end{definition}

Note that because of the functoriality of the composition the \emph{interchange law}
\begin{equation}
	(\psi' \bullet \psi) \circ (\varphi' \bullet \varphi)
	= (\psi' \circ \varphi') \bullet (\psi \circ \varphi)
\end{equation}
holds true for any collection of 2-morphisms for which either side is defined.

\begin{definition}
\label{def:invertible mp in 2Cat}
A morphism $f \colon x \to y$ in a 2-category $\scC$ is called \emph{invertible} if there exists a morphism $g \colon y \to x$ and 2-isomorphisms $1_x \to g \circ f$ and $f \circ g \to 1_y$.
\end{definition}

\begin{example}
The collection of categories naturally assembles into a 2-category $2\Cat$:
its objects are the categories, and for categories $\sfC, \sfD$, the morphism category $\Hom_{2\Cat}(\sfC, \sfD)$ is the category of functors $F \colon \sfC \to \sfD$, with natural transformations $\eta \colon F \to G$ as morphisms.
In this case, the associator and unitors happen to be identity morphisms; one says that $2\Cat$ is a \emph{strict 2-category}.
\qen
\end{example}

\begin{example}
The collection of gerbes (resp.~gerbes with connection) on a manifold $M$ form a 2-category $\Grb(M)$ (resp.~$\Grb^\nabla(M)$); see Section~\ref{sec:Grbs and twVBuns}.
Here, composition of morphisms relies on forming pullbacks of vector bundles and surjective submersions.
This operation is not \emph{strictly} compatible with composition; for two smooth maps $g \colon M'' \to M'$ and $f \colon M' \to M$, and a vector bundle $E \to M$, the pullback bundles $g^*f^*E$ and $(f \circ g)^*E$ are not equal, but there exists a \emph{canonical} isomorphism between them, natural in $E$.
These isomorphisms induce the associator in $\Grb(M)$ (resp.~$\Grb^\nabla(M)$).
\qen
\end{example}

One can also define \emph{monoidal 2-categories}, which are 2-categories $\scC$ endowed with the additional data of a tensor product 2-functor $\otimes \colon \scC \times \scC \to \scC$, together with various 1- and 2-isomorphisms which establish its associativity and unitality.
Further, there is a hierarchy of different levels of commutativity for such products;
each of these levels corresponds to further choices of isomorphisms and coherence conditions.
Writing out these data and conditions requires considerable work; a full treatment can be found in~\cite[Appendix~C]{SP--Thesis}.
For the symmetric monoidal 2-categories $(\Grb(M), \otimes)$ and $(\Grb^\nabla(M), \otimes)$, the tensor product is constructed from pullbacks of submersions and bundles, as well as the tensor product of vector bundles (see Section~\ref{sec:operations on BGrbs}).
Therefore, all additional coherence data arise as the standard \emph{canonical} isomorphisms which relate different ways of pulling back the same geometric structures and which establish the associativity of the tensor product of vector bundles.

\end{appendix}

\begin{small}

\makeatletter

\interlinepenalty=10000

\makeatother

\bibliographystyle{alphaurl}
\addcontentsline{toc}{section}{References}
\bibliography{Gerbes_Survey_Bib}

\vspace{0.5cm}

\noindent
(Severin Bunk)
Universität Hamburg, Fachbereich Mathematik, Bereich Algebra und Zahlentheorie,
\\
Bundesstraße 55, 20146 Hamburg, Germany
\\
severin.bunk@uni-hamburg.de
\end{small}

\end{document}